\begin{document}

\mathchardef\mhyphen="2D

\newtheorem{The}{Theorem}[section]

\newtheorem{Lem}[The]{Lemma}

\newtheorem{Prop}[The]{Proposition}

\newtheorem{Cor}[The]{Corollary}

\newtheorem{Rem}[The]{Remark}

\newtheorem{Obs}[The]{Observation}

\newtheorem{SConj}[The]{Standard Conjecture}

\newtheorem{Titre}[The]{\!\!\!\! }

\newtheorem{Conj}[The]{Conjecture}

\newtheorem{Question}[The]{Question}

\newtheorem{Prob}[The]{Problem}

\newtheorem{Def}[The]{Definition}

\newtheorem{Not}[The]{Notation}

\newtheorem{Claim}[The]{Claim}

\newtheorem{Conc}[The]{Conclusion}

\newtheorem{Ex}[The]{Example}

\newtheorem{Fact}[The]{Fact}

\newtheorem{Formula}[The]{Formula}

\newtheorem{Formulae}[The]{Formulae}

\newtheorem{The-Def}[The]{Theorem and Definition}

\newtheorem{Prop-Def}[The]{Proposition and Definition}

\newtheorem{Lem-Def}[The]{Lemma and Definition}

\newtheorem{Cor-Def}[The]{Corollary and Definition}

\newtheorem{Conc-Def}[The]{Conclusion and Definition}

\newtheorem{Terminology}[The]{Note on terminology}

\newcommand{\C}{\mathbb{C}}

\newcommand{\R}{\mathbb{R}}

\newcommand{\N}{\mathbb{N}}

\newcommand{\Z}{\mathbb{Z}}

\newcommand{\Q}{\mathbb{Q}}

\newcommand{\Proj}{\mathbb{P}}

\newcommand{\Rc}{\mathcal{R}}

\newcommand{\Oc}{\mathcal{O}}

\newcommand{\Vc}{\mathcal{V}}

\newcommand{\Id}{\operatorname{Id}}

\newcommand{\pr}{\operatorname{pr}}

\newcommand{\rk}{\operatorname{rk}}

\newcommand{\del}{\partial}

\newcommand{\delbar}{\bar{\partial}}

\newcommand{\Cdot}{{\raisebox{-0.7ex}[0pt][0pt]{\scalebox{2.0}{$\cdot$}}}}

\newcommand\nilm{\Gamma\backslash G}

\newcommand\frg{{\mathfrak g}}

\newcommand{\fg}{\mathfrak g}

\newcommand{\Oh}{\mathcal{O}}

\newcommand{\Kur}{\operatorname{Kur}}

\newcommand\gc{\frg_\mathbb{C}}

\newcommand\slawek[1]{{\textcolor{red}{#1}}}

\newcommand\dan[1]{{\textcolor{blue}{#1}}}

\newcommand\question[1]{{\textcolor{green}{#1}}}

\def\pm{P_m(X,\omega,\chi)}

\def\fmv{F_m[\chi+i\partial\bar{\partial}v]}

\def\fmp{F_m[\chi+i\partial\bar{\partial}\varphi]}
\def\pa{\partial}

\begin{center}

  {\Large\bf $m$-Pseudo-effectivity and a Monge-Amp\`ere-Type Equation for Forms of Positive Degree}

\end{center}

\begin{center}

{\large S\l{}awomir Dinew and Dan Popovici}

\end{center}

\vspace{1ex}

\noindent{\small{\bf Abstract.} Given an $n$-dimensional compact K\"ahler manifold, we continue our study of $m$-positivity in two ways. We first propose generalisations of the notions of pseudo-effective and big Bott-Chern cohomology classes of bidegree $(1,\,1)$ by relaxing the standard positivity hypotheses to their $m$-counterparts after we have proved a Lamari-type duality lemma in bidegree $(m,\,m)$. Independently, we propose a Monge-Amp\`ere-type non-linear pde whose distinctive feature is that its solutions, if any, are forms of positive degree rather than functions. We prove a form of uniqueness for the solutions and, under the assumption that a solution exists, we give a geometric application involving the $m$-bigness notion introduced in the first part.}

\vspace{1ex}

\section{Introduction}\label{section:introd} Let $(X,\,\omega)$ be a complex Hermitian manifold with $\mbox{dim}_\C X = n$ and let $m\in\{1,\dots , n\}$. In our previous work [DP25], we studied the following variant of the notion of $m$-psh function that had been introduced by Dieu in [Die06] and subsequently studied by several authors, including Harvey and Lawson in [HL13], Verbitsky in [Ver10], Dinew in [Din22] (see references therein):

\begin{Def}\label{Def:m-semi-pos} $(1)$\, A real current $T$ of bidegree $(1,\,1)$ on $X$ is said to be {\bf $m$-semi-positive} (resp. {\bf $m$-positive}) {\bf with respect to $\omega$} if the bidegree-$(m,\,m)$-current $T\wedge\omega^{m-1}$ is {\bf strongly semi-positive (resp. strongly positive)} on $X$. In this case, we write $T\geq_{m,\,\omega}0$ (resp. $T>_{m,\,\omega}0$).

  \vspace{1ex}

$(2)$\, A holomorphic line bundle $L\longrightarrow X$ is said to be {\bf $m$-semi-positive with a $C^\infty$} (resp. {\bf singular}) {\bf metric} if there exist a $C^\infty$ Hermitian metric $\omega$ on $X$ and a $C^\infty$ (resp. singular) Hermitian fibre metric $h$ on $L$ such that the curvature form (resp. current) of the Chern connection of $(L,\,h)$ has the property: \begin{eqnarray*} i\Theta_h(L)\geq_{m,\,\omega}0.\end{eqnarray*}

\end{Def}

In $\S$\ref{section:m-psef} of the present work, building on a generalisation of Lamari's duality lemma to the arbitrary bidegree $(m,\,m)$ and its complementary bidegree $(n-m,\,n-m)$ that we give as Lemma \ref{Lem:duality_generalised-Lamari}, we generalise the notions of {\it pseudo-effective} and {\it big} cohomology classes introduced in [Dem92, Definition 1.3] and in [BDPP13] as follows:

\begin{Def}\label{Def:m-psef_introd} Let $X$ be a compact complex manifold with $\mbox{dim}_\C X = n$ and let $m\in\{1,\dots , n\}$.

\vspace{1ex}

$(1)$\, For every real current $T$ of bidegree $(1,\,1)$ such that $dT=0$ and every K\"ahler metric (if any) $\omega$ on $X$, the {\bf $m^{th}$ $[\omega]$-twisted class} of $T$ is the Bott-Chern class: \begin{eqnarray}\label{eqn:m-th-twisted_class}[T]_{BC}\wedge[\omega_{m-1}]_{BC} = [T\wedge\omega_{m-1}]_{BC}\in H^{m,\,m}_{BC}(X,\,\R).\end{eqnarray}

The Bott-Chern class $[T]_{BC}\in H^{1,\,1}_{BC}(X,\,\R)$ is said to be:

\vspace{1ex}

$\bullet$ $[\omega]$-$m$-{\bf pseudo-effective} ($[\omega]$-$m$-{\bf psef}), a fact denoted by $[T]_{BC}\wedge[\omega_{m-1}]_{BC}\geq 0$ or by $[T\wedge\omega_{m-1}]_{BC}\geq 0$, if there exists a real current $S$ of bidegree $(m-1,\,m-1)$ on $X$ such that \begin{eqnarray}\label{eqn:T_m-psef-class}T\wedge\omega_{m-1} + i\partial\bar\partial S\geq 0 \hspace{3ex} \mbox{(strongly)}\end{eqnarray} as an $(m,\,m)$-current on $X$;

\vspace{1ex}

$\bullet$ $[\omega]$-$m$-{\bf big}, a fact denoted by $[T]_{BC}\wedge[\omega_{m-1}]_{BC}> 0$ or by $[T\wedge\omega_{m-1}]_{BC}> 0$, if there exists a real current $S$ of bidegree $(m-1,\,m-1)$ on $X$ such that \begin{eqnarray*}T\wedge\omega_{m-1} + i\partial\bar\partial S\geq \varepsilon\,\gamma^m \hspace{1ex} \mbox{(strongly)}\end{eqnarray*} as an $(m,\,m)$-current on $X$, for some constant $\varepsilon>0$ and some $(1,\,1)$-form $\gamma>0$.
\vspace{1ex}

$(2)$\, Suppose that $X$ is K\"ahler. For every K\"ahler class $[\omega]_{BC}\in H^{1,\,1}_{BC}(X,\,\R)$,

\vspace{1ex}

$\bullet$ the following subset is called the {\bf $[\omega]$-$m$-pseudo-effective} ({\bf $[\omega]$-$m$-psef}) {\bf cone} of $X$ : \begin{eqnarray}\label{eqn:omega-m_psef-cone}{\cal E}_{[\omega],\,m}(X):=\bigg\{[T]_{BC}\in H^{1,\,1}_{BC}(X,\,\R)\,\mid\,[T]_{BC}\wedge[\omega_{m-1}]_{BC}\geq 0\bigg\}\subset H^{1,\,1}_{BC}(X,\,\R);\end{eqnarray}

\vspace{1ex}

$\bullet$ the following subset is called the {\bf $[\omega]$-$m$-big cone} of $X$ : \begin{eqnarray}\label{eqn:omega-m_big-cone}{\cal B}_{[\omega],\,m}(X):=\bigg\{[T]_{BC}\in H^{1,\,1}_{BC}(X,\,\R)\,\mid\,[T]_{BC}\wedge[\omega_{m-1}]_{BC}> 0\bigg\}\subset H^{1,\,1}_{BC}(X,\,\R).\end{eqnarray}

\end{Def}

\vspace{2ex}

We stress that part $(1)$ of the above definition requires the bidegree-$(m,\,m)$ cohomology class $[T\wedge\omega_{m-1}]_{BC}\in H^{m,\,m}_{BC}(X,\,\R)$ to contain a current with a certain strong positivity property even if this condition defines a property of the bidegree-$(1,\,1)$ cohomology class $[T]_{BC}\in H^{1,\,1}_{BC}(X,\,\R)$. This condition in bidegree $(m,\,m)$ was revealed to us by the duality Lemma \ref{Lem:duality_generalised-Lamari}. On the other hand, when it exists, the form $\gamma>0$ in the definition of $[\omega]$-$m$-bigness can be chosen to equal $\omega$, by compactness of $X$, after a possible adjustment of $\varepsilon>0$. We chose to keep it arbitrary so as not to give the impression that it depends on the $[\omega]$ that is part and parcel of this definition.

After proving various properties of these objects, we get the following result characterising one of these notions.

\begin{The}\label{The:m-pos_equiv_introd} Let $(X,\omega)$ be a compact K\"ahler manifold with $\mbox{dim}_\C X = n$. Fix $m\in\{1,\dots , n\}$.

  \vspace{1ex}

Then, for every cohomology class $\mathfrak{c}\in H^{1,\,1}_{BC}(X,\,\R)$, the following two statements are equivalent:

  \vspace{1ex}

  (a) There exist a $C^\infty$ $d$-closed real $(1,\,1)$-form $\alpha\in\mathfrak{c}$ and a form $\Omega_0\in C^\infty_{n-m,\,n-m}(X,\,\R)$ with the properties: \begin{eqnarray}\label{eqn:m-pos_equiv_Omega-prop}\Omega_0>0 \hspace{2ex} (\mbox{weakly}) \hspace{3ex}\mbox{and}\hspace{3ex} \partial\bar\partial\Omega_0 = 0\end{eqnarray} such that $\alpha\wedge\omega^{m-1}\wedge\Omega_0 >0$ everywhere on $X$;

 \vspace{1ex}

 (b) The cohomology class $-\mathfrak{c}\in H^{1,\,1}_{BC}(X,\,\R)$ is {\bf not $[\omega]$-$m$-pseudo-effective}.

\end{The}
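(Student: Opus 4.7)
The plan is to handle $(a)\Rightarrow(b)$ by a short pairing argument and $(b)\Rightarrow(a)$ in two steps: first invoke the generalised Lamari duality of Lemma \ref{Lem:duality_generalised-Lamari} to obtain integral positivity, then upgrade it to strict pointwise positivity by solving a linear elliptic equation.

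For $(a)\Rightarrow(b)$ I would argue by contradiction. If $-\mathfrak{c}$ were $[\omega]$-$m$-pseudo-effective, there would exist a real $(m-1,m-1)$-current $S$ with $-\alpha\wedge\omega^{m-1}+i\partial\bar\partial S \geq 0$ strongly on $X$. Wedging with the weakly positive form $\Omega_0$ produces a non-negative Radon measure; integrating and using Stokes together with $\partial\bar\partial\Omega_0 = 0$ to kill the $i\partial\bar\partial S\wedge\Omega_0$ contribution yields $\int_X\alpha\wedge\omega^{m-1}\wedge\Omega_0 \leq 0$, contradicting the pointwise positivity asserted in (a).

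For $(b)\Rightarrow(a)$, Lemma \ref{Lem:duality_generalised-Lamari} directly produces a smooth weakly positive $\partial\bar\partial$-closed $(n-m,n-m)$-form $\Omega_0$ for which $\int_X\alpha_0\wedge\omega^{m-1}\wedge\Omega_0 > 0$ holds for any smooth representative $\alpha_0\in\mathfrak{c}$. The key algebraic observation is that $\eta := \omega^{m-1}\wedge\Omega_0$ is strongly positive (weak and strong positivity coincide in bidegree $(n-1,n-1)$) and $\partial\bar\partial$-closed, so by Michelsohn $\eta = \tilde\omega^{n-1}/(n-1)!$ for a unique Hermitian metric $\tilde\omega$, which is Gauduchon. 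I would then consider the scalar second-order elliptic operator $L: C^\infty(X,\R)\to C^\infty_{n,n}(X,\R)$, $L(f) := i\partial\bar\partial f\wedge\eta$. Integration by parts together with $\partial\bar\partial\eta=0$ shows $\int_X L(f) = 0$ for every $f$, and the formal $L^2$-adjoint reads $L^\#(g) = i\partial\bar\partial(g\eta)$. The strong maximum principle for the Gauduchon Laplacian of $\tilde\omega$ gives $\ker L = \R$, while the same identity $\partial\bar\partial\eta = 0$ puts the constants in $\ker L^\#$; since scalar elliptic operators on closed manifolds have Fredholm index zero, this forces $\ker L^\# = \R$. Consequently $L$ surjects onto the codimension-one subspace of $(n,n)$-forms of vanishing total integral.

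To finish, choose $c := \int_X\alpha_0\wedge\eta \,\big/\int_X(\omega^n/n!) > 0$; the form $-\alpha_0\wedge\eta + c\,\omega^n/n!$ then has zero total integral and hence equals $L(f)$ for some $f\in C^\infty(X,\R)$. The pair $\alpha := \alpha_0 + i\partial\bar\partial f\in\mathfrak{c}$ and $\Omega_0$ satisfies $\alpha\wedge\omega^{m-1}\wedge\Omega_0 = c\,\omega^n/n! > 0$ pointwise on $X$, proving (a). The main obstacle is precisely this analytic upgrade: one must leverage the Gauduchon property of $\tilde\omega$ — a direct consequence of $\partial\bar\partial\Omega_0 = 0$ — to identify $\ker L^\#$ as exactly the constants, since without this identification the integral positivity supplied by Lamari duality could not be converted into strict pointwise positivity.
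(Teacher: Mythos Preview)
Your proposal is correct and follows essentially the same route as the paper: the implication $(a)\Rightarrow(b)$ is the easy direction of the duality Lemma~\ref{Lem:duality_generalised-Lamari}, and for $(b)\Rightarrow(a)$ both you and the paper invoke that lemma to obtain an $\Omega_0$ with $\int_X\alpha_0\wedge\omega^{m-1}\wedge\Omega_0>0$ and then solve the linear elliptic equation $i\partial\bar\partial f\wedge\omega^{m-1}\wedge\Omega_0 = -\alpha_0\wedge\omega^{m-1}\wedge\Omega_0 + c\,dV_\omega$ to upgrade integral positivity to pointwise positivity. The only difference is packaging: the paper works directly with the scalar operator $P_{\omega,\Omega_0}(\varphi)=-\,i\partial\bar\partial\varphi\wedge\omega^{m-1}\wedge\Omega_0/dV_\omega$, computes its adjoint explicitly (Lemma~\ref{Lem:P-operator_adjoint}) and applies the maximum principle to $P^\star$ to get the decomposition $C^\infty(X,\R)=\R\oplus\mathrm{Im}\,P_{\omega,\Omega_0}$, whereas you recognise $\omega^{m-1}\wedge\Omega_0$ as $\tilde\omega^{n-1}/(n-1)!$ for a Gauduchon metric via Michelsohn and reach the same conclusion through a Fredholm-index-zero argument.
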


\vspace{2ex}

In $\S$\ref{section:equation}, we propose a new non-linear partial differential equation of Monge-Amp\`ere type with a peculiarity.

As far as we are aware, the solutions, if any, of all the Monge-Amp\`ere-type equations (all of which are highly non-linear PDEs) in the literature are (smooth or otherwise) {\it functions}. For geometric and analytic reasons, some of which will be spelt out further down the road, we will make a case for this type of PDEs whose solutions are differential forms of bidegree $(r,\,r)$ with $r$ allowed to be $>0$.

Specifically, in keeping with the set-up of this paper and building on the experience of [Pop15] where a Monge-Amp\`ere-type pde with function solutions was introduced through a condition on $(n-1,\,n-1)$-forms, we propose the following Monge-Amp\`ere-type equation for positive-degreed form solutions:

\begin{Def}\label{Def:equation} Let $X$ be a compact complex manifold with $\mbox{dim}_\C X = n$ and let $m\in\{1,\dots , n\}$. Suppose there exists a {\bf K\"ahler} metric $\omega$ on $X$.

  Given a volume form $dV\in C^\infty_{n,\,n}(X,\,\R)$ with $dV>0$ and a form $\alpha\in C^\infty_{m,\,m}(X,\,\R)$ such that $d\alpha = 0$ and $\alpha>0$ strongly on $X$, we consider the equation: \begin{eqnarray}\label{eqn:equation}\bigg[\star_\omega\bigg((\alpha + i\partial\bar\partial u)\wedge\omega_{n-m-1}\bigg)\bigg]^n = dV,\end{eqnarray} whose solutions $u\in C^\infty_{m-1,\,m-1}(X,\,\R)$, if any, are subject to the initial conditions: \begin{eqnarray}\label{eqn:equation_initial-conditions}\alpha + i\partial\bar\partial u > 0 \hspace{1ex}\mbox{(strongly)} \hspace{5ex}  \mbox{and} \hspace{5ex} u\in\ker\partial^\star_\omega\cap\ker\bar\partial^\star_\omega.\end{eqnarray}

\end{Def}

While we hope to be able to take up the problem of the existence of a solution to this equation in future work, our arguments in this present paper lead to a proof (cf. Theorem \ref{The:equation_uniqueness}) of the uniqueness, up to a constant multiple of $\omega^{m-1}$, of the solution of equation (\ref{eqn:equation}) whose primitive components of positive degree in the Lefschetz decomposition (\ref{eqn:Lefschetz-decomp}) have been prescribed. This prescription can be thought of as a kind of ``boundary'' conditions.

Actually, our method gives the uniqueness, up to an additive constant, of the primitive component of degree $0$ of the solution in the more general setting (cf. Theorem \ref{The:equation_uniqueness_introd} just below) where only the primitive component of bidegree $(1,\,1)$ of the solution has been prescribed.

\begin{The}\label{The:equation_uniqueness_introd} The set-up is the one described in Definition \ref{Def:equation}. If there exist two solutions $u_1,\,u_2\in C^\infty_{m-1,\,m-1}(X,\,\R)$ of equation (\ref{eqn:equation}) satisfying the initial conditions (\ref{eqn:equation_initial-conditions}) such that their $\omega$-primitive components $(u_1)_{prim}^{(m-2)}$ and $(u_2)_{prim}^{(m-2)}$ of bidegree $(1,\,1)$ in their Lefschetz decompositions (\ref{eqn:Lefschetz-decomp}) are prescribed arbitrarily by a same $(1,\,1)$-form, then there exists a constant $C\in\R$ such that \begin{eqnarray*}(u_1)_{prim}^{(m-1)} = (u_2)_{prim}^{(m-1)} + C.\end{eqnarray*}

\end{The}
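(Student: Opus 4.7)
The plan is to take the difference of the two equations for $u_1,u_2$ and exploit the standard factorization. Setting $A_j := \star_\omega\bigl((\alpha + i\partial\bar\partial u_j)\wedge\omega_{n-m-1}\bigr)$, each $A_j$ is a strictly positive $(1,1)$-form: it is semi-positive as the Hodge-star of a strongly positive $(n-1,n-1)$-form, and the equation $A_j^n = dV > 0$ forces strict positivity. Hence $T := \sum_{k=0}^{n-1} A_1^k\wedge A_2^{n-1-k}$ is a strictly positive $(n-1,n-1)$-form, and from $A_1^n - A_2^n = (A_1 - A_2)\wedge T$ together with the hypothesis $A_1^n = A_2^n = dV$ we get, for $v := u_1-u_2$, the pointwise identity
\[
\star_\omega\bigl(i\partial\bar\partial v\wedge\omega_{n-m-1}\bigr)\wedge T \;\equiv\; 0 \qquad\text{on } X.
\]

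To isolate the function component $f := (v)_{prim}^{(m-1)}$ of $v$, I would use the Lefschetz decomposition of $v$ together with the K\"ahler facts $[L,i\partial\bar\partial] = 0$ and the top-of-chain vanishing $L^{r}\eta^{prim}=0$ for $r$ exceeding the primitivity bound. The hypothesis $(v)_{prim}^{(m-2)} = 0$, combined with these vanishings, reduces $v\wedge\omega_{n-m-1}$ to contributions from only $f$ and the primitive $(2,2)$-component $(v)_{prim}^{(m-3)}$. Applying the K\"ahler identity $\Lambda_\omega\circ\star_\omega = \star_\omega\circ L_\omega$ together with $\gamma\wedge\omega_{n-1} = (\Lambda_\omega\gamma)\,\omega_n$ for $(1,1)$-forms $\gamma$ then yields the clean formula
\[
\Lambda_\omega(A_1 - A_2) \;=\; \frac{(n-1)!}{(n-m-1)!}\,\Lambda_\omega\bigl(i\partial\bar\partial f\bigr),
\]
so that the \emph{trace} of the difference form depends only on $f$, independent of the remaining primitive components of $v$. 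The gauge condition $\partial^*_\omega u_j = \bar\partial^*_\omega u_j = 0$ enters through the K\"ahler identities $[\Lambda,\bar\partial] = -i\partial^*$ and $[\Lambda,\partial] = i\bar\partial^*$, which force $\Lambda$ to commute with $i\partial\bar\partial$ on $v$ and thus keep the above reduction self-consistent.

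The core of the argument is then an integration by parts. Multiplying the pointwise identity above by $f$, rewriting the product via the duality $(\star_\omega\zeta)\wedge T = \pm\,\zeta\wedge(\star_\omega T)$ for forms of matching bidegree, and using $i\partial\bar\partial v\wedge\omega_{n-m-1} = i\partial\bar\partial(v\wedge\omega_{n-m-1})$ (valid since $\omega$ is closed), I would integrate by parts on the compact manifold $X$ to transfer all $\partial\bar\partial$-derivatives onto the test form $f\cdot\star_\omega T$. The aim is an identity of the schematic shape
\[
0 \;=\; \int_X \rho\cdot i\partial f\wedge\bar\partial f\wedge\omega_{n-1} \;+\;\bigl(\text{cross terms involving } (v)_{prim}^{(m-3)}\bigr),
\]
with $\rho>0$ a strictly positive weight built from $A_1$ and $A_2$. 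The cross terms should cancel after a further integration by parts exploiting the $d$-closedness of the $(n-1,n-1)$-form $\pm\,\star_\omega A_j = (\alpha + i\partial\bar\partial u_j)\wedge\omega_{n-m-1}$ (which follows from $d\alpha=0$ and $d(i\partial\bar\partial u_j)=0$), equivalently the co-closedness $d^*_\omega A_j = 0$, together with the primitivity of $(v)_{prim}^{(m-3)}$. Once the identity has been reduced to $\int_X \rho\cdot|\partial f|^2_\omega\, dV_\omega = 0$, strict positivity of $\rho$ forces $\partial f \equiv 0$, hence $f$ is constant on the compact connected manifold $X$.

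The main obstacle, compared to the classical Calabi-Yau uniqueness argument, is precisely this handling of the cross terms: there, the analogue of $T$ is assembled from $d$-closed forms and is itself closed, so the cross terms vanish for free; here $T$ is in general \emph{not} closed, and the subtle cancellation between the $f$- and $(v)_{prim}^{(m-3)}$-contributions must be forced by combining the co-closedness of each $A_j$ with the precise Lefschetz-theoretic structure of the operator $\eta\mapsto\star_\omega(\eta\wedge\omega_{n-m-1})$ acting on $(m,m)$-forms.
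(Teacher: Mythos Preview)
Your set-up through the factorisation $(A_1-A_2)\wedge T=0$ with $T=\sum_k A_1^k\wedge A_2^{n-1-k}>0$ is fine and matches the paper. The gap is in what comes next.

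First, a conceptual error: the $(2,2)$-component $(v)_{prim}^{(m-3)}$ does \emph{not} enter $A_1-A_2$ at all. For any $(m,m)$-form $\zeta$ one has
\[
\star_\omega(\zeta\wedge\omega_{n-m-1})=-\frac{(n-2)!}{(n-m-1)!}\,\zeta_{prim}^{(m-1)}+\frac{(n-1)!}{(n-m-1)!}\,\zeta_{prim}^{(m)}\,\omega,
\]
so only the top two primitive pieces of $\zeta=i\partial\bar\partial v$ survive. Using the K\"ahler identity $\Lambda_\omega^l(i\partial\bar\partial v)=i\partial\bar\partial\Lambda_\omega^l v-l\,\Delta''_\omega\Lambda_\omega^{l-1}v$ (valid under your gauge $v\in\ker\partial^\star_\omega\cap\ker\bar\partial^\star_\omega$), these two pieces are determined by $\Lambda_\omega^{m-1}v$ and $\Lambda_\omega^{m-2}v$, hence by $(v)_{prim}^{(m-1)}$ and $(v)_{prim}^{(m-2)}$ only. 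With $(v)_{prim}^{(m-2)}=0$ one gets the exact formula
\[
A_1-A_2 \;=\; \frac{1}{(m-1)!}\Bigl(-\,i\partial\bar\partial g \;+\; \frac{m-n-1}{n}\,(\Delta''_\omega g)\,\omega\Bigr),\qquad g:=\Lambda_\omega^{m-1}v,
\]
a second-order expression in the scalar $g$ alone. So the ``cross terms'' you devote the last paragraph to simply are not there; the obstacle you identify is illusory.

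Second, even with this correct formula in hand, your integration-by-parts route is not completed. Because $T$ is not closed and $A_1-A_2$ contains the extra term $(\Delta''_\omega g)\,\omega$, multiplying by $f$ and integrating does not reduce to a clean $\int\rho\,|\partial f|^2\,dV_\omega$; the boundary-free Stokes argument produces first-order residues in $g$ with coefficients built from $dT$, and your appeal to $d^\star_\omega A_j=0$ does not kill them. You never actually carry out this step, and I do not see how it closes.

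The paper bypasses this entirely. Writing $T=\rho^{n-1}$ for a Hermitian metric $\rho>0$, the identity $(A_1-A_2)\wedge T=0$ becomes $\Lambda_\rho(A_1-A_2)=0$. Applying $\Lambda_\rho$ to the displayed formula above and using the Hermitian commutation relation $[\Lambda_\rho,\bar\partial]=-i(\partial^\star_\rho+\tau^\star_\rho)$ on the function $g$ yields
\[
\Bigl(\Delta''_\rho+\tfrac{m-n-1}{n}\,(\Lambda_\rho\omega)\,\Delta''_\omega+\bar\tau^\star_\rho\bar\partial\Bigr)\,g \;=\; 0,
\]
a second-order elliptic operator on functions with no zero$^{th}$-order term. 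The maximum principle on the compact $X$ then forces $g$ (equivalently $f$) to be constant. No integration by parts, no closedness of $T$, is needed.
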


The preliminaries to the proof of this result include various formulae (cf. Lemmas \ref{Lem:Laplacians_link} and \ref{Lem:star_product_zeta_omega-powers}, Corollary and Definition \ref{Cor-Def:P_omega_def-formula}) involving Laplacians and the Hodge star operator that we prove using (K\"ahler and Hermitian) commutation relations, primitive forms and the Lefschetz decomposition.

\vspace{2ex}

In $\S$\ref{section:application_PDE}, we give an application to the notions introduced in $\S$\ref{section:m-psef} and to the equation introduced in $\S$\ref{section:equation}. Specifically, supposing that equation (\ref{eqn:equation}) admits a solution satisfying certain initial conditions (see Definition \ref{Def:existence_equation_initial-cond}), we give a higher-degree analogue of the main result of [Pop16] that solved at the time on compact K\"ahler (and slightly more general) manifolds the qualitative part of the version for differences of two nef classes of Demailly's {\it Transcendental Morse Inequalities Conjecture} as formulated, for example, in [BDPP13, (ii) of Conjecture 10.1].

To state this result, we consider the following pointwise linear map induced by an arbitrary Hermitian metric $\omega$ on an $n$-dimensional complex manifold $X$ and by a given $m\in\{1,\dots , n-1\}$: \begin{eqnarray*}\Lambda^{m,\,m}T^\star X\ni\alpha\longmapsto \alpha_\omega:=\star_\omega\bigg(\alpha\wedge\omega_{n-m-1}\bigg)\in \Lambda^{1,\,1}T^\star X,\end{eqnarray*} where $\star_\omega$ is the Hodge star operator associated with $\omega$.

Our set-up is the following:

\vspace{1ex}

$\bullet$ $(X,\,\omega)$ is a compact {\bf K\"ahler} manifold with $\mbox{dim}_\C X = n$;

$\bullet$ $m$ is a given integer lying in $\{1,\dots , n\}$;

$\bullet$ $\alpha, \beta\in C^\infty_{m,\,m}(X,\,\R)$ are differential forms supposed to exist such that:

\vspace{1ex}

(i)\, $d\alpha = d\beta = 0$; (ii)\, $\alpha>0$ (strongly) and $\beta\geq C\,\omega_m$ (strongly) for some constant $C>0$;

(iii)\, $\Delta''_\omega\alpha_\omega =0$, where $\Delta''_\omega:=\bar\partial\bar\partial^\star + \bar\partial^\star\bar\partial$ is the $\bar\partial$-Laplacian induced by $\omega$.

\vspace{1ex}

We obtain the following result (see Theorem \ref{The:current_existence} for a slightly more general statement):

\begin{The}\label{The:current_existence_introd} Suppose that equation (\ref{eqn:equation}) is {\bf solvable} in the sense of Definition \ref{Def:existence_equation_initial-cond}. Then, in the set-up described above, if the forms $\alpha$ and $\beta$ satisfy the condition: \begin{eqnarray*}\displaystyle \frac{1}{(n-m)!}\,\int\limits_X(\alpha_\omega)^{n-m}\wedge\beta < \frac{1}{n!}\,\int\limits_X(\alpha_\omega)^n,\end{eqnarray*} there exists a $d$-closed real current $T$ of bidegree $(m,\,m)$ on $X$ such that: \begin{eqnarray*}(i)\, T\geq \delta\,(\alpha_\omega)_m \hspace{3ex}\mbox{for some constant}\hspace{1ex} \delta>0; \hspace{2ex}   (ii)\, T\in\bigg[(\alpha_\omega)_m - \beta\bigg]_{BC},\end{eqnarray*} where $[\,\cdot\,]_{BC}$ stands for the Bott-Chern cohomology class of the specified form.

\end{The}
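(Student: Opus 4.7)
The plan is to adapt the Monge--Amp\`ere strategy of [Pop16] to the bidegree-$(m,m)$ setting, with the solvability of equation (\ref{eqn:equation}) as the main analytic input. First I will show that under the standing hypotheses $\tilde{\omega} := \alpha_\omega$ is itself a K\"ahler form on $X$: the strong positivity of $\alpha$ combined with that of $\omega_{n-m-1}$ makes $\alpha \wedge \omega_{n-m-1}$ a strongly positive $(n-1,n-1)$-form, hence $\alpha_\omega = \star_\omega(\alpha \wedge \omega_{n-m-1})$ is a positive $(1,1)$-form; hypothesis $(iii)$ combined with the K\"ahler identity $\Delta''_\omega = \frac{1}{2}\Delta_\omega$ yields $d\alpha_\omega = 0$. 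The hypothesis of the theorem then reads $\int_X \tilde{\omega}_{n-m} \wedge \beta < \int_X \tilde{\omega}_n$, and the goal becomes: produce a $d$-closed current $T \in [\tilde{\omega}_m - \beta]_{BC}$ with $T \geq \delta\,\tilde{\omega}_m$ for some $\delta > 0$.

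Next, I select a volume form $dV$ depending on $\beta$ and $\tilde{\omega}$, normalised so that $\int_X dV = \int_X \tilde{\omega}^n$ and pointwise encoding the desired relation between $\beta$ and $\tilde{\omega}$; the strict integral inequality in the hypothesis supplies the slack that makes such a choice admissible. Invoking the solvability of equation (\ref{eqn:equation}) yields a solution $u \in C^\infty_{m-1,m-1}$ with $\hat{\alpha} := \alpha + i\partial\bar{\partial} u$ closed and strongly positive as an $(m,m)$-form, and the associated $(1,1)$-form $\hat{\omega} := \star_\omega(\hat{\alpha}\wedge\omega_{n-m-1})$ satisfying $\hat{\omega}^n = dV$. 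Combining the identities of Lemmas \ref{Lem:Laplacians_link} and \ref{Lem:star_product_zeta_omega-powers} with Corollary and Definition \ref{Cor-Def:P_omega_def-formula}, and using the initial condition $u \in \ker\partial^\star_\omega \cap \ker\bar{\partial}^\star_\omega$, the difference $\hat{\omega} - \tilde{\omega} = \star_\omega(i\partial\bar{\partial} u \wedge \omega_{n-m-1})$ can be rewritten as $i\partial\bar{\partial} v$ for a function $v$ produced from $u$ via the operator $P_\omega$. Therefore $\hat{\omega}$ is a K\"ahler form cohomologous to $\tilde{\omega}$, so $\hat{\omega}_m$ is $d$-closed and lies in $[(\alpha_\omega)_m]_{BC}$; setting $T := \hat{\omega}_m - \beta$ gives a $d$-closed current in the Bott--Chern class $[(\alpha_\omega)_m - \beta]_{BC}$.

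The remaining content is the pointwise inequality $\hat{\omega}_m \geq \beta + \delta\,\tilde{\omega}_m$ as $(m,m)$-forms. I would establish this by diagonalising $\hat{\omega}(x)$ and $\tilde{\omega}(x)$ simultaneously at each $x \in X$ and comparing the $m$-th elementary symmetric polynomials in the eigenvalues of $\hat{\omega}(x)$ with those determined by the pair $(\tilde{\omega}(x), \beta(x))$: the identity $\hat{\omega}^n = dV$ fixes the product of eigenvalues of $\hat{\omega}(x)$, and an AM--GM or Maclaurin-type estimate combined with the strict slack in the hypothesis should deliver the bound at the $m$-th level with $\delta > 0$.

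I expect the main obstacle to be precisely this descent from the top-degree equality $\hat{\omega}^n = dV$ to the pointwise $(m,m)$-form inequality. In [Pop16] the analogous comparison lives entirely at bidegree $(1,1)$ and is essentially tautological, whereas here one must mediate between bidegree $(1,1)$ and bidegree $(m,m)$ through the algebraic map $\alpha\mapsto\alpha_\omega$ and the Lefschetz-related identities of $\S$\ref{section:equation}. The pointwise linear algebra, which will likely use the bound $\beta \geq C\omega_m$ essentially, is the technical core. A secondary complication is regularity: if $u$ is not sufficiently smooth, the identities hold only distributionally, and the positive current $T$ must be recovered from a regularisation/limiting procedure.
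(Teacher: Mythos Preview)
Your plan diverges from the paper's proof at the decisive step, and the divergence is a genuine gap rather than an alternative route. The paper does \emph{not} solve equation (\ref{eqn:equation}) once for a cleverly chosen volume form and then extract a pointwise inequality $\hat\omega_m\geq\beta+\delta\,\tilde\omega_m$. Instead it invokes the duality Lemma \ref{Lem:duality_generalised-Lamari} to reduce the existence of $T$ to an \emph{integral} inequality $(1-\delta)\int_X(\alpha_\omega)_m\wedge\Omega\geq\int_X\beta\wedge\Omega$ valid for every weakly positive $\partial\bar\partial$-closed test form $\Omega$, argues by contradiction with a sequence $\Omega_k$, and for \emph{each} $k$ solves (\ref{eqn:equation}) with the volume form $c_k\,\beta\wedge\Omega_k$ depending on the test form. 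The resulting $(\widetilde\alpha_k)_\omega$ is then fed into the pointwise inequality of Lemma \ref{Lem:pointwise_ineq_form-products} combined with Cauchy--Schwarz, and after integrating and using $(\widetilde\alpha_k)_\omega\in[\alpha_\omega]_{BC}$ one contradicts hypothesis (iii). The same mechanism (duality, a sequence of equations indexed by test forms, and a pointwise product inequality) already drives [Pop16]; the comparison there is not ``essentially tautological''.

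The obstacle you flagged is in fact fatal to your scheme. Prescribing $\hat\omega^n=dV$ fixes only the product $\lambda_1\cdots\lambda_n$ of the eigenvalues of $\hat\omega$ at each point; an AM--GM or Maclaurin inequality then bounds the \emph{scalar} $\sigma_m(\lambda)$ from below, but it cannot bound the $(m,m)$-form $\hat\omega_m$ from below strongly, because the individual products $\lambda_J=\prod_{j\in J}\lambda_j$ (the ``eigenvalues'' of $\hat\omega_m$) can be made arbitrarily small while $\lambda_1\cdots\lambda_n$ stays fixed. Since $\beta$ is a general strongly positive $(m,m)$-form (not $\gamma^m$ for some $(1,1)$-form $\gamma$), there is no simultaneous diagonalisation that would reduce the comparison to scalars. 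This is precisely why the paper trades the pointwise form inequality for an integral one via Lamari duality and lets the volume form absorb both $\beta$ and the test form $\Omega_k$: Lemma \ref{Lem:pointwise_ineq_form-products} then supplies the right pointwise product estimate, and integration closes the argument.
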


\vspace{2ex}

This result connects with the notions introduced in $\S$\ref{section:m-psef} through the following

\begin{Cor}\label{Cor:n-1_current_existence_big_introd} When $m=n-1$ in the setting of Theorem \ref{The:current_existence_introd}, the inverse image of the class $[(\star_\omega\alpha)_{n-1} - \beta]_{BC}\in H^{n-1,\,n-1}_{BC}(X,\,\R)$ under the Hard Lefschetz isomorphism \begin{eqnarray*}H^{1,\,1}_{BC}(X,\,\R)\ni[a]_{BC}\longmapsto[\omega_{n-2}\wedge a]_{BC}\in H^{n-1,\,n-1}_{BC}(X,\,\R)\end{eqnarray*} is an $[\omega]$-$(n-1)$-{\bf big} class.

\end{Cor}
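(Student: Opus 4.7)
The plan is to take the $(n-1,n-1)$-current $T$ produced by Theorem \ref{The:current_existence_introd} and interpret the data it carries as a direct witness of the $[\omega]$-$(n-1)$-bigness of the Hard Lefschetz preimage class.

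First, I specialise Theorem \ref{The:current_existence_introd} to $m=n-1$. Since $\omega_{n-m-1}=\omega_{0}=1$, we have $\alpha_{\omega}=\star_{\omega}\alpha\in C^{\infty}_{1,1}(X,\mathbb{R})$, and the theorem produces a $d$-closed real $(n-1,n-1)$-current $T$ with $T\geq\delta\,(\star_{\omega}\alpha)_{n-1}$ strongly and $T\in[(\star_{\omega}\alpha)_{n-1}-\beta]_{BC}$. Second, the Hard Lefschetz isomorphism gives a unique class $[a]_{BC}\in H^{1,1}_{BC}(X,\mathbb{R})$ with $[\omega_{n-2}\wedge a]_{BC}=[(\star_{\omega}\alpha)_{n-1}-\beta]_{BC}$. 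Fixing a smooth representative $a$, the fact that the current $T$ lies in this same Bott--Chern class furnishes a real $(n-2,n-2)$-current $S$ with
$$T=\omega_{n-2}\wedge a+i\partial\bar\partial S.$$

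The main (and essentially only non-routine) point is to upgrade the inequality $T\geq\delta\,(\star_{\omega}\alpha)_{n-1}$ to one of the shape $T\geq\varepsilon\,\omega^{n-1}$ strongly. For this I argue that $\star_{\omega}\alpha$ is pointwise strictly positive as a $(1,1)$-form: since $\alpha>0$ strongly as an $(n-1,n-1)$-form and the Hodge star $\star_{\omega}$ maps the strong-positivity cone in bidegree $(n-1,n-1)$ isomorphically onto the cone of Hermitian-positive $(1,1)$-forms (as one checks on simple elements $i\xi_{1}\wedge\bar\xi_{1}\wedge\cdots\wedge i\xi_{n-1}\wedge\bar\xi_{n-1}$ in a unitary frame for $\omega$), the form $\star_{\omega}\alpha$ is strictly positive at every point. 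Compactness of $X$ then yields a constant $c>0$ with $\star_{\omega}\alpha\geq c\,\omega$ pointwise, whence $(\star_{\omega}\alpha)_{n-1}\geq c^{n-1}\,\omega_{n-1}$ strongly.

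Putting everything together,
$$\omega_{n-2}\wedge a+i\partial\bar\partial S \;=\; T \;\geq\; \delta\,(\star_{\omega}\alpha)_{n-1} \;\geq\; \varepsilon\,\omega^{n-1} \quad\text{strongly,}$$
with $\varepsilon:=\delta\,c^{n-1}/(n-1)!>0$. By Definition \ref{Def:m-psef_introd}(1), applied with $m=n-1$ and $\gamma=\omega$, this is precisely the statement that $[a]_{BC}$ is $[\omega]$-$(n-1)$-big, as required.
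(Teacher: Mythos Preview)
Your proof is correct and follows essentially the same route as the paper. Both arguments take the current $T$ from Theorem~\ref{The:current_existence_introd}, write it as $\omega_{n-2}\wedge(\text{preimage})+i\partial\bar\partial(\text{potential})$, and read off $[\omega]$-$(n-1)$-bigness from $T\geq\delta\,(\star_\omega\alpha)_{n-1}$ together with the pointwise positivity of $\star_\omega\alpha$. The only cosmetic difference is that the paper explicitly constructs the Hard Lefschetz preimage by passing through the $\Delta''_\omega$-harmonic representative (and then invokes the $\partial\bar\partial$-lemma), whereas you simply take the preimage class for granted from the isomorphism in the statement and pick any smooth representative; your version is slightly more economical but the substance is the same.
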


\vspace{2ex}

This corollary is the direct counterpart in the case $m=n-1$ of the main result obtained in [Pop16] for the classical case where $m=1$.

\vspace{2ex}

\noindent {\bf Acknowledgments}. The first-named author was partially supported by grant no. 2021/41/B/ST1/01632 from the National Science Center, Poland.

\vspace{2ex}

\section{$m$-pseudo-effectivity}\label{section:m-psef} In this section, we generalise the classical notions of pseudo-effective (psef) and big cohomology classes (and, more particularly, holomorphic line bundles) by relaxing standard positivity notions for forms and currents to their $m$-(semi-)positive counterparts.

\subsection{A duality phenomenon}\label{subsection:duality} The starting point is the following generalisation to arbitrary bidegrees $(m,\,m)$ of Lamari's duality lemma [Lam99, Lemme 3.3] for bidegree $(1,\,1)$.

\begin{Lem}\label{Lem:duality_generalised-Lamari} Let $X$ be a compact complex manifold with $\mbox{dim}_\C X = n$, let $m\in\{1,\dots , n\}$ and let $\theta\in C_{m,\,m}^\infty(X,\,\R)$. The following statements are equivalent.

\vspace{1ex}

(i)\, There exists a real current $S$ of bidegree $(m-1,\,m-1)$ on $X$ such that the $(m,\,m)$-current $\theta + i\partial\bar\partial S$ is strongly semi-positive (a fact denoted by $\theta + i\partial\bar\partial S\geq 0$ (strongly)) on $X$.

\vspace{1ex}

(ii)\, $\int\limits_X\theta\wedge\Omega \geq 0$ for every $\Omega\in C_{n-m,\,n-m}^\infty(X,\,\R)$ such that $\partial\bar\partial\Omega = 0$ and $\Omega>0$ (weakly) on $X$.

\end{Lem}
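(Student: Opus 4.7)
My plan is to adapt Lamari's proof [Lam99, Lemme 3.3] from bidegree $(1,1)$ to $(m,m)$ via the Hahn--Banach theorem on the space of real $(m,m)$-currents.

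The direction (i) $\Rightarrow$ (ii) is immediate. Given $S$ with $\theta+i\partial\bar\partial S\geq 0$ strongly and $\Omega$ as in (ii), Stokes' theorem combined with $\partial\bar\partial\Omega=0$ yields
\[
\int_X\theta\wedge\Omega \;=\; \int_X(\theta+i\partial\bar\partial S)\wedge\Omega \;-\; \int_X S\wedge i\partial\bar\partial\Omega \;=\; \int_X(\theta+i\partial\bar\partial S)\wedge\Omega \;\geq\; 0,
\]
the last inequality being the standard fact that a strongly semi-positive $(m,m)$-current paired with a weakly positive $(n-m,n-m)$-form is a non-negative measure.

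For (ii) $\Rightarrow$ (i), I would argue contrapositively. Let $V:=\mathcal{D}'_{m,m}(X,\R)$ carry the weak-$*$ topology, and consider the convex cone
\[
K \;:=\; \bigl\{\eta + i\partial\bar\partial S \,:\, \eta\in V \text{ strongly semi-positive},\ S\in\mathcal{D}'_{m-1,m-1}(X,\R)\bigr\},
\]
so that (i) amounts to $\theta\in K$. If $\theta\notin K$, a Hahn--Banach separation produces a smooth real $(n-m,n-m)$-form $\Omega$ with $\int_X T\wedge\Omega\geq 0$ for every $T\in K$ and $\int_X\theta\wedge\Omega<0$. Specialising to $T=\eta\geq 0$ (strongly) forces $\Omega\geq 0$ weakly; specialising to $T=\pm i\partial\bar\partial S$ for arbitrary $S$ and integrating by parts forces $\partial\bar\partial\Omega=0$. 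To contradict (ii), whose test forms are required to be \emph{strictly} weakly positive, I would perturb: fix a K\"ahler metric $\omega_0$ on $X$ (available in the setting of the paper). Since $\omega_0$ is $d$-closed and $\omega_0^{n-m}$ is strongly positive, the form $\Omega_\varepsilon := \Omega + \varepsilon\,\omega_0^{n-m}$ is smooth, strictly weakly positive, $\partial\bar\partial$-closed, and for $\varepsilon>0$ small still satisfies $\int_X\theta\wedge\Omega_\varepsilon<0$, contradicting (ii).

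The main obstacle I anticipate is the closedness of $K$ in $V$ for the weak-$*$ topology, needed for the strict form of Hahn--Banach to apply. Since $K$ is the sum of the (closed) cone of strongly semi-positive $(m,m)$-currents and the linear subspace $i\partial\bar\partial(\mathcal{D}'_{m-1,m-1}(X,\R))$, closedness is not automatic. I would verify it along the lines of Lamari's original treatment: pass to the quotient by the image of $i\partial\bar\partial$, where the finite-dimensionality of the relevant Bott--Chern-type space and the weak compactness of normalised positive currents allow one to check that limits of elements of $K$ remain in $K$. A secondary point worth flagging is that the strict-positivity upgrade at the end relies on the existence of a $d$-closed strictly positive $(n-m,n-m)$-form, which is where a K\"ahler assumption is implicitly invoked.
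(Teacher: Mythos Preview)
Your direction (i)$\Rightarrow$(ii) matches the paper's. For (ii)$\Rightarrow$(i), however, you take the dual route to the paper (and to Lamari's actual argument): the paper applies Hahn--Banach on the \emph{form} side, in $C^\infty_{n-m,n-m}(X,\R)$, separating the open convex set $U$ of weakly strictly positive forms from the subspace $F=\ker(\partial\bar\partial)\cap\{\int_X\theta\wedge\cdot=0\}$. Because $U$ is open, no closedness hypothesis whatsoever is needed; the separating functional is then automatically a strongly semi-positive $(m,m)$-current $\widetilde T$ vanishing on $F$, and a scalar adjustment gives $\theta-\lambda\widetilde T\in\mathrm{Im}(\partial\bar\partial)$.

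Your current-side approach hinges entirely on the closedness of $K$, and here there is a gap. Contrary to your recollection, Lamari does \emph{not} prove closedness of such a cone; his argument is precisely the form-side one just described. Your proposed fix via ``finite-dimensionality of the relevant Bott--Chern-type space'' does not apply as stated: the quotient $\mathcal{D}'_{m,m}(X,\R)/\mathrm{Im}(i\partial\bar\partial)$ is dual to the infinite-dimensional space $\ker(\partial\bar\partial)\subset C^\infty_{n-m,n-m}(X,\R)$ (the finite-dimensional Bott--Chern group is $\ker d/\mathrm{Im}\,\partial\bar\partial$, and nothing here is $d$-closed). Closedness of $K$ \emph{can} be rescued by a mass-bound argument provided a $\partial\bar\partial$-closed strictly positive $(n-m,n-m)$-form exists---pair the positive summand $\eta_j$ against it to bound the masses uniformly, extract a weak limit, and use that $\mathrm{Im}(i\partial\bar\partial)$ is closed---but this, like your perturbation $\Omega_\varepsilon=\Omega+\varepsilon\,\omega_0^{n-m}$, imports a K\"ahler-type hypothesis that the lemma as stated does not carry. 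The paper's form-side argument sidesteps the closedness question entirely, which is what makes it the cleaner choice here.
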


\noindent {\it Proof.} $(i)\implies(ii)$ This implication is immediate since $(\theta + i\partial\bar\partial S)\wedge\Omega$ is a semi-positive $(n,\,n)$-current on $X$ (as the product of a strongly semi-positive current by a weakly strictly positive $C^\infty$ form), hence we have the inequality below:  \begin{eqnarray*}0\leq\int\limits_X(\theta + i\partial\bar\partial S)\wedge\Omega = \int\limits_X\theta\wedge\Omega,\end{eqnarray*} where the equality follows from the Stokes theorem and the hypothesis $\partial\bar\partial\Omega = 0$.

\vspace{1ex}

$(ii)\implies(i)$ We consider the open convex subset: \begin{eqnarray*}U:=\bigg\{\Omega\in C_{n-m,\,n-m}^\infty(X,\,\R) \,\mid\, \Omega>0 \hspace{1.5ex} \mbox{(weakly)}\bigg\}\subset C_{n-m,\,n-m}^\infty(X,\,\R)\end{eqnarray*} and the $\R$-linear map: \begin{eqnarray}\label{eqn:linear-map}\int\limits_X\theta\wedge\cdot: C_{n-m,\,n-m}^\infty(X,\,\R)\cap\ker(\partial\bar\partial)\longrightarrow\R,   \hspace{5ex} \Omega\longmapsto\int\limits_X\theta\wedge\Omega.\end{eqnarray} Thanks to the hypothesis (ii), the restriction of this map to $U\cap\ker(\partial\bar\partial)$ is non-negative: \begin{eqnarray*}\bigg(\int\limits_X\theta\wedge\cdot\,\bigg)_{|U\cap\ker(\partial\bar\partial)}\geq 0.\end{eqnarray*}

Therefore, there are two possible cases.

\vspace{1ex}

$\bullet$ {\it Case $1$.} This is the case where there exists $\Omega_0\in U\cap\ker(\partial\bar\partial)$ such that $\int\limits_X\theta\wedge\Omega_0 = 0$.

\vspace{1ex}

In this case, we get $\int\limits_X\theta\wedge\Omega = 0$ for all $\Omega\in C_{n-m,\,n-m}^\infty(X,\,\R)\cap\ker(\partial\bar\partial)$ (i.e. the map (\ref{eqn:linear-map}) vanishes identically). This is equivalent to having $\theta\in\mbox{Im}\,(\partial\bar\partial)$, hence to the existence of a real $(m-1,\,m-1)$-form $S$ on $X$ such that $\theta + i\partial\bar\partial S = 0$. Thus, (i) holds in this case.

To prove the claim that the map (\ref{eqn:linear-map}) vanishes identically in this case, we fix $\Omega\in C_{n-m,\,n-m}^\infty(X,\,\R)\cap\ker(\partial\bar\partial)$ with $\Omega\neq\Omega_0$ and we consider the $\R$-line \begin{eqnarray*}\Omega_t:=(1-t)\,\Omega_0 + t\,\Omega\in C_{n-m,\,n-m}^\infty(X,\,\R)\cap\ker(\partial\bar\partial),   \hspace{5ex} t\in\R,\end{eqnarray*} and the affine function \begin{eqnarray*}f:\R\longrightarrow\R, \hspace{5ex} f(t):=\int\limits_X\theta\wedge\Omega_t.\end{eqnarray*} Thus, there exist constants $a,b\in\R$ such that $f(t) = at+b$ for all $t\in\R$. Our hypothesis of {\it Case $1$} means that $f(0)=0$, hence $b=0$ and $f(t) = at$ for all $t\in\R$.

Now, $U\cap\ker(\partial\bar\partial)$ is open in $C_{n-m,\,n-m}^\infty(X,\,\R)\cap\ker(\partial\bar\partial)$ and $\Omega_0\in U\cap\ker(\partial\bar\partial)$. Therefore, $\Omega_t\in U\cap\ker(\partial\bar\partial)$ for all $t\in\R$ sufficiently close to $0$. This means that $\Omega_t > 0$ (weakly) and $\partial\bar\partial\Omega_t = 0$ for all $t\in\R$ sufficiently close to $0$. Hypothesis (ii) implies that \begin{eqnarray*}at = f(t)= \int\limits_X\theta\wedge\Omega_t\geq 0   \hspace{5ex} \mbox{for all}\hspace{1ex} t\in\R \hspace{1ex}\mbox{sufficiently close to}\hspace{1ex} 0.\end{eqnarray*} This implies that $a=0$ (which amounts to $f\equiv 0$, which implies that $f(1)=0$, as claimed) since otherwise $f(t)$ would change signs when $t$ switches from being negative to being positive.

\vspace{1ex}

$\bullet$ {\it Case $2$.} This is the case where $(\int\limits_X\theta\wedge\cdot\,)_{|U\cap\ker(\partial\bar\partial)}> 0$, namely the case where $\int\limits_X\theta\wedge\Omega > 0$ for all $\Omega\in C_{n-m,\,n-m}^\infty(X,\,\R)$ such that $\Omega>0$ (weakly) and $\partial\bar\partial\Omega = 0$.

\vspace{1ex}

We consider the vector subspace of codimension $1$ ($=$ the kernel of the linear map (\ref{eqn:linear-map})): \begin{eqnarray*}F:=\bigg\{\Omega\in C_{n-m,\,n-m}^\infty(X,\,\R)\,\mid\, \partial\bar\partial\Omega = 0 \hspace{1ex}\mbox{and}\hspace{1ex} \int\limits_X\theta\wedge\Omega = 0\bigg\}\subset C_{n-m,\,n-m}^\infty(X,\,\R)\cap\ker(\partial\bar\partial).\end{eqnarray*}

Our hypothesis of {\it Case $2$} ensures that $U\cap F = \emptyset$. This property, together with $U$ being a non-empty open convex subset and $F$ being a vector subspace of the locally convex space $C_{n-m,\,n-m}^\infty(X,\,\R)$, ensures, thanks to the Hahn-Banach separation theorem, the existence of continuous linear functional
\begin{eqnarray*}\widetilde{T}: C_{n-m,\,n-m}^\infty(X,\,\R)\longrightarrow\R
\end{eqnarray*} (i.e. of a real current $\widetilde{T}$ of bidegree $(m,\,m)$ on $X$) that separates $U$ and $F$ in the sense that:
\begin{eqnarray*}\widetilde{T}_{|U} > 0  \hspace{5ex}\mbox{and}\hspace{5ex} \widetilde{T}_{|F}\equiv 0.\end{eqnarray*}
Note that the property $\widetilde{T}_{|U} > 0$ means that the current $\widetilde{T}$ is strongly semi-positive (i.e. $\widetilde{T}\geq 0$ strongly) on $X$ thanks to the duality between weakly strictly positive $C^\infty$ forms and strongly semi-positive currents of the complementary bidegree. In particular, $\widetilde{T}$ is an order-zero current and hence, for any form $\Omega\in C_{n-m,\,n-m}^\infty(X,\,\R)$, we can write $T(\Omega)=\int_X\widetilde{T}\wedge\Omega $.

  Let $\Omega_1\in U\cap\ker(\partial\bar\partial)$. Then, $\Omega_1\notin F$ and $\int_X\theta\wedge\Omega_1 >0$ and $\int_X\widetilde{T}\wedge\Omega_1 >0$. Thus, there exists a constant $\lambda\in(0,\,\infty)$ such that \begin{eqnarray*}\int_X\theta\wedge\Omega_1 = \lambda\,\int_X\widetilde{T}\wedge\Omega_1,\end{eqnarray*} a fact that amounts to \begin{eqnarray*}\bigg(\int\limits_X(\theta - \lambda\,\widetilde{T})\wedge\cdot\,\bigg)_{|\R\,\Omega_1}\equiv 0.\end{eqnarray*}

  On the other hand, by construction, we also have: \begin{eqnarray*}\bigg(\int\limits_X(\theta - \lambda\,\widetilde{T})\wedge\cdot\,\bigg)_{|F}\equiv 0.\end{eqnarray*}

  Since $C_{n-m,\,n-m}^\infty(X,\,\R)\cap\ker(\partial\bar\partial) = \R\,\Omega_1\oplus F$, we infer that \begin{eqnarray*}\bigg(\int\limits_X(\theta - \lambda\,\widetilde{T})\wedge\cdot\,\bigg)_{|C_{n-m,\,n-m}^\infty(X,\,\R)\cap\ker(\partial\bar\partial)}\equiv 0.\end{eqnarray*} This is equivalent to having $\theta - \lambda\,\widetilde{T}\in\mbox{Im}\,(\partial\bar\partial)$, hence to the existence of a real current $S$ of bidegree $(m-1,\,m-1)$ on $X$ such that $\Theta - \lambda\,\widetilde{T} = -i\partial\bar\partial S$.

  Summing up, we have got: \begin{eqnarray*}\Theta + i\partial\bar\partial S = \lambda\,\widetilde{T}\geq 0  \hspace{2ex} \mbox{(strongly)}.\end{eqnarray*} This proves (i).  \hfill $\Box$

\vspace{3ex}

  The duality Lemma \ref{Lem:duality_generalised-Lamari} prompts the introduction of the cones described below, the second one being a generalisation of the Gauduchon cone introduced in [Pop15].

  \begin{Prop-Def}\label{Prop-Def:F_m-G_m_cones} Let $X$ be a compact complex manifold with $\mbox{dim}_\C X = n$ and let $m\in\{1,\dots , n\}$.

\vspace{1ex}

$(1)$\, The subset: \begin{eqnarray*}{\cal F}_m(X) & := & \bigg\{[T]_{BC}\in H^{m,\,m}_{BC}(X,\,\R)\,\mid\,T\geq 0 \hspace{2ex}\mbox{strongly as an $(m,\,m)$-current on $X$} \\
  & &  \hspace{40ex}  \mbox{and}\hspace{2ex} dT=0\bigg\}\subset H^{m,\,m}_{BC}(X,\,\R) \end{eqnarray*} is a convex cone. If, moreover, there exists a Hermitian metric $\gamma$ on $X$ such that $\partial\bar\partial\gamma^{n-m} = 0$, this cone is {\bf closed}.

\vspace{1ex}

$(2)$\, The subset: \begin{eqnarray*}{\cal G}_m(X) & := & \bigg\{[\Omega]_A\in H^{n-m,\,n-m}_A(X,\,\R)\,\mid\,\Omega\in C^\infty_{n-m,\,n-m}(X,\,\R) \hspace{2ex}\mbox{such that}\hspace{2ex}   \Omega> 0 \hspace{2ex}\mbox{(weakly)} \\
  & &  \hspace{40ex} \mbox{and}\hspace{2ex} \partial\bar\partial\Omega=0\bigg\}\subset H^{n-m,\,n-m}_A(X,\,\R)\end{eqnarray*} is an {\bf open} convex cone.

\vspace{1ex}

$(3)$\, If there exists a Hermitian metric $\gamma$ on $X$ such that $\partial\bar\partial\gamma^{n-m} = 0$, the cone ${\cal F}_m(X)\subset H^{m,\,m}_{BC}(X,\,\R)$ and the closure $\overline{{\cal G}_m(X)}\subset H^{n-m,\,n-m}_A(X,\,\R)$ of the cone ${\cal G}_m(X)$ are {\bf dual} to each other under the duality between the Bott-Chern cohomology of bidegree $(m,\,m)$ and the Aeppli cohomology of the complementary bidegree $(n-m,\,n-m)$.

\end{Prop-Def}

  \noindent {\it Proof.} $(1)$-$(2)$ We will only prove the closedness of ${\cal F}_m(X)$ under the stated assumption, the other statements being left to the reader. Let  \begin{eqnarray*}{\cal F}_m(X)\ni[T_j]_{BC}\underset{j\to\infty}{\longrightarrow}\mathfrak{c}\in H^{m,\,m}_{BC}(X,\,\R)\simeq\bigg(H^{n-m,\,n-m}_A(X,\,\R)\bigg)^\star
  \end{eqnarray*} be a convergent sequence of cohomology classes represented by $(m,\,m)$-currents $T_j$ such that \begin{eqnarray*}T _j\geq 0 \hspace{2ex}\mbox{strongly on}\hspace{1ex} X \hspace{5ex}\mbox{and}\hspace{5ex} dT_j=0\end{eqnarray*} for all $j$. We have to show that $\mathfrak{c}\in{\cal F}_m(X)$.

      From the hypotheses, we get: \begin{eqnarray*}0\leq\int\limits_X T_j\wedge\gamma^{n-m} = [T_j]_{BC}.[\gamma^{n-m}]_A\underset{j\to\infty}{\longrightarrow}\mathfrak{c}.[\gamma^{n-m}]_A.\end{eqnarray*} The above integral constitutes the $\gamma^{n-m}$-mass of the strongly semi-positive $(m,\,m)$-current $T_j$, so the convergence implies that the sequence $(T_j)_j$ of strongly semi-positive $(m,\,m)$-currents is uniformly bounded in mass. Therefore, there exists a weakly convergent subsequence $T_{j_\nu}\underset{\nu\to\infty}{\longrightarrow}T$ whose limiting $(m,\,m)$-current is necessarily strongly semi-positive and satisfies $dT=0$.

      We deduce the convergence $[T_{j_\nu}]_{BC}\underset{\nu\to\infty}{\longrightarrow}[T]_{BC}$, hence $\mathfrak{c} = [T]_{BC}\in{\cal F}_m(X)$.

\vspace{1ex}

$(3)$\, This is a rewording of the duality Lemma \ref{Lem:duality_generalised-Lamari} under the specified assumptions when the form $\theta$ therein is supposed $d$-closed.      \hfill $\Box$

\subsection{$m$-positivity notions for cohomology classes}\label{subsection:m-positivity-notions}
  
 The duality Lemma \ref{Lem:duality_generalised-Lamari} and Definition \ref{Def:m-semi-pos} of $m$-semi-positivity prompt between them the introduction of the notions described in $(1)$ of Definition \ref{Def:m-psef_introd}. Their integral-class versions are described in the following

\begin{Def}\label{Def:m-psef} Let $L\longrightarrow X$ be a holomorphic line bundle over a compact complex manifold with $\mbox{dim}_\C X = n$. Let $m\in\{1,\dots , n\}$.

    \vspace{1ex}

    $(1)$\, We say that $L$ is {\bf $m$-pseudo-effective} ({\bf $m$-psef} for short), respectively {\bf $m$-big}, if there exist:

    \vspace{1ex}

    $\bullet$ a (possibly singular) Hermitian fibre metric $h$ on $L$;

    $\bullet$ a Hermitian metric $\omega$ on $X$;

    $\bullet$ a real current $S$ of bidegree $(m-1,\,m-1)$ on $X$;

    \vspace{1ex}

    \noindent such that \begin{eqnarray*}\frac{i}{2\pi}\Theta_h(L)\wedge\omega_{m-1} + i\partial\bar\partial S\geq 0 \hspace{3ex} \mbox{(strongly)},\end{eqnarray*} as an $(m,\,m)$-current on $X$, respectively \begin{eqnarray*}\frac{i}{2\pi}\Theta_h(L)\wedge\omega_{m-1} + i\partial\bar\partial S\geq \varepsilon\,\gamma^m \hspace{1ex} \mbox{(strongly)}\end{eqnarray*} as an $(m,\,m)$-current on $X$, for some constant $\varepsilon>0$ and some $(1,\,1)$-form $\gamma>0$.

\vspace{1ex}

If a K\"ahler class $[\omega]_{BC}\in H^{1,\,1}_{BC}(X,\,\R)$ has been fixed, $L$ is said to be {\bf $[\omega]$-$m$-pseudo-effective} ({\bf $[\omega]$-$m$-psef}), respectively {\bf $[\omega]$-$m$-big}, if there exist $h$ and $S$ as above satisfying the first, respectively second, strong semi-positivity property for some (hence any) choice of K\"ahler metric $\omega$ in the given K\"ahler class $[\omega]_{BC}$.

\vspace{1ex}

$(2)$\, For every K\"ahler metric (if any) $\omega$ on $X$, the {\bf $m^{th}$ $[\omega]$-twisted Chern class} of $L$ is the Bott-Chern class: \begin{eqnarray}\label{eqn:m-th-twisted_BC-class}c_1(L)_{BC}\wedge[\omega_{m-1}]_{BC} = \bigg[\frac{i}{2\pi}\Theta_h(L)\wedge\omega_{m-1}\bigg]_{BC}\in H^{m,\,m}_{BC}(X,\,\R),\end{eqnarray} where $c_1(L)_{BC} = \bigg[\frac{i}{2\pi}\Theta_h(L)\bigg]_{BC}\in H^{1,\,1}_{BC}(X,\,\R)$ is the usual first Chern class in the Bott-Chern cohomology of $L$.

\end{Def}

\vspace{3ex}

The first properties of these objects and their transcendental counterparts defined in the introduction are collected in the following

\begin{Lem}\label{Lem:m-psef_properties} The set-up is the one in Definitions \ref{Def:m-psef_introd} and \ref{Def:m-psef}.

\vspace{1ex}

(i)\, For every K\"ahler class $[\omega]_{BC}\in H^{1,\,1}_{BC}(X,\,\R)$, every $m\in\{1,\dots , n\}$ and every real current $T$ of bidegree $(1,\,1)$ such that $dT=0$, the $[\omega]$-$m$-semi-positivity property $[T]_{BC}\wedge[\omega_{m-1}]_{BC}\geq 0$ is independent of the choice of representative of the Bott-Chern class $[T]_{BC}\in H^{1,\,1}_{BC}(X,\,\R)$ and of the choice of K\"ahler metric in $[\omega]_{BC}$.

\vspace{1ex}

(ii)\, For every K\"ahler class $[\omega]_{BC}\in H^{1,\,1}_{BC}(X,\,\R)$ and every $m\in\{1,\dots , n\}$, a holomorphic line bundle $L\longrightarrow X$ is {\bf $[\omega]$-$m$-pseudo-effective} if and only if $c_1(L)_{BC}\in{\cal E}_{[\omega],\,m}(X)$.

\vspace{1ex}

 (iii)\, For every K\"ahler class $[\omega]_{BC}\in H^{1,\,1}_{BC}(X,\,\R)$ and every $m\in\{1,\dots , n\}$, the subset ${\cal E}_{[\omega],\,m}(X)\subset H^{1,\,1}_{BC}(X,\,\R)$ is a {\bf convex cone} in the sense that it is stable under addition and multiplication by non-negative scalars. Moreover, we have: \begin{eqnarray*}{\cal E}(X) = {\cal E}_{[\omega],\,1}(X)\subset\dots\subset{\cal E}_{[\omega],\,m}(X)\subset{\cal E}_{[\omega],\,m+1}(X)\subset\dots\subset{\cal E}_{[\omega],\,n}(X),\end{eqnarray*} where ${\cal E}(X)$ is the pseudo-effective cone of $X$ consisting of the Bott-Chern cohomology classes $[T]_{BC}$ of the $d$-closed semi-positive $(1,\,1)$-currents $T\geq 0$ on $X$.

\end{Lem}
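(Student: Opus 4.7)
For part (i), my plan is to show that both modifying the representative $T$ within its Bott-Chern class and modifying $\omega$ within $[\omega]_{BC}$ alter $T\wedge\omega_{m-1}$ only by a $\partial\bar\partial$-exact term, which can be absorbed into $S$. Given $T' = T + i\partial\bar\partial\varphi$, the Kählerness of $\omega$ yields $d\omega_{m-1} = 0$, so
\[ T'\wedge\omega_{m-1} - T\wedge\omega_{m-1} = i\partial\bar\partial(\varphi\,\omega_{m-1}), \]
and replacing $S$ by $S - \varphi\,\omega_{m-1}$ preserves the defining inequality. For $\omega' = \omega + i\partial\bar\partial f$ with $f$ smooth, I would use the telescoping identity $(\omega')^{m-1} - \omega^{m-1} = i\partial\bar\partial f\wedge\Sigma$, where $\Sigma := \sum_{k=0}^{m-2}(\omega')^{m-2-k}\wedge\omega^k$ is $d$-closed, together with the $d$-closedness of $T$ (which forces $\partial T = \bar\partial T = 0$), to conclude that $T\wedge((\omega')_{m-1} - \omega_{m-1}) \in \mathrm{im}(i\partial\bar\partial)$ and is again absorbable into $S$.

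For part (ii), the forward direction is immediate: a singular metric $h$ witnessing $L$'s $[\omega]$-$m$-psef property from Definition~\ref{Def:m-psef} provides $T := (i/2\pi)\Theta_h(L) \in c_1(L)_{BC}$ realizing the cone condition. For the converse, I would fix an auxiliary smooth Hermitian metric $h_0$ on $L$ with curvature $\alpha_0$. By the $\partial\bar\partial$-lemma on the compact Kähler manifold $X$, the given $d$-closed real $(1,1)$-current $T \in c_1(L)_{BC}$ satisfies $T = \alpha_0 + i\partial\bar\partial\varphi$ for some real $(0,0)$-current $\varphi$, and substituting exactly as in part (i) exhibits $h_0$ itself as a smooth metric making $L$ satisfy Definition~\ref{Def:m-psef}.

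For part (iii), convexity of ${\cal E}_{[\omega],m}(X)$ follows directly from the linearity of the defining inequality in the pair $(T,S)$. The equality ${\cal E}(X) = {\cal E}_{[\omega],1}(X)$ is the tautology $\omega_0 = 1$. For the key inclusion ${\cal E}_{[\omega],m}(X) \subset {\cal E}_{[\omega],m+1}(X)$, my plan is to wedge the defining inequality by the strongly positive $(1,1)$-form $\omega$. Using the identity $\omega_{m-1}\wedge\omega = m\,\omega_m$ and, by Kählerness, $(i\partial\bar\partial S)\wedge\omega = i\partial\bar\partial(S\wedge\omega)$, this transforms the inequality into
\[ T\wedge\omega_m + i\partial\bar\partial\bigl(\tfrac{1}{m}\,S\wedge\omega\bigr) \geq 0, \]
giving $[T]_{BC} \in {\cal E}_{[\omega],m+1}(X)$.

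The main technical point I expect to verify carefully is that wedging with $\omega$ preserves strong semi-positivity of the $(m,m)$-current $T\wedge\omega_{m-1} + i\partial\bar\partial S$. I would check this by duality: for any weakly positive test form $\beta$ of bidegree $(n-m-1,n-m-1)$, the product $\omega\wedge\beta$ is weakly positive of bidegree $(n-m,n-m)$ (as the wedge of a strongly positive $(1,1)$-form with a weakly positive form), so the pairing of the strongly semi-positive $(m,m)$-current with $\omega\wedge\beta$ is non-negative. Once this is settled, the rest of the argument is a routine unwinding of the definitions combined with the $\partial\bar\partial$-lemma on the compact Kähler manifold $X$.
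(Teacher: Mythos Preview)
Your proposal is correct and follows essentially the same approach as the paper's proof: absorb $i\partial\bar\partial$-exact changes into $S$ for part~(i), read part~(ii) off the definitions, and wedge the defining inequality by $\omega$ for the chain of inclusions in part~(iii). Two minor remarks: in part~(ii) you do not need to invoke the $\partial\bar\partial$-lemma, since $T$ and $\alpha_0$ lying in the same Bott--Chern class already means $T-\alpha_0\in\mathrm{im}(i\partial\bar\partial)$ by definition; and in part~(iii) the paper dispatches the positivity check more directly by the standard fact that the product of a strongly semi-positive current with a strongly positive form is again strongly semi-positive, whereas your duality argument (pairing against weakly positive test forms) is a correct alternative route to the same conclusion.
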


  \noindent {\it Proof.} (i)\, If $\varphi:X\longrightarrow\R\cup\{-\infty\}$ is an $L^1_{loc}$ function and $S$ is any real current of bidegree $(m-1,\,m-1)$ on $X$, then $(T+i\partial\bar\partial\varphi)\wedge\omega_{m-1} + i\partial\bar\partial(S-\varphi\,\omega_{m-1}) = T\wedge\omega_{m-1} + i\partial\bar\partial S$ thanks to the closedness of $\omega$. Thus, the pair $(T,\,S)$ satisfies the semi-positivity property (\ref{eqn:T_m-psef-class}) if and only if the pair $(T+i\partial\bar\partial\varphi,\,S-\varphi\,\omega_{m-1})$ does.

  The independence of the choice of K\"ahler metric in a given K\"ahler class can be proved similarly.

  (ii) follows at once from the definitions.

  (iii)\, The fact that ${\cal E}_{[\omega],\,m}(X)$ is a convex cone and the equality ${\cal E}(X) = {\cal E}_{[\omega],\,1}(X)$ follow at once from the definitions. To prove the inclusion ${\cal E}_{[\omega],\,m}(X)\subset{\cal E}_{[\omega],\,m+1}(X)$, we simply notice the validity of the implication: \begin{eqnarray*}T\wedge\omega_{m-1} + i\partial\bar\partial S\geq 0 \hspace{3ex} \mbox{(strongly)} \implies \omega\wedge\bigg(T\wedge\omega_{m-1} + i\partial\bar\partial S\bigg)\geq 0 \hspace{3ex} \mbox{(strongly)}\end{eqnarray*} due to the general fact that the product of two strongly semi-positive objects (two forms or a form and a current) is again a strongly semi-positive object. Moreover, since $d\omega = 0$, we have $\omega\wedge i\partial\bar\partial S = i\partial\bar\partial(\omega\wedge S)$. \hfill $\Box$

  \vspace{2ex}

  We will now further study the convex cones ${\cal E}_{[\omega],\,m}(X)$.

  \begin{Lem}\label{Lem:limiting-S-current} Let $(X,\,\omega)$ be a compact Hermitian $n$-dimensional manifold and let $m\in\{1,\dots , n\}$. Suppose there exists a Hermitian metric $\gamma$ on $X$ such that $\partial\bar\partial\gamma^{n-m} = 0$.

    Then, for any sequences $(T_j)_{j\in\N}$ of real $(1,\,1)$-currents and $(S_j)_{j\in\N}$ of real $(m-1,\,m-1)$-currents on $X$ such that \begin{eqnarray*}dT_j = 0 \hspace{5ex}\mbox{and}\hspace{5ex} T_j\wedge\omega_{m-1} + i\partial\bar\partial S_j\geq 0 \hspace{3ex} \mbox{(strongly)}\end{eqnarray*} for all $j\in\N$ and such that $T_j$ converges in the weak topology of currents to some $T$ as $j\to\infty$, there exists a real $(m-1,\,m-1)$-current $S$ on $X$ such that \begin{eqnarray*}T\wedge\omega_{m-1} + i\partial\bar\partial S\geq 0 \hspace{3ex} \mbox{(strongly)}.\end{eqnarray*}

\end{Lem}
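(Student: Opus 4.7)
The plan is to verify the integration-pairing hypothesis $(ii)$ of Lemma \ref{Lem:duality_generalised-Lamari} for $\theta := T \wedge \omega_{m-1}$ and then invoke (a mild extension to currents of) that duality lemma to produce the desired $S$.

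First, I fix an arbitrary test form $\Omega \in C^\infty_{n-m,\,n-m}(X,\,\R)$ with $\Omega > 0$ weakly and $\partial\bar\partial \Omega = 0$. For every $j$, the $(m,\,m)$-current $T_j \wedge \omega_{m-1} + i\partial\bar\partial S_j$ is strongly semi-positive by hypothesis, hence its wedge product with the weakly positive smooth form $\Omega$ is a non-negative $(n,\,n)$-current. Integrating and applying Stokes's theorem together with $\partial\bar\partial\Omega = 0$ to annihilate the $i\partial\bar\partial S_j$ contribution yields
\begin{equation*}
0 \leq \int_X \bigl(T_j \wedge \omega_{m-1} + i\partial\bar\partial S_j\bigr) \wedge \Omega = \int_X T_j \wedge \omega_{m-1} \wedge \Omega.
\end{equation*}
Since $\omega_{m-1} \wedge \Omega$ is a smooth $(n-1,\,n-1)$-form on $X$, the assumed weak convergence $T_j \to T$ lets me pass to the limit $j \to \infty$, obtaining $\int_X T \wedge \omega_{m-1} \wedge \Omega \geq 0$ for every such $\Omega$. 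I then apply Lemma \ref{Lem:duality_generalised-Lamari}, extended so that the datum $\theta$ is allowed to be a current of bidegree $(m,\,m)$ rather than a smooth form, to $\theta := T \wedge \omega_{m-1}$; the output is a real $(m-1,\,m-1)$-current $S$ such that $T \wedge \omega_{m-1} + i\partial\bar\partial S \geq 0$ strongly on $X$, which is exactly the conclusion sought.

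The only subtle point is this promotion of Lemma \ref{Lem:duality_generalised-Lamari} from smooth $\theta$ to a current $\theta$. The Hahn-Banach argument reproduced in the paper carries over essentially verbatim: a current $\theta$ still defines a continuous $\R$-linear functional on $C^\infty_{n-m,\,n-m}(X,\,\R)$, the open convex cone $U$ and the hyperplane $F$ are unchanged, and the separating functional $\widetilde{T}$ is again produced as a strongly semi-positive current. The one step that requires justification in the current setting is the assertion (appearing in Case $1$) that a current annihilating $\ker(\partial\bar\partial) \cap C^\infty_{n-m,\,n-m}(X,\,\R)$ lies in $\mbox{Im}\,(i\partial\bar\partial)$ on $(m-1,\,m-1)$-currents, which follows from the closedness of $\mbox{Im}\,(i\partial\bar\partial)$ in the space of currents (a standard consequence of the elliptic theory of the Bott-Chern Laplacian). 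The hypothesis $\partial\bar\partial\gamma^{n-m} = 0$ enters precisely to ensure that $U \cap \ker(\partial\bar\partial)$ is non-empty, as it contains $\gamma^{n-m}$; without this, the Hahn-Banach separation step of Case $2$ would be vacuous.
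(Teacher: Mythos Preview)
Your argument is correct but takes a different route from the paper's. The paper proceeds by a direct compactness argument: it uses the test form $\gamma^{n-m}$ to bound the masses of the strongly semi-positive currents $T_j\wedge\omega_{m-1} + i\partial\bar\partial S_j$ uniformly, extracts a weakly convergent subsequence with limit $\widetilde{T}\geq 0$, and then shows that $\widetilde{T} - T\wedge\omega_{m-1}$ annihilates every $\partial\bar\partial$-closed smooth $(n-m,n-m)$-form, hence lies in $\mbox{Im}(\partial\bar\partial)$. You instead bypass the subsequence extraction entirely: passing the inequality $\int_X T_j\wedge\omega_{m-1}\wedge\Omega\geq 0$ to the limit uses only the weak convergence $T_j\to T$ against the fixed smooth form $\omega_{m-1}\wedge\Omega$, after which you invoke the duality Lemma~\ref{Lem:duality_generalised-Lamari} (extended to current data $\theta$) as a black box. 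Your approach is arguably cleaner and makes the role of the hypothesis $\partial\bar\partial\gamma^{n-m}=0$ more transparent---it guarantees $U\cap\ker(\partial\bar\partial)\neq\emptyset$ so that the Hahn--Banach step in Case~2 is not vacuous---whereas in the paper's argument the same hypothesis serves the different purpose of controlling masses. The cost is that you must check the Hahn--Banach proof of Lemma~\ref{Lem:duality_generalised-Lamari} goes through when $\theta$ is a current; your sketch of this extension is accurate, and in fact the paper itself uses the very same duality fact (a current orthogonal to $\ker(\partial\bar\partial)$ on smooth forms is $\partial\bar\partial$-exact) in the final step of its own proof.
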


  \noindent {\it Proof.} The first integral below represents the $\gamma^{n-m}$-mass of the strongly semi-positive $(m,\,m)$-current $T_j\wedge\omega_{m-1} + i\partial\bar\partial S_j$: \begin{eqnarray*}0\leq\int\limits_X(T_j\wedge\omega_{m-1} + i\partial\bar\partial S_j)\wedge\gamma^{n-m} = \int\limits_X T_j\wedge\omega_{m-1}\wedge\gamma^{n-m}\longrightarrow\int\limits_X T\wedge\omega_{m-1}\wedge\gamma^{n-m} \hspace{2ex}\mbox{as}\hspace{1ex}j\to\infty,\end{eqnarray*} where the equality followed from the Stokes theorem and the hypothesis $\partial\bar\partial\gamma^{n-m} = 0$, while the convergence of the integrals followed from the weak convergence $T_j\longrightarrow T$.

  We infer that the sequence $(T_j\wedge\omega_{m-1} + i\partial\bar\partial S_j)_{j\in\N}$ of strongly semi-positive $(m,\,m)$-currents is uniformly bounded in mass , hence there exists a weakly convergent subsequence: \begin{eqnarray*}T_{j_\nu}\wedge\omega_{m-1} + i\partial\bar\partial S_{j_\nu}\longrightarrow\widetilde{T}  \hspace{5ex}\mbox{as}\hspace{1ex}\nu\to\infty,\end{eqnarray*} with a limiting  $(m,\,m)$-current $\widetilde{T}$ that is necessarily strongly semi-positive on $X$.

  The weak convergence $T_{j_\nu}\longrightarrow T$ implies the weak convergence $T_{j_\nu}\wedge\omega_{m-1}\longrightarrow T\wedge\omega_{m-1}$, hence also the weak convergence \begin{eqnarray*}i\partial\bar\partial S_{j_\nu}\longrightarrow\widetilde{T} - T\wedge\omega_{m-1}\end{eqnarray*} as $\nu\to\infty$. This implies that $\widetilde{T} - T\wedge\omega_{m-1}\in\mbox{Im}\,(\partial\bar\partial)$. Indeed, by the current-form duality, a current is $\partial\bar\partial$-exact if and only if it vanishes on all the $\partial\bar\partial$-closed $C^\infty$ forms of the complementary bidegree. In our case, for every $\Gamma\in C^\infty_{n-m,\,n-m}(X,\,\R)$ such that $\partial\bar\partial\Gamma = 0$, we have  \begin{eqnarray*}0 = \int\limits_X i\partial\bar\partial S_{j_\nu}\wedge\Gamma\longrightarrow\int\limits_X(\widetilde{T} - T\wedge\omega_{m-1})\wedge\Gamma \hspace{5ex} \mbox{as}\hspace{1ex} \nu\to\infty.\end{eqnarray*} This implies that $\int_X(\widetilde{T} - T\wedge\omega_{m-1})\wedge\Gamma = 0$ for every $\Gamma\in C^\infty_{n-m,\,n-m}(X,\,\R)$ such that $\partial\bar\partial\Gamma = 0$.

  We conclude the existence of a real $(m-1,\,m-1)$-current $S$ on $X$ such that $\widetilde{T} - T\wedge\omega_{m-1} = i\partial\bar\partial S$. Thus: \begin{eqnarray*}T\wedge\omega_{m-1} + i\partial\bar\partial S = \widetilde{T}\geq 0 \hspace{3ex} \mbox{(strongly) on} \hspace{1ex} X,\end{eqnarray*} as claimed. \hfill $\Box$

\vspace{2ex}

We can now give the following generalisation of one half of (i) of Proposition 6.1. in [Dem92].

\begin{Prop}\label{Prop:E_omega-m_closed} Let $(X,\,\omega)$ be a compact K\"ahler $n$-dimensional manifold and let $m\in\{1,\dots , n\}$.

    Then, the convex cone ${\cal E}_{[\omega],\,m}(X)$ is {\bf closed} in $H^{1,\,1}_{BC}(X,\,\R)$.

\end{Prop}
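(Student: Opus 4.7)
The plan is to combine finite-dimensionality (together with the existence of smooth harmonic Bott-Chern representatives) with the previously established Lemma \ref{Lem:limiting-S-current}, which was tailor-made for passing to the limit in a semi-positivity condition of the form $T\wedge\omega_{m-1}+i\partial\bar\partial S\geq 0$.

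Let a sequence $[T_j]_{BC}\in\mathcal{E}_{[\omega],\,m}(X)$ converge to some class $\mathfrak{c}\in H^{1,1}_{BC}(X,\R)$. For each $j$, by Definition \ref{Def:m-psef_introd}, there exists a real $(m-1,\,m-1)$-current $S_j$ on $X$ together with some real $d$-closed $(1,1)$-current $T_j$ representing the class such that
\begin{eqnarray*}
T_j\wedge\omega_{m-1}+i\partial\bar\partial S_j\geq 0\quad\text{(strongly)}.
\end{eqnarray*}
The first move is to replace $T_j$ by a representative that converges in the weak topology of currents. Since $X$ is compact K\"ahler, $H^{1,1}_{BC}(X,\R)$ is finite-dimensional and every class contains a unique harmonic (equivalently, $d$-closed and $d^\star$-closed) smooth real $(1,1)$-form representative. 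Let $\alpha_j$ denote the harmonic representative of $[T_j]_{BC}$ and $\alpha$ that of $\mathfrak{c}$. Because harmonic projection identifies $H^{1,1}_{BC}(X,\R)$ with the finite-dimensional space of harmonic forms, and all norms on a finite-dimensional space are equivalent, convergence of the classes forces $\alpha_j\to\alpha$ in the $C^\infty$ topology, hence weakly as currents. Writing $T_j=\alpha_j+i\partial\bar\partial\varphi_j$ for suitable real $(0,0)$-currents $\varphi_j$, the adjustment between $T_j$ and $\alpha_j$ can be absorbed into $S_j$ (replace $S_j$ by $S_j+\varphi_j\,\omega_{m-1}$, using $d\omega=0$), so we may assume from the outset that
\begin{eqnarray*}
\alpha_j\wedge\omega_{m-1}+i\partial\bar\partial S_j\geq 0\quad\text{(strongly)},\qquad j\in\N.
\end{eqnarray*}

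Now we invoke Lemma \ref{Lem:limiting-S-current}, whose hypothesis $\partial\bar\partial\gamma^{n-m}=0$ is automatically satisfied in the K\"ahler setting with $\gamma=\omega$, since $d\omega=0$ implies $d\omega^{n-m}=0$. Applying it to the sequences $(\alpha_j)_j$ and $(S_j)_j$, using the weak convergence $\alpha_j\to\alpha$ already established, we obtain a real $(m-1,m-1)$-current $S$ on $X$ such that
\begin{eqnarray*}
\alpha\wedge\omega_{m-1}+i\partial\bar\partial S\geq 0\quad\text{(strongly)}.
\end{eqnarray*}
This is precisely the condition that $\mathfrak{c}=[\alpha]_{BC}\in\mathcal{E}_{[\omega],\,m}(X)$, proving closedness.

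The only delicate point is the first step, namely finding current representatives of $[T_j]_{BC}$ and of $\mathfrak{c}$ that converge weakly. Once this has been secured by invoking smooth harmonic Bott-Chern representatives and the finite-dimensionality of $H^{1,1}_{BC}(X,\R)$, the remainder is a direct application of Lemma \ref{Lem:limiting-S-current}. No mass-control estimate on $S_j$ is needed thanks to the content of that lemma, where a weakly convergent subsequence of $T_j\wedge\omega_{m-1}+i\partial\bar\partial S_j$ is extracted from uniform $\gamma^{n-m}$-mass bounds coming from the $d$-closedness of the representatives.
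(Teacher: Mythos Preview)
Your proof is correct and follows essentially the same approach as the paper: choose harmonic representatives $\alpha_j\to\alpha$ of the converging classes (the paper uses $\Delta_{BC}$-harmonic representatives, you use $\Delta$-harmonic ones, which coincide here since $X$ is compact K\"ahler), then apply Lemma~\ref{Lem:limiting-S-current} with $\gamma=\omega$. Your extra step of explicitly absorbing $i\partial\bar\partial\varphi_j$ into $S_j$ is just the content of Lemma~\ref{Lem:m-psef_properties}\,(i), which the paper invokes implicitly by working directly with the harmonic representatives.
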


\noindent {\it Proof.} Let ${\cal E}_{[\omega],\,m}(X)\ni\mathfrak{c}_j\underset{j\to\infty}{\longrightarrow}\mathfrak{c}\in H^{1,\,1}_{BC}(X,\,\R)$ be a convergent sequence of cohomology classes. We have to show that $\mathfrak{c}\in{\cal E}_{[\omega],\,m}(X)$.

Let $\alpha_j\in\mathfrak{c}_j$ (for every $j$) and $\alpha\in\mathfrak{c}$ be the $\Delta_{BC}$-harmonic representatives of these classes, where $\Delta_{BC}$ is the Bott-Chern Laplacian induced by the metric $\omega$. In particular, $d\alpha_j = d\alpha = 0$ for all $j$. Then, the following convergence holds in the $C^\infty$ topology: \begin{eqnarray*}\alpha_j\longrightarrow\alpha \hspace{5ex}\mbox{as}\hspace{1ex}j\to\infty.\end{eqnarray*}

Now, for every $j$, since $\mathfrak{c}_j = [\alpha_j]_{BC}\in{\cal E}_{[\omega],\,m}(X)$, there exists a real $(m-1,\,m-1)$-current $S_j$ on $X$ such that \begin{eqnarray*}\alpha_j\wedge\omega_{m-1} + i\partial\bar\partial S_j\geq 0 \hspace{3ex} \mbox{(strongly)}\end{eqnarray*} as an $(m,\,m)$-current on $X$.

Thus, the hypotheses of Lemma \ref{Lem:limiting-S-current} are satisfied with $T_j=\alpha_j$ (because convergence in the $C^\infty$ topology implies convergence in the weak topology of currents for $\alpha_j\underset{j\to\infty}{\longrightarrow}\alpha$) and $\gamma=\omega$. That lemma ensures the existence of a real $(m-1,\,m-1)$-current $S$ on $X$ such that \begin{eqnarray*}\alpha\wedge\omega_{m-1} + i\partial\bar\partial S\geq 0 \hspace{3ex} \mbox{(strongly)}\end{eqnarray*} as an $(m,\,m)$-current on $X$. This, in turn, ensures that $\mathfrak{c}=[\alpha]_{BC}\in{\cal E}_{[\omega],\,m}(X)$, as desired.  \hfill $\Box$

\subsection{An equivalence between two $m$-positivity notions}\label{subsection:m-pos_equiv} For reasons that will become apparent further afield, we start by introducing the following Laplace-type differential operator acting on $\R$-valued or $\C$-valued $C^\infty$ functions on $X$ and depending on a given pair $(\omega,\,\Omega)$ of suitable differential forms.

\begin{Def}\label{Def:P_omega_Omega} Let $X$ be a compact complex $n$-dimensional manifold and let $m\in\{1,\dots , n\}$. Suppose there exist a K\"ahler metric $\omega$ on $X$ and a real form $\Omega\in C^\infty_{n-m,\,n-m}(X,\,\R)$ such that \begin{eqnarray}\label{eqn:hypotheses_Omega}\Omega>0 \hspace{2ex} \mbox{(weakly) \hspace{3ex} and \hspace{3ex}} \partial\bar\partial\Omega = 0.\end{eqnarray}

We set: \begin{eqnarray*}   P = P_{\omega,\,\Omega}:C^\infty(X,\,\R)\longrightarrow C^\infty(X,\,\R), \hspace{5ex} P_{\omega,\,\Omega}(\varphi):=-\frac{i\partial\bar\partial\varphi\wedge\omega^{m-1}\wedge\Omega}{dV_\omega}.\end{eqnarray*}

The same definition is made for $P = P_{\omega,\,\Omega}:C^\infty(X,\,\C)\longrightarrow C^\infty(X,\,\C)$.
  
\end{Def}

The adjoint operator is computed in

\begin{Lem}\label{Lem:P-operator_adjoint} Let $P^\star_{\omega,\,\Omega}:C^\infty(X,\,\C)\longrightarrow C^\infty(X,\,\C)$ be the $L^2_\omega$-adjoint of the operator introduced in Definition \ref{Def:P_omega_Omega}. For every $C^\infty$ function $\varphi:X\longrightarrow\C$, the following identity holds when $X$ is compact: \begin{eqnarray*}\bigg(P^\star_{\omega,\,\Omega} - P_{\omega,\,\Omega}\bigg)\,(\varphi) = \frac{i(\bar\partial\varphi\wedge\partial\Omega - \partial\varphi\wedge\bar\partial\Omega)\wedge\omega^{m-1}}{dV_\omega}.\end{eqnarray*}

In particular, $P^\star_{\omega,\,\Omega}$ and $P_{\omega,\,\Omega}$ differ by a first-order operator, so $P^\star_{\omega,\,\Omega}$ is elliptic of order two with no zero$^{th}$-order terms.

\end{Lem}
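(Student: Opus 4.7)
My plan is to identify $(P^\star-P)\varphi$ by computing $\langle\varphi,P\psi\rangle_\omega - \langle P\varphi,\psi\rangle_\omega$ for an arbitrary smooth test function $\psi$ and reading off the pointwise integrand. First, since $\omega$, $\Omega$ and $dV_\omega$ are real and one has the conjugation identity $\overline{i\partial\bar\partial\psi} = i\partial\bar\partial\bar\psi$ (coming from $\overline{\partial\bar\partial f} = -\partial\bar\partial\bar f$), both pairings unwind to integrals of wedge products and subtract to
\begin{equation*}
\langle(P^\star-P)\varphi,\psi\rangle_\omega \;=\; \int_X i\bigl(\bar\psi\,\partial\bar\partial\varphi \;-\; \varphi\,\partial\bar\partial\bar\psi\bigr)\wedge\omega^{m-1}\wedge\Omega.
\end{equation*}

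The core of the argument is then two rounds of integration by parts that push the derivatives off both $\varphi$ and $\bar\psi$. In the first round I would rewrite the integrand using
\begin{equation*}
\bar\psi\,\partial\bar\partial\varphi - \varphi\,\partial\bar\partial\bar\psi \;=\; \partial\bigl(\bar\psi\,\bar\partial\varphi - \varphi\,\bar\partial\bar\psi\bigr) - \partial\bar\psi\wedge\bar\partial\varphi + \partial\varphi\wedge\bar\partial\bar\psi;
\end{equation*}
because $\omega$ is K\"ahler (so $\partial\omega^{m-1}=0$) and the wedge product $i(\bar\psi\bar\partial\varphi-\varphi\bar\partial\bar\psi)\wedge\omega^{m-1}\wedge\Omega$ has bidegree $(n-1,n)$ (killing the $\bar\partial$ part for degree reasons), Stokes' theorem turns the leading exact $\partial$-piece into $\int_X i(\bar\psi\bar\partial\varphi - \varphi\bar\partial\bar\psi)\wedge\omega^{m-1}\wedge\partial\Omega$. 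In the second round I would run three further Stokes applications to eliminate the remaining mixed-derivative terms: integrating $\bar\partial(\varphi\bar\psi\wedge\omega^{m-1}\wedge\partial\Omega)$ (using $\bar\partial\partial\Omega = -\partial\bar\partial\Omega = 0$ by hypothesis) trades $\varphi\,\bar\partial\bar\psi$ for $-\bar\psi\,\bar\partial\varphi$ against $\partial\Omega$, while integrating $\partial(\bar\psi\,\bar\partial\varphi\wedge\omega^{m-1}\wedge\Omega)$ and $\bar\partial(\bar\psi\,\partial\varphi\wedge\omega^{m-1}\wedge\Omega)$ trades $\partial\bar\psi\wedge\bar\partial\varphi$ and $\partial\varphi\wedge\bar\partial\bar\psi$ for $\bar\psi\,\partial\bar\partial\varphi$-terms (which cancel against each other with opposite signs) plus $\bar\psi\,\bar\partial\varphi\wedge\partial\Omega$ and $\bar\psi\,\partial\varphi\wedge\bar\partial\Omega$ contributions respectively.

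Collecting these identities, all $\partial\bar\partial\varphi$ and $\partial\bar\partial\bar\psi$ pieces drop out and one is left with
\begin{equation*}
\langle(P^\star-P)\varphi,\psi\rangle_\omega \;=\; \int_X \bar\psi\cdot i\bigl(\bar\partial\varphi\wedge\partial\Omega - \partial\varphi\wedge\bar\partial\Omega\bigr)\wedge\omega^{m-1},
\end{equation*}
which yields the claimed pointwise formula by the arbitrariness of $\psi$. The \emph{in particular} clause is then immediate: only first-order derivatives of $\varphi$ occur in the right-hand side, so $P^\star-P$ is a first-order differential operator; hence $P^\star$ has the same principal symbol as $P$, namely the map $\xi\mapsto i\xi^{1,0}\wedge\xi^{0,1}\wedge\omega^{m-1}\wedge\Omega/dV_\omega$, which is elliptic since $i\xi^{1,0}\wedge\xi^{0,1}\wedge\omega^{m-1}$ is strongly positive for $\xi\neq 0$ and $\Omega>0$ weakly, and no zeroth-order term appears because no undifferentiated $\varphi$ enters the correction. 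The main obstacle I anticipate is the sign bookkeeping in the second round of integration by parts: the hypothesis $\partial\bar\partial\Omega=0$ is strictly weaker than $d\Omega=0$, so genuine $\partial\Omega$- and $\bar\partial\Omega$-boundary terms are produced at each Stokes step and must be tracked rather than discarded, and the crucial cancellations of the $\partial\bar\partial\varphi$-remainders only emerge after carefully exploiting the $(-1)^{2m-1}$ signs from wedging past the odd-degree factor $\omega^{m-1}$ accompanied by $\bar\partial\varphi$ or $\partial\varphi$.
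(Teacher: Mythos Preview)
Your proposal is correct and follows essentially the same route as the paper: both compute $\langle\langle\varphi,P\psi\rangle\rangle$ versus $\langle\langle P\varphi,\psi\rangle\rangle$ by repeated Stokes/Leibniz manipulations exploiting $d\omega=0$ and $\partial\bar\partial\Omega=0$, and then read off the first-order correction. The only cosmetic difference is the organising identity: the paper expands $i\partial\bar\partial(\varphi\bar\psi)$ and shows its integral against $\omega^{m-1}\wedge\Omega$ vanishes, whereas you start from $\bar\psi\,\partial\bar\partial\varphi - \varphi\,\partial\bar\partial\bar\psi = \partial(\bar\psi\,\bar\partial\varphi - \varphi\,\bar\partial\bar\psi) - \partial\bar\psi\wedge\bar\partial\varphi + \partial\varphi\wedge\bar\partial\bar\psi$; the subsequent Stokes steps and the final collection of $\partial\Omega$- and $\bar\partial\Omega$-terms are the same.
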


\noindent {\it Proof.} Let $\varphi,\psi:X\longrightarrow\C$ be arbitrary $C^\infty$ functions. We have: \begin{eqnarray*}\langle\langle P_{\omega,\,\Omega}(\varphi),\,\psi\rangle\rangle = -\int\limits_X\overline\psi\, i\partial\bar\partial\varphi\wedge\omega^{m-1}\wedge\Omega  \hspace{3ex}\mbox{and}\hspace{3ex} \langle\langle \varphi,\,P_{\omega,\,\Omega}(\psi)\rangle\rangle = -\int\limits_X\varphi\, i\partial\bar\partial\,\overline\psi\wedge\omega^{m-1}\wedge\Omega.\end{eqnarray*}

On the other hand, we have: \begin{eqnarray*}i\partial\bar\partial(\varphi\,\overline\psi) = \varphi\,i\partial\bar\partial\,\overline\psi + i\partial\varphi\wedge\bar\partial\,\overline\psi + \overline\psi\,i\partial\bar\partial\varphi + i\partial\overline\psi\wedge\bar\partial\varphi.\end{eqnarray*}

Plugging the value for $\varphi\,i\partial\bar\partial\,\overline\psi$ given by this equality into one of the previous expressions, we get the second equality below: \begin{eqnarray}\label{eqn:P-operator-adjoint_proof_1}\nonumber\langle\langle P^\star_{\omega,\,\Omega}(\varphi),\,\psi\rangle\rangle = \langle\langle \varphi,\,P_{\omega,\,\Omega}(\psi)\rangle\rangle & = & \int\limits_X \overline\psi\,i\partial\bar\partial\varphi\wedge\omega^{m-1}\wedge\Omega + \int\limits_X i\partial\varphi\wedge\bar\partial\,\overline\psi\wedge\omega^{m-1}\wedge\Omega \\
  & + & \int\limits_X i\partial\overline\psi\wedge\bar\partial\varphi\wedge\omega^{m-1}\wedge\Omega - \int\limits_X i\partial\bar\partial(\varphi\,\overline\psi)\wedge\omega^{m-1}\wedge\Omega.\end{eqnarray}

Now, thanks to $\omega^{m-1}$ being $\partial$-closed and $\bar\partial$-closed and to $\partial\bar\partial\Omega = 0$, two applications of the Stokes theorem yield the following result for the last integral: \begin{eqnarray*}\int\limits_X i\partial\bar\partial(\varphi\,\overline\psi)\wedge\omega^{m-1}\wedge\Omega & = & i\, \int\limits_X\partial\bigg(\bar\partial(\varphi\,\overline\psi)\wedge\omega^{m-1}\wedge\Omega\bigg) + i\, \int\limits_X\bar\partial(\varphi\,\overline\psi)\wedge\omega^{m-1}\wedge\partial\Omega \\
  & = & i\, \int\limits_X\bar\partial\bigg(\varphi\,\overline\psi\,\omega^{m-1}\wedge\partial\Omega\bigg) + i\, \int\limits_X\varphi\,\overline\psi\,\omega^{m-1}\wedge\partial\bar\partial\Omega = 0.\end{eqnarray*}

Similarly, for the last integral on the first line of (\ref{eqn:P-operator-adjoint_proof_1}), we get: \begin{eqnarray*}\int\limits_X i\partial\varphi\wedge\bar\partial\,\overline\psi\wedge\omega^{m-1}\wedge\Omega = -\int\limits_X\overline\psi\, i\partial\bar\partial\varphi\wedge\omega^{m-1}\wedge\Omega -i\,\int\limits_X\overline\psi\,\partial\varphi\wedge\omega^{m-1}\wedge\bar\partial\Omega,\end{eqnarray*} (so, this computes the sum of the last two integrals on the first line of (\ref{eqn:P-operator-adjoint_proof_1})), while for the first integral on the second line of (\ref{eqn:P-operator-adjoint_proof_1}), we get: \begin{eqnarray*}\int\limits_X i\partial\overline\psi\wedge\bar\partial\varphi\wedge\omega^{m-1}\wedge\Omega & = & -\int\limits_X\overline\psi\, i\partial\bar\partial\varphi\wedge\omega^{m-1}\wedge\Omega + i\,\int\limits_X\overline\psi\,\bar\partial\varphi\wedge\omega^{m-1}\wedge\partial\Omega \\
  & = & \langle\langle P_{\omega,\,\Omega}(\varphi),\,\psi\rangle\rangle + i\,\int\limits_X\overline\psi\,\bar\partial\varphi\wedge\omega^{m-1}\wedge\partial\Omega.\end{eqnarray*}

All these computation results transform (\ref{eqn:P-operator-adjoint_proof_1}) into: \begin{eqnarray*}\langle\langle P^\star_{\omega,\,\Omega}(\varphi),\,\psi\rangle\rangle = \langle\langle P_{\omega,\,\Omega}(\varphi),\,\psi\rangle\rangle + \int\limits_X \overline\psi\,i(\bar\partial\varphi\wedge\partial\Omega - \partial\varphi\wedge\bar\partial\Omega)\wedge\omega^{m-1}.\end{eqnarray*} This equality holds for all $C^\infty$ functions $\varphi,\psi:X\longrightarrow\C$. This proves the contention. \hfill $\Box$

\begin{Cor}\label{Cor:2-space-decomposition} If the manifold $X$ is compact, $\ker\,\bigg(P_{\omega,\,\Omega}:C^\infty(X,\,\R)\longrightarrow C^\infty(X,\,\R\bigg) = \R$ and $\ker\,\bigg(P_{\omega,\,\Omega}:C^\infty(X,\,\C)\longrightarrow C^\infty(X,\,\C)\bigg) =\C$. Moreover, the $L^2_\omega$-orthogonal decompositions hold: \begin{eqnarray}\label{eqn:2-space-decomposition}C^\infty(X,\,\R) = \R\oplus\mbox{Im}\,P_{\omega,\,\Omega} \hspace{3ex}\mbox{and}\hspace{3ex} C^\infty(X,\,\C) = \C\oplus\mbox{Im}\,P_{\omega,\,\Omega}.\end{eqnarray}

\end{Cor}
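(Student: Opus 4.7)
The plan is to combine a global $i\partial\bar\partial$-integration by parts identity with standard elliptic theory on compact manifolds. First, I would observe that $P:=P_{\omega,\Omega}$ and $P^\star:=P^\star_{\omega,\Omega}$ share the same principal symbol, since they differ by a first-order operator by Lemma \ref{Lem:P-operator_adjoint}. A direct computation shows that at a real covector $\xi\neq 0$ this symbol is a non-zero multiple of $(i\xi^{1,0}\wedge\xi^{0,1})\wedge\omega^{m-1}\wedge\Omega/dV_\omega$, which does not vanish thanks to the strict weak positivity of $\Omega$ together with a standard Lefschetz-type argument; hence $P$ is elliptic of order two. Both operators annihilate constants (directly for $P$ since $i\partial\bar\partial c=0$, and by Lemma \ref{Lem:P-operator_adjoint} for $P^\star$, which has no zeroth-order terms). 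The Fredholm alternative on the compact manifold $X$ therefore supplies the $L^2_\omega$-orthogonal decomposition $C^\infty(X,\R)=\ker P^\star\oplus\mathrm{Im}\,P$ (and its complex counterpart). What remains is to show that $\ker P=\ker P^\star=\R$ (resp.\ $\C$).

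The key identity arises from the Leibniz rule applied to $\varphi\bar\varphi$:
\begin{equation*}
i\partial\bar\partial(\varphi\bar\varphi)=\bar\varphi\,i\partial\bar\partial\varphi+\varphi\,i\partial\bar\partial\bar\varphi+i\partial\bar\varphi\wedge\bar\partial\varphi+i\partial\varphi\wedge\bar\partial\bar\varphi.
\end{equation*}
Wedging with $\omega^{m-1}\wedge\Omega$ and integrating over $X$, two applications of Stokes's theorem using $d\omega=0$ and $\partial\bar\partial\Omega=0$ (exactly as in the proof of Lemma \ref{Lem:P-operator_adjoint}) annihilate the left-hand side. In view of the definition of $P$, what remains is
\begin{equation*}
\int_X\bar\varphi\,P(\varphi)\,dV_\omega+\int_X\varphi\,P(\bar\varphi)\,dV_\omega=\int_X\bigl(i\partial\bar\varphi\wedge\bar\partial\varphi+i\partial\varphi\wedge\bar\partial\bar\varphi\bigr)\wedge\omega^{m-1}\wedge\Omega,
\end{equation*}
and the right-hand side is a sum of two non-negative integrals, since $i\partial\varphi\wedge\bar\partial\bar\varphi$ and $i\partial\bar\varphi\wedge\bar\partial\varphi$ are strongly positive $(1,1)$-forms, $\omega^{m-1}$ is strongly positive, and $\Omega$ is weakly positive.

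If $P\varphi=0$, then $P\bar\varphi=\overline{P\varphi}=0$ as well (since $P$ commutes with conjugation because $\Omega$ is real), so the left-hand side vanishes and both non-negative integrands on the right vanish identically on $X$. The strict weak positivity of $\Omega$ forces $i\partial\varphi\wedge\bar\partial\bar\varphi\wedge\omega^{m-1}\equiv 0$ pointwise, and wedging further with $\omega^{n-m}$ converts this into $(n-1)!\,|\partial\varphi|^2_\omega\,dV_\omega\equiv 0$, whence $\partial\varphi\equiv 0$; the companion integrand analogously yields $\bar\partial\varphi\equiv 0$, so $\varphi$ is constant. The same argument handles $P^\star\varphi=0$: by conjugation $P^\star\bar\varphi=0$, and adjointness gives $\int_X\bar\varphi\,P\varphi\,dV_\omega=\int_X\varphi\,P\bar\varphi\,dV_\omega=0$, so the positivity analysis again forces $\varphi$ to be constant. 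I expect the main technical obstacle to be this pointwise positivity step, where extracting $\partial\varphi\equiv 0$ from $i\partial\varphi\wedge\bar\partial\bar\varphi\wedge\omega^{m-1}\wedge\Omega\equiv 0$ rests on the duality between strongly positive and strictly weakly positive forms combined with the standard identity $i\alpha\wedge\bar\alpha\wedge\omega^{n-1}=(n-1)!\,|\alpha|^2_\omega\,dV_\omega$ for $(1,0)$-forms $\alpha$.
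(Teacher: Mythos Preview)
Your argument is correct, and it differs from the paper's proof. The paper dispatches the kernel computation in one line by invoking the \emph{maximum principle}: Lemma~\ref{Lem:P-operator_adjoint} shows that $P^\star_{\omega,\Omega}$ (and trivially $P_{\omega,\Omega}$ itself) is a second-order elliptic operator with no zeroth-order term, so on the compact manifold $X$ any function in its kernel must be constant; the decomposition then follows from standard elliptic theory exactly as you describe.

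Your route replaces the maximum principle by a direct $i\partial\bar\partial$-integration-by-parts identity for $|\varphi|^2$ together with the positivity duality between strongly positive $(m,m)$-forms and the strictly weakly positive form $\Omega$. This is a bit longer but has the advantage of being entirely self-contained within the positivity framework of the paper: the same wedge-product positivity that underlies Lemma~\ref{Lem:duality_generalised-Lamari} and Lemma~\ref{Lem:pointwise_ineq_form-products} does all the work, so one never needs to appeal to the (Hopf-type) maximum principle for non-self-adjoint operators. The paper's approach, on the other hand, is shorter and makes transparent why the ``no zeroth-order term'' conclusion of Lemma~\ref{Lem:P-operator_adjoint} was singled out. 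Both arguments rely on the same Fredholm/elliptic decomposition $C^\infty=\ker P^\star\oplus\mathrm{Im}\,P$ for the final step.
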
  

\noindent {\it Proof.} The statement follows from Lemma \ref{Lem:P-operator_adjoint} and the maximum principle applied to $P^\star_{\omega,\,\Omega}$ on the compact $X$. \hfill $\Box$

\vspace{2ex}

With this preparation under our belt, we will now prove Theorem \ref {The:m-pos_equiv_introd} which is the transcendental version of the following

\begin{The}\label{The:m-pos_equiv} Let $(X,\omega)$ be a compact K\"ahler manifold with $\mbox{dim}_\C X = n$. Fix $m\in\{1,\dots , n\}$.

%\vspace{1ex}

%\noindent $(1)$\, For every cohomology class $\mathfrak{c}\in H^{1,\,1}_{BC}(X,\,\R)$, the following two statements are equivalent.

%  \vspace{1ex}

%  (a) There exist a $C^\infty$ $d$-closed real $(1,\,1)$-form $\alpha\in\mathfrak{c}$ and a form $\Omega_0\in C^\infty_{n-m,\,n-m}(X,\,\R)$ with the properties: \begin{eqnarray}\label{eqn:m-pos_equiv_Omega-prop}\Omega_0>0 \hspace{2ex} (\mbox{weakly}) \hspace{3ex}\mbox{and}\hspace{3ex} \partial\bar\partial\Omega_0 = 0\end{eqnarray} such that $\alpha\wedge\omega^{m-1}\wedge\Omega_0 >0$ everywhere on $X$;

% \vspace{1ex}

% (b) The cohomology class $-\mathfrak{c}\in H^{1,\,1}_{BC}(X,\,\R)$ is {\bf not $[\omega]$-$m$-pseudo-effective}.

\vspace{1ex}

 For every holomorphic line bundle $L$ over $X$, the following two statements are equivalent:

  \vspace{1ex}

 (a) There exist a $C^\infty$ fibre metric $h$ on $L$ and a form $\Omega_0\in C^\infty_{n-m,\,n-m}(X,\,\R)$ satisfying (\ref{eqn:m-pos_equiv_Omega-prop}) such that $i\Theta_h(L)\wedge\omega^{m-1}\wedge\Omega_0 >0$ on $X$;

 \vspace{1ex}

 (b) The dual line bundle $L^{-1}$ is {\bf not $[\omega]$-$m$-pseudo-effective}.

\end{The}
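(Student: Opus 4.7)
The plan is to derive Theorem \ref{The:m-pos_equiv} from two ingredients already in hand: the duality Lemma \ref{Lem:duality_generalised-Lamari} in bidegree $(m,m)$, and the $L^2_\omega$-orthogonal decomposition of Corollary \ref{Cor:2-space-decomposition}. The implication (a)$\Rightarrow$(b) is a short pairing argument. The reverse implication (b)$\Rightarrow$(a) is where the real work lies, and the step I expect to be the main obstacle is the upgrade from the \emph{integral} strict positivity supplied by duality to the \emph{pointwise} strict positivity of $i\Theta_h(L)\wedge\omega^{m-1}\wedge\Omega_0$; this will be achieved by perturbing $h$ by a conformal factor $e^{-2\pi\varphi}$, with $\varphi$ produced by Corollary \ref{Cor:2-space-decomposition}.

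For (a)$\Rightarrow$(b), I argue by contradiction. Fix the metric $h$ and form $\Omega_0$ supplied by (a). Since the $[\omega]$-$m$-psef property of $L^{-1}$ is insensitive to smooth changes of the background metric (any such change being absorbable into the current $S$ thanks to $d\omega=0$), if $L^{-1}$ were $[\omega]$-$m$-psef there would exist a real $(m-1,m-1)$-current $S$ with
\begin{eqnarray*}
-\frac{i}{2\pi}\Theta_h(L)\wedge\omega_{m-1}+i\partial\bar\partial S\geq 0\quad\text{strongly on }X.
\end{eqnarray*}
Wedging with $\Omega_0$ and integrating, the contribution $\int_X i\partial\bar\partial S\wedge\Omega_0$ vanishes by Stokes and the hypothesis $\partial\bar\partial\Omega_0=0$, leaving $-\int_X\frac{i}{2\pi}\Theta_h(L)\wedge\omega_{m-1}\wedge\Omega_0\geq 0$. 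But (a) makes the integrand strictly positive pointwise, so its integral is strictly positive, a contradiction.

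For (b)$\Rightarrow$(a), start with any $C^\infty$ Hermitian metric $h$ on $L$ and apply Lemma \ref{Lem:duality_generalised-Lamari} to $\theta:=-\frac{i}{2\pi}\Theta_h(L)\wedge\omega_{m-1}\in C^\infty_{m,m}(X,\R)$. Hypothesis (b) is exactly the failure of condition (i) of that lemma for $\theta$, so condition (ii) also fails: there exists $\Omega\in C^\infty_{n-m,n-m}(X,\R)$ with $\Omega>0$ weakly, $\partial\bar\partial\Omega=0$ and $\int_X\theta\wedge\Omega<0$. Equivalently, the smooth function $f:=\frac{i\Theta_h(L)\wedge\omega^{m-1}\wedge\Omega}{2\pi\,dV_\omega}$ satisfies $\int_X f\,dV_\omega>0$. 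The decisive step is to invoke Corollary \ref{Cor:2-space-decomposition}, which produces a decomposition $f=c+P_{\omega,\Omega}(\varphi)$ with $\varphi\in C^\infty(X,\R)$ and forces $c=\big(\int_X dV_\omega\big)^{-1}\int_X f\,dV_\omega>0$. Unwinding the definition of $P_{\omega,\Omega}$, this identity reads
\begin{eqnarray*}
\Big(\tfrac{i}{2\pi}\Theta_h(L)+i\partial\bar\partial\varphi\Big)\wedge\omega^{m-1}\wedge\Omega=c\,dV_\omega>0\quad\text{everywhere on }X.
\end{eqnarray*}
Replacing $h$ by $h':=h\,e^{-2\pi\varphi}$ changes $i\Theta_h(L)$ by $+2\pi\,i\partial\bar\partial\varphi$, so setting $\Omega_0:=\Omega$ yields (a) for the new smooth metric $h'$. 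The only delicate point in this last step is the applicability of the decomposition to our specific $f$, but this is automatic from Corollary \ref{Cor:2-space-decomposition}, which applies to every smooth real function on the compact Kähler manifold $X$ via ellipticity of $P^\star_{\omega,\Omega}$ and the strict maximum principle.
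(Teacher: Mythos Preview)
Your proof is correct and follows essentially the same approach as the paper's: both directions rest on the duality Lemma \ref{Lem:duality_generalised-Lamari}, and for (b)$\Rightarrow$(a) the integral positivity supplied by that lemma is upgraded to pointwise positivity by solving the linear equation $P_{\omega,\Omega}(\varphi)=f-c$ via the decomposition of Corollary \ref{Cor:2-space-decomposition}, then absorbing $i\partial\bar\partial\varphi$ into the fibre metric. The paper phrases the latter step as finding $f$ with $P_{\omega,\Omega_0}(f)=g_{\alpha_0,\omega,\Omega_0}-I_0/\mathrm{Vol}_\omega(X)$, which is exactly your decomposition with different notation.
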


\begin{proof} Since Theorem \ref{The:m-pos_equiv} is the special case of Theorem \ref{The:m-pos_equiv_introd} for integral cohomology classes $\mathfrak{c}$, it suffices to prove the latter.

  \vspace{1ex}

  $(a)\implies(b)$ Suppose there exist $\alpha\in\mathfrak{c}$ and $\Omega_0\in C^\infty_{n-m,\,n-m}(X,\,\R)$ with the properties under (a). Since the $(n,\,n)$-form $\alpha\wedge\omega^{m-1}\wedge\Omega_0$ is positive at every point, we have: \begin{eqnarray*}\int\limits_X\alpha\wedge\omega^{m-1}\wedge\Omega_0 >0.\end{eqnarray*}

  Reasoning by contradiction, suppose that (b) does not hold. Then, the class $-\mathfrak{c}$ is $[\omega]$-$m$-pseudo-effective. By the duality Lemma \ref{Lem:duality_generalised-Lamari} (reworded as $(3)$ of Proposition and Definition \ref{Prop-Def:F_m-G_m_cones}), this is equivalent to \begin{eqnarray*}-\int\limits_X\alpha\wedge\omega^{m-1}\wedge\Omega \geq 0\end{eqnarray*} for every $\Omega\in C_{n-m,\,n-m}^\infty(X,\,\R)$ satisfying properties (\ref{eqn:m-pos_equiv_Omega-prop}). Taking $\Omega = \Omega_0$, we get a contradiction.

\vspace{1ex}

$(b)\implies(a)$ Suppose that $-\mathfrak{c}\in H^{1,\,1}_{BC}(X,\,\R)$ is not $[\omega]$-$m$-pseudo-effective. By the duality Lemma \ref{Lem:duality_generalised-Lamari} (reworded as $(3)$ of Proposition and Definition \ref{Prop-Def:F_m-G_m_cones}), this is equivalent to there existing a form $\Omega_0\in C_{n-m,\,n-m}^\infty(X,\,\R)$ satisfying properties (\ref{eqn:m-pos_equiv_Omega-prop}) such that \begin{eqnarray*}-\int\limits_X\alpha\wedge\omega^{m-1}\wedge\Omega_0 < 0\end{eqnarray*} for some (hence any) $C^\infty$ $d$-closed real $(1,\,1)$-form $\alpha\in\mathfrak{c}$. (Note that a change of representative of the class $\mathfrak{c}$ from $\alpha$ to $\alpha + \partial\bar\partial\zeta$ would replace $\alpha\wedge\omega^{m-1}$ by $\alpha\wedge\omega^{m-1} + \partial\bar\partial(\zeta\wedge\omega^{m-1})$, since $\omega$ is K\"ahler, and the above integral would remain unchanged thanks to the Stokes theorem and to $\partial\bar\partial\Omega_0=0$.)

  Fix an arbitrary $C^\infty$ $d$-closed real $(1,\,1)$-form $\alpha_0\in\mathfrak{c}$ and consider the positive constant \begin{eqnarray*}I_0:=\int\limits_X\alpha_0\wedge\omega^{m-1}\wedge\Omega_0 > 0.\end{eqnarray*}

  To prove (a), we will prove the existence of a $C^\infty$ function $f:X\longrightarrow\R$ such that the form $\alpha:=\alpha_0 + i\partial\bar\partial f\in\mathfrak{c}$ satisfies the condition $\alpha\wedge\omega^{m-1}\wedge\Omega_0 >0$ everywhere on $X$. This condition is equivalent to \begin{eqnarray*}\frac{\alpha_0\wedge\omega^{m-1}\wedge\Omega_0}{dV_\omega} + \frac{i\partial\bar\partial f\wedge\omega^{m-1}\wedge\Omega_0}{dV_\omega} > 0,\end{eqnarray*} namely to the $C^\infty$ function $g_{\alpha_0,\,\omega,\,\Omega_0} - P_{\omega,\,\Omega_0}(f)$ being positive at every point of $X$, where we have set $g_{\alpha_0,\,\omega,\,\Omega_0}:=(\alpha_0\wedge\omega^{m-1}\wedge\Omega_0)/dV_\omega$, an $\R$-valued $C^\infty$ function on $X$, while $P_{\omega,\,\Omega_0}:C^\infty(X,\,\R)\longrightarrow C^\infty(X,\,\R)$ is the operator introduced in Definition \ref{Def:P_omega_Omega}.

  For this to happen, it suffices to prove the existence of a $C^\infty$ function $f:X\longrightarrow\R$ such that the $C^\infty$ function $g_{\alpha_0,\,\omega,\,\Omega_0} - P_{\omega,\,\Omega_0}(f)$ is constant, equal to the positive constant $I_0/\mbox{Vol}_\omega(X)$, where $\mbox{Vol}_\omega(X):=\int_XdV_\omega$. Thus, it suffices to prove the existence of a $C^\infty$ function $f:X\longrightarrow\R$ such that \begin{eqnarray*}P_{\omega,\,\Omega_0}(f) = g_{\alpha_0,\,\omega,\,\Omega_0} - \frac{I_0}{\mbox{Vol}_\omega(X)}   \hspace{5ex}\mbox{everywhere on}\hspace{1ex} X.\end{eqnarray*}

  Such a function $f$ exists if and only if the function $g_{\alpha_0,\,\omega,\,\Omega_0} - I_0/\mbox{Vol}_\omega(X)$ lies in the image of $P_{\omega,\,\Omega_0}$. This is further equivalent, thanks to the first two-space $L^2_\omega$-orthogonal decomposition (\ref{eqn:2-space-decomposition}), to the function $g_{\alpha_0,\,\omega,\,\Omega_0} - I_0/\mbox{Vol}_\omega(X)$ being $L^2_\omega$-orthogonal to the real constants, namely to the first equality in the following sequence of equivalences: \begin{eqnarray*}\int\limits_X\bigg(g_{\alpha_0,\,\omega,\,\Omega_0} - \frac{I_0}{\mbox{Vol}_\omega(X)}\bigg)\,dV_\omega = 0 \iff \int\limits_X\alpha_0\wedge\omega^{m-1}\wedge\Omega_0 - \frac{I_0}{\mbox{Vol}_\omega(X)}\,\int\limits_XdV_\omega = 0 \iff I_0 - I_0 = 0,\end{eqnarray*} where the last equality, which is obviously true, followed from the definition of $I_0$.

\end{proof}

\section{A Monge-Amp\`ere-type PDE for positive degree solutions}\label{section:equation} In this section, we study the equation of Definition \ref{Def:equation} and prove the uniqueness Theorem \ref{The:equation_uniqueness_introd}.

A word about the notation that will be used throughout the rest of the paper: for any Hermitian metric $\gamma$ on $X$, $\star_\gamma:\Lambda^{p,\,q}T^\star X\longrightarrow\Lambda^{n-q,\,n-p}T^\star X$ denotes the associated Hodge star operator defined, in any bidegree $(p,\,q)$ and at every point of $X$, by the requirement: \begin{eqnarray*}u\wedge\star_\gamma\bar{v} = \langle u,\,v\rangle_\gamma\,dV_\gamma
\end{eqnarray*} for all $(p,\,q)$-forms $u,v$, where $dV_\gamma :=\gamma^n/n!$ is the volume form and $\langle\,\cdot\,,\,\cdot\,\rangle_\gamma$ is the pointwise inner product induced by $\gamma$. We also let, for every $p\in\{1,\dots , n\}$: \begin{eqnarray*}\gamma_p:=\frac{\gamma^p}{p!}.\end{eqnarray*}

  Note that in (\ref{eqn:equation}) we have: $(\alpha + i\partial\bar\partial u)\wedge\omega_{n-m-1}\in C^\infty_{n-1,\,n-1}(X,\,\R)$, hence \begin{eqnarray*}\star_\omega\bigg((\alpha + i\partial\bar\partial u)\wedge\omega_{n-m-1}\bigg)\in C^\infty_{1,\,1}(X,\,\R),\end{eqnarray*} so the $n^{th}$ power of this $(1,\,1)$-form is a volume form on $X$ that is prescribed by $dV$ under (\ref{eqn:equation}).

\vspace{1ex}

  Before going further, we observe the following fact that will come in handy.

  \begin{Lem}\label{Lem:Laplacians_link} Let $\rho>0$ be a Hermitian metric on a complex manifold $X$ with $\mbox{dim}_\C X = n$. Then, for all $p,q\in\{0,\dots , n\}$ and every $u\in C^\infty_{p,\,q}(X,\,\C)$ such that $\bar\partial_\rho^\star u =0$, the following formula holds:  \begin{eqnarray}\label{eqn:Laplacians_link_Hermitian}\Lambda_\rho(i\partial\bar\partial u) = i\partial\bar\partial\Lambda_\rho u - \Delta''_\rho u + \partial\partial_\rho^\star u + \partial\tau_\rho^\star u - \bar\tau_\rho^\star\bar\partial u,\end{eqnarray} where $\Delta''_\rho: = \bar\partial\bar\partial_\tau^\star + \bar\partial_\tau^\star\bar\partial$ is the $\bar\partial$-Laplacian induced by $\rho$ and $\tau_\rho:= [\Lambda_\rho,\,\partial\rho\wedge\cdot\,]$ is the torsion operator associated with $\rho$.

    In particular, for every {\bf K\"ahler} metric $\omega$ (if any) on $X$ and every $u\in C^\infty_{p,\,q}(X,\,\C)$ such that $u\in\ker\partial^\star_\omega\cap\ker\bar\partial^\star_\omega$, the following formulae hold:  \begin{eqnarray}\label{eqn:Laplacians_link_Kaehler}\Lambda_\omega(i\partial\bar\partial u) & = & i\partial\bar\partial\Lambda_\omega u - \Delta''_\omega u \\
 \label{eqn:Lambda-l-iddbar-u} \Lambda_\omega^l(i\partial\bar\partial u) & = & i\partial\bar\partial\Lambda_\omega^l u - l\,\Delta''_\omega\Lambda_\omega^{l-1} u, \hspace{6ex} l=1,\dots , m.  \end{eqnarray}

\end{Lem}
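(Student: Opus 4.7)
The plan is to obtain \eqref{eqn:Laplacians_link_Hermitian} from two successive applications of Demailly's Hermitian version of the Kähler commutation identities, to then specialize to the Kähler case, and finally to iterate. For any Hermitian metric $\rho$ on $X$, these identities read
\[
[\Lambda_\rho,\,\partial]=i(\bar\partial^\star_\rho+\bar\tau^\star_\rho),\qquad [\Lambda_\rho,\,\bar\partial]=-i(\partial^\star_\rho+\tau^\star_\rho),
\]
where $\tau_\rho=[\Lambda_\rho,(\partial\rho)\wedge\cdot\,]$ is the torsion of $\rho$. Applying the second identity to $u$ gives $\Lambda_\rho\bar\partial u=\bar\partial\Lambda_\rho u-i(\partial^\star_\rho+\tau^\star_\rho)u$, while applying the first to $\bar\partial u$ gives $\Lambda_\rho\partial\bar\partial u=\partial\Lambda_\rho\bar\partial u+i(\bar\partial^\star_\rho+\bar\tau^\star_\rho)\bar\partial u$. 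Substituting the former into the latter, multiplying through by $i$, and invoking the hypothesis $\bar\partial^\star_\rho u=0$ to identify $\bar\partial^\star_\rho\bar\partial u$ with $\Delta''_\rho u$ produces \eqref{eqn:Laplacians_link_Hermitian} on the nose. The hypothesis is used only for this last identification; the Hermitian correction terms $\partial\partial^\star_\rho u$, $\partial\tau^\star_\rho u$, and $\bar\tau^\star_\rho\bar\partial u$ survive untouched.

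The Kähler reduction \eqref{eqn:Laplacians_link_Kaehler} is then immediate: the torsion $\tau_\omega=[\Lambda_\omega,(\partial\omega)\wedge\cdot\,]$ and its conjugate vanish identically since $\partial\omega=0$, and the additional assumption $\partial^\star_\omega u=0$ eliminates the remaining $\partial\partial^\star_\omega u$ term.

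For the iterated formula \eqref{eqn:Lambda-l-iddbar-u} I argue by induction on $l$; the base case $l=1$ is \eqref{eqn:Laplacians_link_Kaehler}. For the inductive step I apply $\Lambda_\omega$ to the identity at level $l-1$. Two auxiliary facts, both consequences of $d\omega=0$, are needed: first, that $\Lambda_\omega^{l-1}u$ still belongs to $\ker\partial^\star_\omega\cap\ker\bar\partial^\star_\omega$, which allows reapplying \eqref{eqn:Laplacians_link_Kaehler} directly to $\Lambda_\omega^{l-1}u$ and second, that $[\Lambda_\omega,\Delta''_\omega]=0$, which lets me fold one $\Lambda_\omega$ past the Laplacian appearing in the inductive hypothesis. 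The former follows from $[\Lambda_\omega,\partial^\star_\omega]=[\Lambda_\omega,\bar\partial^\star_\omega]=0$, obtained by taking $L^2_\omega$-adjoints of $[L_\omega,\partial]=[L_\omega,\bar\partial]=0$; the latter is the classical Kähler commutation of Lefschetz operators with the Hodge Laplacian (via $\Delta''_\omega=\tfrac12\Delta_\omega$). Combining, the fresh $\Delta''_\omega\Lambda_\omega^{l-1}u$ generated by \eqref{eqn:Laplacians_link_Kaehler} adds to the preserved $(l-1)\Delta''_\omega\Lambda_\omega^{l-1}u$ to produce the coefficient $l$ in \eqref{eqn:Lambda-l-iddbar-u}. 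There is no conceptual obstacle beyond careful bookkeeping of the Hermitian torsion terms in the first step and of the commutations just listed in the iteration.
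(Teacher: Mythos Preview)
Your proof is correct and follows essentially the same route as the paper: two successive applications of the Demailly--Hermitian commutation identities (first $[\Lambda_\rho,\partial]$ on $\bar\partial u$, then $[\Lambda_\rho,\bar\partial]$ on $u$), the hypothesis $\bar\partial^\star_\rho u=0$ used solely to rewrite $\bar\partial^\star_\rho\bar\partial u$ as $\Delta''_\rho u$, the K\"ahler specialization by killing torsion and $\partial\partial^\star_\omega u$, and the induction on $l$ using $[\Lambda_\omega,\Delta''_\omega]=0$ together with $\Lambda_\omega^{l-1}u\in\ker\partial^\star_\omega\cap\ker\bar\partial^\star_\omega$. The justifications you give for the two auxiliary commutations in the inductive step are the same as the paper's.
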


  \noindent {\it Proof.} Recall the following standard commutation relations ([Dem84], see also [Dem97, VII, $\S.1$]) that hold for any Hermitian metric $\rho$ on any complex manifold: \begin{eqnarray}\label{eqn:standard-comm-rel} (i)\,\,[\Lambda_\rho,\,\bar\partial] = -i(\partial^\star_\rho + \tau^\star_\rho);  \hspace{3ex} (ii)\,\,[\Lambda_\rho,\,\partial] = i(\bar\partial^\star_\rho + \bar\tau^\star_\rho).\end{eqnarray}

  Using first (ii) to commute (with an error term) $\Lambda_\rho$ with $\partial$ and then (i) to commute (with an error term) $\Lambda_\rho$ with $\bar\partial$, we successively get for every $u\in C^\infty_{p,\,q}(X,\,\C)$ such that $\bar\partial_\rho^\star u =0$: \begin{eqnarray*}\Lambda_\rho(\partial\bar\partial u) = \partial\Lambda_\rho(\bar\partial u) + i\bar\partial^\star_\rho\bar\partial u  + i\bar\tau^\star_\rho\bar\partial u = \partial\bar\partial\Lambda_\rho u -i\partial\partial^\star_\rho u -i\partial\tau^\star_\rho u + i\,\Delta''_\rho u + i\bar\tau^\star_\rho\bar\partial u.\end{eqnarray*} This implies (\ref{eqn:Laplacians_link_Hermitian}) after multiplication by $i$. For the second equality above, we used the hypothesis $\bar\partial_\rho^\star u =0$ to get: $\Delta''_\rho u = \bar\partial\bar\partial^\star_\rho u + \bar\partial^\star_\rho\bar\partial u = \bar\partial^\star_\rho\bar\partial u$.

  When $\rho=\omega$ is a K\"ahler metric, the torsion operator $\tau_\omega$ vanishes identically, hence so do its adjoint and its conjugate. If, moreover, we assume that $\partial^\star_\rho u = 0$, (\ref{eqn:Laplacians_link_Hermitian}) becomes (\ref{eqn:Laplacians_link_Kaehler}).

  Finally, note that (\ref{eqn:Lambda-l-iddbar-u}) for $l=1$ is precisely (\ref{eqn:Laplacians_link_Kaehler}). We will prove that it holds for every $l$ by induction. Supposing that (\ref{eqn:Lambda-l-iddbar-u}) holds for $l$ and taking $\Lambda_\omega$, we get the first equality below: \begin{eqnarray*}\Lambda_\omega^{l+1}(i\partial\bar\partial u) & = & \Lambda_\omega(i\partial\bar\partial\Lambda_\omega^l u) - l\,\Lambda_\omega(\Delta''_\omega\Lambda_\omega^{l-1} u) = i\partial\bar\partial\Lambda_\omega^{l+1} u - \Delta''_\omega\Lambda_\omega^l u - l\,\Delta''_\omega\Lambda_\omega^l u \\
  & = & i\partial\bar\partial\Lambda_\omega^{l+1} u - (l+1)\, \Delta''_\omega\Lambda_\omega^l u,\end{eqnarray*} where the second equality followed by applying (\ref{eqn:Laplacians_link_Kaehler}) with $\Lambda_\omega^l u$ in place of $u$ and by commuting $\Lambda_\omega$ with $\Delta''_\omega$. (This commutation holds because $\omega$ is K\"ahler. Indeed, the K\"ahler commutation relations imply that $L_\omega$ and $\Delta''_\omega$ commute, hence so do their adjoints, while $\Delta''_\omega$ is self-adjoint.) Note that $\partial^\star_\omega(\Lambda_\omega^l u) = \Lambda_\omega^l(\partial^\star_\omega u) = 0$ and $\bar\partial^\star_\omega(\Lambda_\omega^l u) = \Lambda_\omega^l(\bar\partial^\star_\omega u) = 0$ (hence (\ref{eqn:Laplacians_link_Kaehler}) applies in the above situation) since $\partial$ and $\bar\partial$ commute with $L_\omega$ (hence $\partial^\star_\omega$ and $\bar\partial^\star_\omega$ commute with $\Lambda_\omega$) thanks to $\omega$ being K\"ahler.   \hfill $\Box$

\vspace{2ex}

On the other hand, recall that any differential form has a (uniquely determined) {\it Lefschetz decomposition} with respect to any given Hermitian metric $\omega$. In our case, we will use this for forms of bidegree $(r,\,r)$ with $r$ arbitrary. Explicitly, on any compact $n$-dimensional Hermitian manifold $(X,\,\omega)$, for every form $\zeta\in C^\infty_{r,\,r}(X,\,\C)$ there exist unique {\it primitive} (with respect to $\omega$, also called {\it $\omega$-primitive} to avoid confusion) forms $\zeta_{prim}^{(l)}\in C^\infty_{r-l,\,r-l}(X,\,\C)$ for $l=0,\dots , r$ such that \begin{eqnarray}\label{eqn:Lefschetz-decomp}\zeta = \zeta_{prim}^{(0)} + \zeta_{prim}^{(1)}\wedge\omega + \dots + \zeta_{prim}^{(l)}\wedge\omega^l + \dots + \zeta_{prim}^{(r)}\,\omega^r = \sum\limits_{l=0}^r L_\omega^l(\zeta_{prim}^{(l)}),\end{eqnarray} where $L_\omega:\Lambda^{p,\,q}T^\star X\longrightarrow\Lambda^{p+1,\,q+1}T^\star X$ is the Lefschetz operator of multiplication by $\omega$. Recall that a $(p,\,q)$-form $v$ is said to be $\omega$-primitive if $p+q\leq n$ and $v\wedge\omega^{n-p-q+1} = 0$. This condition is equivalent to $p+q\leq n$ and $\Lambda_\omega v = 0$. The Lefschetz decomposition (\ref{eqn:Lefschetz-decomp}) shows that every $\zeta\in C^\infty_{r,\,r}(X,\,\C)$ is completely determined by its primitive components (that can be called the {\it $\omega$-primitive coordinates}), so we can write $\zeta=(\zeta_{prim}^{(l)})_{0\leq l\leq r}$.

Another preliminary observation that will be needed is the following

\begin{Lem}\label{Lem:star_product_zeta_omega-powers} Let $(X,\,\omega)$ be a complex Hermitian manifold with $\mbox{dim}_\C X = n$.

\vspace{1ex}

$(1)$\, Let $r\in\{1,\dots , n\}$ be such that $2r\leq n$. Fix an arbitrary $\zeta\in C^\infty_{r,\,r}(X,\,\C)$. Considering the Lefschetz decomposition (\ref{eqn:Lefschetz-decomp}) of $\zeta$, we have: \begin{eqnarray}\label{eqn:star_product_zeta_omega-powers}\star_\omega\bigg(\zeta\wedge\omega_{n-r-1}\bigg) = -\frac{(n-2)!}{(n-r-1)!}\,\zeta_{prim}^{(r-1)} + \frac{(n-1)!}{(n-r-1)!}\,\zeta_{prim}^{(r)}\,\omega,\end{eqnarray} where the forms on the right are given by the formulae: \begin{eqnarray}\label{eqn:star_Lefschetz_computed-components_r-1}\zeta_{prim}^{(r-1)} & = & \frac{(n-r-1)!}{(n-2)!\,(r-1)!}\,\bigg(\Lambda_\omega^{r-1}\zeta - \frac{1}{n}\,(\Lambda_\omega^r\zeta)\,\omega\bigg) \\
 \label{eqn:star_Lefschetz_computed-components_r}   \zeta_{prim}^{(r)} & = & \frac{(n-r)!}{n!\,r!}\,\Lambda_\omega^r\zeta.\end{eqnarray}

This transforms (\ref{eqn:star_product_zeta_omega-powers}) to \begin{eqnarray}\label{eqn:star_product_zeta_omega-powers_final}\star_\omega\bigg(\zeta\wedge\omega_{n-r-1}\bigg) = \frac{1}{(r-1)!}\,\bigg(-\Lambda_\omega^{r-1}\zeta + \frac{1}{r}\,(\Lambda_\omega^r\zeta)\,\omega\bigg).\end{eqnarray}

\vspace{1ex}

$(2)$\, (Standard) Suppose that the metric $\omega$ is {\bf K\"ahler}. Then, for every bidegree $(p,\,q)$ such that $p+q\leq n$ and every $v\in C^2_{p,\,q}(X,\,\C)$, the following implication holds: \begin{eqnarray}\label{eqn:primitive_Delta''_implication}v \hspace{2ex}\mbox{is}\hspace{1ex}\omega\mbox{-primitive} \implies \Delta''_\omega v \hspace{2ex}\mbox{is}\hspace{1ex}\omega\mbox{-primitive}.\end{eqnarray}

In particular, if $v = \sum_{l\geq 0}v_{prim}^{(l)}\wedge\omega^l$ is the Lefschetz decomposition of $v$, then $\Delta''_\omega v = \sum_{l\geq 0}\Delta''_\omega v_{prim}^{(l)}\wedge\omega^l$ is the Lefschetz decomposition of $\Delta''_\omega v$. Moreover, the following equivalence holds: \begin{eqnarray}\label{eqn:harmonicity_form_prim-coordinates_equivalence}\Delta''_\omega v = 0 \iff \Delta''_\omega v_{prim}^{(l)} = 0 \hspace{2ex}\mbox{for all}\hspace{1ex} l.\end{eqnarray}

\vspace{1ex}

$(3)$\,  Suppose that the metric $\omega$ is {\bf K\"ahler}. Then, for every $m\in\{1,\dots , n-1\}$ and every $\alpha\in C^\infty_{m,\,m}(X,\,\C)$, the following implication holds: \begin{eqnarray}\label{eqn:harmonicity_alpha_alpha-wedge-omega-power_implication}\Delta''_\omega\alpha = 0  \implies \Delta''_\omega\bigg(\star_\omega(\alpha\wedge\omega_{n-m-1})\bigg) = 0.\end{eqnarray} In particular, $\star_\omega(\alpha\wedge\omega_{n-m-1})\in\ker d$ whenever $\alpha\in\ker\Delta''_\omega$.

\end{Lem}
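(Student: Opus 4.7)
The lemma has three parts; I will address them in order, since (3) is a corollary of (1), while (2) is an independent standard consequence of the K\"ahler commutation relations.

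\medskip

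For part (1), my plan is to substitute the Lefschetz decomposition $\zeta = \sum_{l=0}^{r} L_\omega^l(\zeta_{prim}^{(l)})$ into $\zeta\wedge\omega_{n-r-1}$ and to observe, from the defining property $v\wedge\omega^{n-(p+q)+1}=0$ of a primitive $(p,q)$-form, that the summands with $l<r-1$ vanish. Only the indices $l=r-1$ and $l=r$ survive, and the classical Weil formula for $\star_\omega$ on $L_\omega^k(\text{primitive})$ computes each of the two remaining contributions explicitly, yielding (\ref{eqn:star_product_zeta_omega-powers}) with the stated numerical constants. To derive (\ref{eqn:star_Lefschetz_computed-components_r}) and (\ref{eqn:star_Lefschetz_computed-components_r-1}), I will apply $\Lambda_\omega^r$ and $\Lambda_\omega^{r-1}$ to the same Lefschetz decomposition and use the standard $\mathfrak{sl}_2$-type identity
$$\Lambda_\omega^j\, L_\omega^k v \;=\; \frac{k!}{(k-j)!}\,\frac{(n-(p+q)-k+j)!}{(n-(p+q)-k)!}\, L_\omega^{k-j} v \qquad (v \text{ primitive of bidegree } (p,q),\ k\geq j).$$
Then $\Lambda_\omega^r\zeta$ sees only the $l=r$ term (lower $l$'s are annihilated once the $\Lambda_\omega$'s have consumed all the available $L_\omega$'s and hit primitivity), while $\Lambda_\omega^{r-1}\zeta$ sees the two terms $l=r-1$ and $l=r$. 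Inverting this triangular $2\times 2$ system recovers $\zeta_{prim}^{(r)}$ and then $\zeta_{prim}^{(r-1)}$. Substituting both back into (\ref{eqn:star_product_zeta_omega-powers}) and cancelling factorials produces the clean formula (\ref{eqn:star_product_zeta_omega-powers_final}).

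\medskip

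For part (2), the K\"ahler commutation relations give $[\Delta''_\omega,\,L_\omega]=0$ and $[\Delta''_\omega,\,\Lambda_\omega]=0$. The first shows that $\Delta''_\omega$ commutes through every power of $L_\omega$, so applying $\Delta''_\omega$ term by term to a Lefschetz decomposition produces another such decomposition. The second, together with preservation of bidegree, shows that $\Delta''_\omega$ preserves primitivity: if $\Lambda_\omega v = 0$ with $\deg v\leq n$, then $\Lambda_\omega\Delta''_\omega v = \Delta''_\omega\Lambda_\omega v = 0$. Hence $\sum_l L_\omega^l(\Delta''_\omega v_{prim}^{(l)})$ is \emph{the} Lefschetz decomposition of $\Delta''_\omega v$ by the uniqueness of such decompositions, and (\ref{eqn:harmonicity_form_prim-coordinates_equivalence}) is an immediate consequence of that same uniqueness.

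\medskip

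Part (3) is then a direct consequence of (1) and the commutations used in (2). Formula (\ref{eqn:star_product_zeta_omega-powers_final}) with $r=m$ and $\zeta=\alpha$ expresses $\star_\omega(\alpha\wedge\omega_{n-m-1})$ as a linear combination of $\Lambda_\omega^{m-1}\alpha$ and $L_\omega\Lambda_\omega^m\alpha$, each of which lies in $\ker\Delta''_\omega$ whenever $\alpha$ does, since $\Delta''_\omega$ commutes with both $\Lambda_\omega$ and $L_\omega$ on the K\"ahler manifold. This yields (\ref{eqn:harmonicity_alpha_alpha-wedge-omega-power_implication}). The final assertion that $\star_\omega(\alpha\wedge\omega_{n-m-1})\in\ker d$ follows from the standard identity $\Delta''_\omega=\tfrac{1}{2}\Delta_\omega$ on a compact K\"ahler manifold, which forces $\Delta''_\omega$-harmonic forms to be $d$-harmonic and in particular $d$-closed.

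\medskip

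I expect the main obstacle to sit in part (1). The structural vanishing argument that leaves only two terms is clean, but the constants must be tracked along two separate routes -- Weil's formula applied directly to the two surviving summands on one hand, and inversion of the $\Lambda_\omega^{r-1},\,\Lambda_\omega^r$ system via the $\mathfrak{sl}_2$-identity on the other -- and these two paths must meet at the stated formulae. Parts (2) and (3) are essentially formal once (1) is in place.
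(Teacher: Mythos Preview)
Your proposal is correct and follows essentially the same route as the paper: the vanishing of the summands with $l<r-1$, the use of the Weil formula for $\star_\omega$ on primitive forms, and the K\"ahler commutations $[\Delta''_\omega,L_\omega]=[\Delta''_\omega,\Lambda_\omega]=0$ are exactly what the paper invokes. The only cosmetic difference is that the paper derives the constants in (\ref{eqn:star_Lefschetz_computed-components_r-1})--(\ref{eqn:star_Lefschetz_computed-components_r}) by iterating the single-step commutator $[L_\omega^r,\Lambda_\omega]=r(k-n+r-1)L_\omega^{r-1}$ rather than quoting the closed $\mathfrak{sl}_2$-identity for $\Lambda_\omega^jL_\omega^k$ on primitive forms, and in part~(3) it argues via (\ref{eqn:star_product_zeta_omega-powers}) and the primitive components $\alpha_{prim}^{(m-1)},\alpha_{prim}^{(m)}$ rather than via (\ref{eqn:star_product_zeta_omega-powers_final}) and $\Lambda_\omega^{m-1}\alpha,\Lambda_\omega^m\alpha$; these are equivalent presentations of the same computation.
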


\noindent {\it Proof.} $(1)$\, Multiplying (\ref{eqn:Lefschetz-decomp}) by $\omega_{n-r-1}$, we get: \begin{eqnarray*}\zeta\wedge\omega_{n-r-1} = \sum\limits_{l=0}^r\zeta_{prim}^{(l)}\wedge\frac{\omega^{n-r+l-1}}{(n-r-1)!} = \zeta_{prim}^{(r-1)}\wedge\frac{\omega^{n-2}}{(n-r-1)!} + \zeta_{prim}^{(r)}\,\frac{\omega^{n-1}}{(n-r-1)!}.\end{eqnarray*} The reason why only the terms corresponding to $l=r-1$ and $l=r$ from the first sum survive into the second is that the total degree of each $\zeta_{prim}^{(l)}$ is $2r-2l$, so the primitivity of this form implies that $\zeta_{prim}^{(l)}\wedge\omega^{n-r+l-1} = 0$ as soon as $n-(2r-2l)<n-r+l-1$. This last inequality amounts to $l<r-1$.

Taking $\star_\omega$, we get: \begin{eqnarray*}\star_\omega\bigg(\zeta\wedge\omega_{n-r-1}\bigg) & = & \frac{(n-2)!}{(n-r-1)!}\,\star_\omega\bigg(\zeta_{prim}^{(r-1)}\wedge\omega_{n-2}\bigg) + \frac{(n-1)!}{(n-r-1)!}\,\star_\omega\bigg(\zeta_{prim}^{(r)}\,\omega_{n-1}\bigg) \\
  & = & -\frac{(n-2)!}{(n-r-1)!}\,\zeta_{prim}^{(r-1)} + \frac{(n-1)!}{(n-r-1)!}\, \zeta_{prim}^{(r)}\,\omega,\end{eqnarray*} where for the last equality (which is precisely (\ref{eqn:star_product_zeta_omega-powers})) we used the following standard formula (cf. e.g. [Voi02, Proposition 6.29, p. 150]) giving the image under $\star_\omega$ of any {\it $\omega$-primitive} form $v$ of any bidegree $(p, \, q)$: \begin{eqnarray}\label{eqn:prim-form-star-formula-gen}\star_\omega\, v = (-1)^{k(k+1)/2}\, i^{p-q}\, \frac{\omega^{n-p-q}\wedge v}{(n-p-q)!}, \hspace{2ex} \mbox{where}\,\, k:=p+q.\end{eqnarray} In our case, we applied this formula to the $(1,\,1)$-form $v:=\zeta_{prim}^{(r-1)}$ and then used the fact that $\star_\omega\star_\omega$ is the identity operator on forms of even degree. Meanwhile, $\zeta_{prim}^{(r)}$ is of bidegree $(0,\,0)$ (i.e. a function), so it commutes with $\star_\omega$, while $\star_\omega\omega_{n-1} = \omega$.

\vspace{1ex}

In order to compute $\zeta_{prim}^{(r-1)}$ and $\zeta_{prim}^{(r)}$ and prove formulae (\ref{eqn:star_Lefschetz_computed-components_r-1}) and (\ref{eqn:star_Lefschetz_computed-components_r}), we will use the following standard commutation formula (cf. [Dem97, Chapter VI, $\S5.2.$]) that holds in every degree $k$ on every $n$-dimensional complex manifold and for every integer $r\geq 1$: \begin{eqnarray}\label{eqn:commutation_Lr-Lambda}[L_\omega^r,\,\Lambda_\omega] = r(k-n+r-1)\,L_\omega^{r-1} \hspace{6ex}\mbox{on}\hspace{2ex} k\mbox{-forms}.\end{eqnarray}

\vspace{1ex}

Taking $\Lambda_\omega$ in (\ref{eqn:Lefschetz-decomp}), we get: \begin{eqnarray*}\Lambda_\omega\zeta = \sum\limits_{l=1}^r \bigg[\Lambda_\omega,\,L_\omega^l\bigg]\,(\zeta_{prim}^{(l)}) = -\sum\limits_{l=0}^r \bigg[L_\omega^l,\,\Lambda_\omega\bigg]\,(\zeta_{prim}^{(l)}) = -\sum\limits_{l=1}^r l(2r-l-n-1)\,L_\omega^{l-1}\,(\zeta_{prim}^{(l)}),\end{eqnarray*} where the first equality followed from $\Lambda_\omega\zeta_{prim}^{(l)} = 0$ for every $l$, while the third equality followed from (\ref{eqn:commutation_Lr-Lambda}). Taking $\Lambda_\omega$ again, we get: \begin{eqnarray*}\Lambda_\omega^2\zeta & = & -\sum\limits_{l=1}^r l(2r-l-n-1)\,\bigg[\Lambda_\omega,\,L_\omega^{l-1}\bigg]\,(\zeta_{prim}^{(l)}) = \sum\limits_{l=1}^r l(2r-l-n-1)\,\bigg[L_\omega^{l-1},\,\Lambda_\omega\bigg]\,(\zeta_{prim}^{(l)}) \\
  & = & \sum\limits_{l=2}^r l(l-1)(2r-l-n-1)(2r-l-n-2)\,L_\omega^{l-2}\,(\zeta_{prim}^{(l)}),\end{eqnarray*} where (\ref{eqn:commutation_Lr-Lambda}) was applied again to yield the last equality.

Iterating this process inductively, we get for every $s\in\{1,\dots , r\}$: \begin{eqnarray*}\Lambda_\omega^s\zeta = (-1)^s\,\sum\limits_{l=s}^r l(l-1)\cdots (l-s+1)\,(2r-l-n-1)(2r-l-n-2)\cdots (2r-l-n-s)\,L_\omega^{l-s}\,(\zeta_{prim}^{(l)}).\end{eqnarray*}

$\bullet$ In particular, when $s=r$, this spells: \begin{eqnarray*}\Lambda_\omega^r\zeta = (-1)^r\, r!\,(-1)^r\,(n+1-r)(n+2-r)\cdots(n+r-r)\,\zeta_{prim}^{(r)} = \frac{r!\,n!}{(n-r)!}\,\zeta_{prim}^{(r)}.\end{eqnarray*} This proves (\ref{eqn:star_Lefschetz_computed-components_r}).

\vspace{1ex}

$\bullet$ Meanwhile, in the particular case where $s=r-1$, the above formula for $\Lambda_\omega^s\zeta$ spells: \begin{eqnarray*}\Lambda_\omega^{r-1}\zeta & = & (-1)^{r-1}\, (r-1)(r-2)\cdots 1\,(r-n)(r-n-1)\cdots(2-n)\,\zeta_{prim}^{(r-1)} \\
  & + & (-1)^{r-1}\, r(r-1)\cdots 2\,(r-n-1)(r-n-2)\cdots(1-n)\,\zeta_{prim}^{(r)}\,\omega \\
  & = &  (r-1)!\,(n-r)(n-r+1)\cdots(n-2)\,\zeta_{prim}^{(r-1)} + r!\,(n-r+1)(n-r+2)\cdots(n-1)\,\zeta_{prim}^{(r)}\,\omega \\
 & = &  \frac{(r-1)!\,(n-2)!}{(n-r-1)!}\,\zeta_{prim}^{(r-1)} + \frac{r!\,(n-1)!}{(n-r)!}\,\zeta_{prim}^{(r)}\,\omega.\end{eqnarray*} Now, replacing $\zeta_{prim}^{(r)}$ with its value given in (\ref{eqn:star_Lefschetz_computed-components_r}) (and proved above), the last equality transforms to: \begin{eqnarray*}\Lambda_\omega^{r-1}\zeta = \frac{(r-1)!\,(n-2)!}{(n-r-1)!}\,\zeta_{prim}^{(r-1)} + \frac{1}{n}\,(\Lambda_\omega^r\zeta)\,\omega.\end{eqnarray*} Therefore, after multiplying the value for $\zeta_{prim}^{(r-1)}$ given by the last expression with $(n-2)!/(n-r-1)!$, we get: \begin{eqnarray*}\frac{(n-2)!}{(n-r-1)!}\,\zeta_{prim}^{(r-1)} = \frac{1}{(r-1)!}\,\bigg(\Lambda_\omega^{r-1}\zeta - \frac{1}{n}\,(\Lambda_\omega^r\zeta)\,\omega\bigg).\end{eqnarray*} This proves (\ref{eqn:star_Lefschetz_computed-components_r-1}).

\vspace{1ex}

Finally, (\ref{eqn:star_product_zeta_omega-powers_final}) follows at once by putting together (\ref{eqn:star_product_zeta_omega-powers}), (\ref{eqn:star_Lefschetz_computed-components_r-1}) and (\ref{eqn:star_Lefschetz_computed-components_r}).

\vspace{1ex}

$(2)$\, Let $v\in C^2_{p,\,q}(X,\,\C)$ be $\omega$-primitive. This means that $\Lambda_\omega v = 0$. Hence: \begin{eqnarray*}0=\Delta''_\omega\Lambda_\omega v = \Lambda_\omega\Delta''_\omega v,\end{eqnarray*} showing that $\Delta''_\omega v$ is $\omega$-primitive. We have used the commutation between $\Delta''_\omega$ and $\Lambda_\omega$ which holds since their adjoints, $\Delta''_\omega$ and $L_\omega$, commute due to $\omega$ being K\"ahler. This proves implication (\ref{eqn:primitive_Delta''_implication}).

The second statement follows at once from this implication, the commutation of $\Delta''_\omega$ with all $L_\omega^l$ and the uniqueness of the Lefschetz decomposition. Equivalence (\ref{eqn:harmonicity_form_prim-coordinates_equivalence}) follows at once from the above.

\vspace{1ex}

$(3)$\, Let $\alpha\in C^\infty_{m,\,m}(X,\,\C)$ be such that $\Delta''_\omega\alpha = 0$. Then, thanks to (\ref{eqn:harmonicity_form_prim-coordinates_equivalence}), we have $\Delta''_\omega\alpha_{prim}^{(m-1)} = 0$ and $\Delta''_\omega\alpha_{prim}^{(m)} = 0$. Meanwhile, (\ref{eqn:star_product_zeta_omega-powers}) ensures that $\star_\omega(\alpha\wedge\omega_{n-m-1})$ is a linear combination with constant coefficients of $\alpha_{prim}^{(m-1)}$ and $\alpha_{prim}^{(m)}$. We conclude that $\star_\omega(\alpha\wedge\omega_{n-m-1})$ is $\Delta''_\omega$-harmonic, thus proving implication (\ref{eqn:harmonicity_alpha_alpha-wedge-omega-power_implication}). The last statement follows from $\Delta_\omega = 2\,\Delta''_\omega$ and from $\ker\Delta_\omega = \ker d\cap\ker d^\star_\omega$. \hfill $\Box$

\begin{Cor}\label{Cor:zeta-prim-r-1_Lambda} In the setting of Lemma \ref{Lem:star_product_zeta_omega-powers}, we have the following equality of $(1,\,1)$-forms: \begin{eqnarray*}\zeta_{prim}^{(r-1)} = \frac{(n-r-1)!}{(n-2)!\,(r-1)!}\,(\Lambda_\omega^{r-1}\zeta)_{prim},\end{eqnarray*} where $(\Lambda_\omega^{r-1}\zeta)_{prim}$ is the $\omega$-primitive component of $\Lambda_\omega^{r-1}\zeta$ in its Lefschetz decomposition.

\end{Cor}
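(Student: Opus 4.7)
The plan is to derive the corollary directly from formula (\ref{eqn:star_Lefschetz_computed-components_r-1}) of Lemma \ref{Lem:star_product_zeta_omega-powers}, which already expresses $\zeta_{prim}^{(r-1)}$ as $\frac{(n-r-1)!}{(n-2)!\,(r-1)!}$ times the bracketed combination $\Lambda_\omega^{r-1}\zeta - \frac{1}{n}(\Lambda_\omega^r\zeta)\,\omega$. Thus the entire task reduces to identifying this bracket with the $\omega$-primitive component $(\Lambda_\omega^{r-1}\zeta)_{prim}$.

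To do that, I would write the Lefschetz decomposition of the $(1,1)$-form $\Lambda_\omega^{r-1}\zeta$ explicitly as
\begin{eqnarray*}
\Lambda_\omega^{r-1}\zeta = (\Lambda_\omega^{r-1}\zeta)_{prim} + f\,\omega,
\end{eqnarray*}
where $(\Lambda_\omega^{r-1}\zeta)_{prim}$ is a primitive $(1,1)$-form and $f$ is a $C^\infty$ function (the primitive component of bidegree $(0,0)$). Applying $\Lambda_\omega$ to this decomposition kills the primitive $(1,1)$-part, and using the standard identity $\Lambda_\omega\omega = n$ on an $n$-dimensional manifold yields $\Lambda_\omega^r\zeta = n f$, hence $f = \frac{1}{n}\,\Lambda_\omega^r\zeta$. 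Rearranging gives
\begin{eqnarray*}
(\Lambda_\omega^{r-1}\zeta)_{prim} = \Lambda_\omega^{r-1}\zeta - \frac{1}{n}\,(\Lambda_\omega^r\zeta)\,\omega,
\end{eqnarray*}
which is exactly the bracketed expression appearing in (\ref{eqn:star_Lefschetz_computed-components_r-1}). Substituting back yields the claimed formula.

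There is essentially no obstacle here: the only subtle point is making sure the uniqueness of the Lefschetz decomposition of a $(1,1)$-form is invoked to identify $f$ from $\Lambda_\omega^r\zeta$, but this is immediate once one observes $\Lambda_\omega$ vanishes on primitive $(1,1)$-forms. The whole argument is a short bookkeeping step that extracts the primitive part of a single $(1,1)$-form, and it relies entirely on formula (\ref{eqn:star_Lefschetz_computed-components_r-1}) already established in the lemma together with the elementary identity $\Lambda_\omega\omega = n$.
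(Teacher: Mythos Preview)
Your argument is correct and follows essentially the same route as the paper: write the Lefschetz decomposition of the $(1,1)$-form $\Lambda_\omega^{r-1}\zeta$ as $(\Lambda_\omega^{r-1}\zeta)_{prim} + \frac{1}{n}(\Lambda_\omega^r\zeta)\,\omega$ and combine with formula (\ref{eqn:star_Lefschetz_computed-components_r-1}). The only cosmetic difference is that you explicitly derive the coefficient $\frac{1}{n}$ by applying $\Lambda_\omega$, whereas the paper simply states the decomposition.
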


\noindent {\it Proof.} The Lefschetz decomposition with respect to $\omega$ of the $(1,\,1)$-form $\Lambda_\omega^{r-1}\zeta$ reads: \begin{eqnarray*}\Lambda_\omega^{r-1}\zeta = (\Lambda_\omega^{r-1}\zeta)_{prim} + \frac{1}{n}\,(\Lambda_\omega^r\zeta)\,\omega.\end{eqnarray*}

The contention follows at once by combining this with formula (\ref{eqn:star_Lefschetz_computed-components_r-1}).  \hfill $\Box$

\begin{Cor-Def}\label{Cor-Def:P_omega_def-formula} Let $(X,\,\omega)$ be a complex Hermitian manifold with $\mbox{dim}_\C X = n$ and let $m\in\{1,\dots , n\}$.

\vspace{1ex}

$(1)$\, The linear operator $Q_\omega:C^\infty_{m-1,\,m-1}(X,\,\R)\longrightarrow C^\infty_{1,\,1}(X,\,\R)$ defined by the first equality below is also expressed by the second equality: \begin{eqnarray}\label{eqn:P_omega_def}Q_\omega(u):= \star_\omega\bigg(i\partial\bar\partial u\wedge\omega_{n-m-1}\bigg) = \frac{1}{(m-1)!}\,\bigg(-\Lambda_\omega^{m-1}(i\partial\bar\partial u) + \frac{1}{m}\,(\Lambda_\omega^m(i\partial\bar\partial u))\,\omega\bigg).\end{eqnarray}

\vspace{1ex}

$(2)$\, If the metric $\omega$ is {\bf K\"ahler}, then for every $u\in C^\infty_{m-1,\,m-1}(X,\,\R)\cap\ker\partial^\star_\omega\cap\ker\bar\partial^\star_\omega$ we have: \begin{eqnarray*}Q_\omega(u)  = \frac{1}{(m-1)!}\,\bigg(-i\partial\bar\partial\Lambda_\omega^{m-1}u + (m-1)\,\Delta''_\omega\Lambda_\omega^{m-2} u - (\Delta''_\omega\Lambda_\omega^{m-1}u)\,\omega\bigg).\end{eqnarray*}

\vspace{1ex}

$(3)$\, If the metric $\omega$ is {\bf K\"ahler} and $\rho>0$ is an arbitrary Hermitian metric on $X$, then for every $u\in C^\infty_{m-1,\,m-1}(X,\,\R)\cap\ker\partial^\star_\omega\cap\ker\bar\partial^\star_\omega$ we have: \begin{eqnarray}\label{eqn:Lambda_rho-P_omega_formula}\nonumber\Lambda_\rho\bigg(Q_\omega(u)\bigg) & = & \frac{1}{(m-1)!}\,\bigg(\Delta''_\rho + \frac{m-n-1}{n}\, (\Lambda_\rho\omega)\,\Delta''_\omega + \bar\tau_\rho^\star\bar\partial\bigg)(\Lambda_\omega^{m-1} u) \\
  & + & \frac{(n-2)!}{(n-m)!}\,\Lambda_\rho\bigg(\Delta''_\omega(u_{prim}^{(m-2)})\bigg),\end{eqnarray}  with the understanding that the last term vanishes when $m=1$.

\end{Cor-Def}

\noindent {\it Proof.} $(1)$\, The second equality follows by taking $\zeta = i\partial\bar\partial u$ and $r=m$ in (\ref{eqn:star_product_zeta_omega-powers_final}).

\vspace{1ex}

$(2)$\, Putting together (\ref{eqn:P_omega_def}) and (\ref{eqn:Lambda-l-iddbar-u}), we get for every $u\in C^\infty_{m-1,\,m-1}(X,\,\R)\cap\ker\partial^\star_\omega\cap\ker\bar\partial^\star_\omega$: \begin{eqnarray*}Q_\omega(u)  = \frac{1}{(m-1)!}\,\bigg(-i\partial\bar\partial\Lambda_\omega^{m-1}u + (m-1)\,\Delta''_\omega\Lambda_\omega^{m-2} u + \frac{1}{m}\,(i\partial\bar\partial\Lambda_\omega^m u)\,\omega - (\Delta''_\omega\Lambda_\omega^{m-1}u)\,\omega\bigg),\end{eqnarray*} which is the stated formula since $\Lambda_\omega^m u = 0$ for bidegree reasons.

\vspace{1ex}

$(3)$\, Fix $u\in C^\infty_{m-1,\,m-1}(X,\,\R)\cap\ker\partial^\star_\omega\cap\ker\bar\partial^\star_\omega$. After taking $\Lambda_\rho$ in the equality stated under $(2)$, we get: \begin{eqnarray*}\Lambda_\rho\bigg(Q_\omega(u)\bigg)  = \frac{1}{(m-1)!}\,\bigg(-\Lambda_\rho\bigg(i\partial\bar\partial\Lambda_\omega^{m-1}u\bigg) - (\Lambda_\rho\omega)\,\Delta''_\omega(\Lambda_\omega^{m-1}u) + (m-1)\,\Lambda_\rho\bigg(\Delta''_\omega(\Lambda_\omega^{m-2} u)\bigg)\bigg).\end{eqnarray*}

We now apply formula (\ref{eqn:Laplacians_link_Hermitian}) with $\Lambda_\omega^{m-1}u$ (a function) in place of $u$. Since $\Lambda_\rho(\Lambda_\omega^{m-1} u) = 0$, $\partial_\rho^\star(\Lambda_\omega^{m-1} u) = 0$ and $\tau_\rho^\star(\Lambda_\omega^{m-1} u) = 0$ (for bidegree reasons), (\ref{eqn:Laplacians_link_Hermitian}) yields: \begin{eqnarray*}\Lambda_\rho\bigg(i\partial\bar\partial\Lambda_\omega^{m-1}u\bigg) = - \Delta''_\rho(\Lambda_\omega^{m-1} u) - \bar\tau_\rho^\star\bar\partial(\Lambda_\omega^{m-1} u).\end{eqnarray*}

Plugging this expression into the last formula for $\Lambda_\rho(Q_\omega(u))$, we get: \begin{eqnarray*}\Lambda_\rho\bigg(Q_\omega(u)\bigg) =  \frac{1}{(m-1)!}\,\bigg(\Delta''_\rho - (\Lambda_\rho\omega)\,\Delta''_\omega + \bar\tau_\rho^\star\bar\partial\bigg)(\Lambda_\omega^{m-1} u) + \frac{1}{(m-2)!}\,\Lambda_\rho\bigg(\Delta''_\omega(\Lambda_\omega^{m-2} u)\bigg),\end{eqnarray*} with the understanding that the last term vanishes when $m=1$. Now, $\Lambda_\omega^{m-2} u$ is a $(1,\,1)$-form, so its Lefschetz decomposition with respect to $\omega$ yields the first equality below: \begin{eqnarray*}\Lambda_\omega^{m-2} u = (\Lambda_\omega^{m-2} u)_{prim} + \frac{1}{n}\,(\Lambda_\omega^{m-1} u)\,\omega = \frac{(n-2)!\,(m-2)!}{(n-m)!}\,u_{prim}^{(m-2)} + \frac{1}{n}\,(\Lambda_\omega^{m-1} u)\,\omega,\end{eqnarray*} where the second equality follows from Corollary \ref{Cor:zeta-prim-r-1_Lambda} in which we take $\zeta:=u$ and $r:=m-1$. Plugging this expression for $\Lambda_\omega^{m-2} u$ into the last formula for $\Lambda_\rho(Q_\omega(u))$, we get (\ref{eqn:Lambda_rho-P_omega_formula}).  \hfill $\Box$

\begin{Cor}\label{Cor:PDE_two-prim-components} In the setting of Definition \ref{Def:equation}, the Monge-Amp\`ere-type equation (\ref{eqn:equation}) places constraints only on the $\omega$-primitive coordinates $u_{prim}^{(m-2)}$ of type $(1,\,1)$ and $u_{prim}^{(m-1)}$ of type $(0,\,0)$ of any solution $u\in C^\infty_{m-1,\,m-1}(X,\,\R)$.

  In other words, (\ref{eqn:equation}) can be seen as an equation whose possible solutions are pairs \begin{eqnarray*}\bigg(u_{prim}^{(m-2)},\,u_{prim}^{(m-1)}\bigg)\in C^\infty_{1,\,1}(X,\,\R)_{prim}\times C^\infty_{0,\,0}(X,\,\R).\end{eqnarray*}

\end{Cor}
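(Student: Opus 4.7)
The plan is to split the left-hand side of equation (\ref{eqn:equation}) into a part depending only on $\alpha$ and a part depending only on the relevant primitive coordinates of $u$. Since $\star_\omega(\,\cdot\,\wedge\omega_{n-m-1})$ is $\R$-linear, we have
\begin{eqnarray*}
\star_\omega\bigl((\alpha + i\partial\bar\partial u)\wedge\omega_{n-m-1}\bigr) = \star_\omega(\alpha\wedge\omega_{n-m-1}) + Q_\omega(u),
\end{eqnarray*}
where $Q_\omega$ is the operator from Corollary and Definition \ref{Cor-Def:P_omega_def-formula}. The first summand is independent of $u$, so the entire $u$-dependence of (\ref{eqn:equation}) is encoded in $Q_\omega(u)$.

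Because the initial conditions (\ref{eqn:equation_initial-conditions}) include $u\in\ker\partial^\star_\omega\cap\ker\bar\partial^\star_\omega$, part $(2)$ of Corollary and Definition \ref{Cor-Def:P_omega_def-formula} applies and expresses $Q_\omega(u)$ through only $\Lambda_\omega^{m-1}u$ and $\Lambda_\omega^{m-2}u$ (and their $i\partial\bar\partial$, $\Delta''_\omega$ images). It therefore suffices to check that these two contractions are fully determined by $u_{prim}^{(m-1)}$ and $u_{prim}^{(m-2)}$.

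For this I invoke the explicit Lefschetz formulas established in the proof of Lemma \ref{Lem:star_product_zeta_omega-powers}(1), applied to $\zeta:=u$ and $r:=m-1$. The computation with $s=r$ yields $\Lambda_\omega^{m-1}u = \tfrac{(m-1)!\,n!}{(n-m+1)!}\,u_{prim}^{(m-1)}$, a (constant multiple of a) $(0,0)$-form, while the computation with $s=r-1$, combined with Corollary \ref{Cor:zeta-prim-r-1_Lambda}, yields
\begin{eqnarray*}
\Lambda_\omega^{m-2}u = \frac{(n-2)!\,(m-2)!}{(n-m)!}\,u_{prim}^{(m-2)} + \frac{1}{n}\,(\Lambda_\omega^{m-1}u)\,\omega.
\end{eqnarray*}
Plugging these two identities into the formula of part $(2)$ of Corollary and Definition \ref{Cor-Def:P_omega_def-formula} exhibits $Q_\omega(u)$ as a differential operator applied to the pair $(u_{prim}^{(m-2)},\,u_{prim}^{(m-1)})$ alone, hence the same is true of the LHS of (\ref{eqn:equation}) and of the scalar equation itself after raising to the $n$-th power. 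This is the desired conclusion. The whole argument is a bookkeeping step collating identities already proved, so no genuine obstacle arises; the case $m=1$ is covered by the same formulas under the convention that the $u_{prim}^{(m-2)}$-term is absent, recovering the classical function-valued setting.
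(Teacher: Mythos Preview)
Your argument is correct and follows essentially the same route as the paper: split off $Q_\omega(u)$ from the $\alpha$-term, invoke part $(2)$ of Corollary and Definition \ref{Cor-Def:P_omega_def-formula} (using the initial condition $u\in\ker\partial^\star_\omega\cap\ker\bar\partial^\star_\omega$) to reduce the $u$-dependence to $(\Lambda_\omega^{m-2}u,\,\Lambda_\omega^{m-1}u)$, and then identify this pair with $(u_{prim}^{(m-2)},\,u_{prim}^{(m-1)})$ via the Lefschetz formulas (\ref{eqn:star_Lefschetz_computed-components_r-1})--(\ref{eqn:star_Lefschetz_computed-components_r}). Your write-up is slightly more explicit about the constants, but the logic is identical.
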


\noindent {\it Proof.} With the notation of Cor-Def \ref{Cor-Def:P_omega_def-formula}, equation (\ref{eqn:equation}) reads: \begin{eqnarray*}\bigg(\star_\omega(\alpha\wedge\omega_{n-m-1}) + Q_\omega(u)\bigg)^n = dV.\end{eqnarray*} Meanwhile, $(2)$ of Cor-Def \ref{Cor-Def:P_omega_def-formula} shows that $Q_\omega(u)$ depends only on the pair $(\Lambda_\omega^{m-2} u,\,\Lambda_\omega^{m-1}u)\in C^\infty_{1,\,1}(X,\,\R)\times C^\infty_{0,\,0}(X,\,\R)$. Moreover, (\ref{eqn:star_Lefschetz_computed-components_r-1}) and (\ref{eqn:star_Lefschetz_computed-components_r}) show that this pair corresponds bijectively to the pair $(u_{prim}^{(m-2)},\,u_{prim}^{(m-1)})\in C^\infty_{1,\,1}(X,\,\R)_{prim}\times C^\infty_{0,\,0}(X,\,\R)$.  The conclusion follows.  \hfill $\Box$

\vspace{2ex}

We are now in a position to prove our first result (the uniqueness Theorem \ref{The:equation_uniqueness_introd}) concerning the new Monge-Amp\`ere-type equation introduced in Definition \ref{Def:equation}. It implies the following uniqueness (up to a constant multiple of $\omega^{m-1}$) result for the solutions of this equation. For the notation, see the definition of {\it $\omega$-primitive coordinates} given just above Lemma \ref{Lem:star_product_zeta_omega-powers}.

\begin{The}\label{The:equation_uniqueness} Let $(X,\,\omega)$ be a compact K\"ahler manifold with $\mbox{dim}_\C X = n$ and let $m\in\{1,\dots , n\}$. Fix a volume form $dV\in C^\infty_{n,\,n}(X,\,\R)$ with $dV>0$ and a form $\alpha\in C^\infty_{m,\,m}(X,\,\R)$ such that $d\alpha = 0$ and $\alpha>0$ strongly on $X$.

 If there exist $u_1 = \bigg((u_1)_{prim}^{(l)}\bigg)_{0\leq l\leq m-1},\,u_2= \bigg((u_2)_{prim}^{(l)}\bigg)_{0\leq l\leq m-1}\in C^\infty_{m-1,\,m-1}(X,\,\R)$ such that \begin{eqnarray*}\bigg[\star_\omega\bigg((\alpha + i\partial\bar\partial u_1)\wedge\omega_{n-m-1}\bigg)\bigg]^n = \bigg[\star_\omega\bigg((\alpha + i\partial\bar\partial u_2)\wedge\omega_{n-m-1}\bigg)\bigg]^n,\end{eqnarray*} satisfying the initial conditions: \begin{eqnarray*}\label{eqn:equation_initial-conditions_re-uniqueness}\alpha + i\partial\bar\partial u_j > 0 \hspace{1ex}\mbox{(strongly)} \hspace{5ex}  \mbox{and} \hspace{5ex} u_j\in\ker\partial^\star_\omega\cap\ker\bar\partial^\star_\omega,  \hspace{5ex} j=1,2,\end{eqnarray*} and, furthermore, satisfying the property: \begin{eqnarray}\label{eqn:equation_uniqueness_presciption}(u_1)_{prim}^{(l)} = (u_2)_{prim}^{(l)},  \hspace{5ex} l=0,\dots , m-2,\end{eqnarray} then there exists a constant $C\in\R$ such that \begin{eqnarray*}u_1 = u_2 + C\,\omega^{m-1}.\end{eqnarray*}

\end{The}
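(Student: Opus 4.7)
The plan is to turn the nonlinear equality $f_1^n = f_2^n$, with $f_j := \star_\omega\bigl((\alpha + i\partial\bar\partial u_j) \wedge \omega_{n-m-1}\bigr)$, into a single linear second-order elliptic equation with no zeroth-order term for a scalar function $\varphi$ on $X$, and then conclude by the strong maximum principle on the compact manifold.

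First I would set $v := u_1 - u_2$. By the prescription (\ref{eqn:equation_uniqueness_presciption}), every primitive component of $v$ below the top one vanishes, so the Lefschetz decomposition (\ref{eqn:Lefschetz-decomp}) of $v$ collapses to $v = \varphi\,\omega^{m-1}$ with $\varphi := (u_1)_{prim}^{(m-1)} - (u_2)_{prim}^{(m-1)} \in C^\infty(X,\R)$; the goal becomes showing that $\varphi$ is constant. Since $\omega$ is Kähler, $i\partial\bar\partial v = i\partial\bar\partial\varphi \wedge \omega^{m-1}$, and $\omega^{m-1} \wedge \omega_{n-m-1} = \frac{(n-2)!}{(n-m-1)!}\,\omega_{n-2}$. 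Lemma \ref{Lem:star_product_zeta_omega-powers}(1) applied with $r = 1$ and $\zeta = i\partial\bar\partial\varphi$, combined with the Kähler identity $\Lambda_\omega(i\partial\bar\partial\varphi) = -\Delta''_\omega\varphi$ (the functional case of (\ref{eqn:Laplacians_link_Kaehler})), should then give
\[
f_1 - f_2 \;=\; Q_\omega(v) \;=\; -C_{n,m}\bigl(i\partial\bar\partial\varphi + (\Delta''_\omega\varphi)\,\omega\bigr), \qquad C_{n,m} := \frac{(n-2)!}{(n-m-1)!} > 0.
\]

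Next, since $f_1$ and $f_2$ are $(1,1)$-forms (hence commute under wedge), I would use the telescoping identity
\[
0 \;=\; f_1^n - f_2^n \;=\; (f_1 - f_2) \wedge \Omega, \qquad \Omega \;:=\; \sum_{k=0}^{n-1} f_1^k \wedge f_2^{n-1-k},
\]
noting that $\Omega$ is a strongly positive $(n-1,n-1)$-form on $X$, as a sum of products of strongly positive $(1,1)$-forms. Combined with the previous display, this yields the pointwise identity $\bigl(i\partial\bar\partial\varphi + (\Delta''_\omega\varphi)\,\omega\bigr) \wedge \Omega = 0$. Dividing by $dV_\omega$ writes this as a linear PDE $L\varphi = 0$ on $X$ whose principal symbol, in any local chart, is $\Omega^{j\bar k} + b\,g^{j\bar k}$, where $(\Omega^{j\bar k})$ is the positive definite Hermitian matrix attached to $\Omega$, $(g^{j\bar k})$ is the inverse of $\omega$, and $b := (\omega \wedge \Omega)/dV_\omega > 0$ pointwise. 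Hence $L$ is uniformly elliptic of order two, with no zeroth-order term, and the strong maximum principle on the compact $X$ then forces $\varphi \equiv C$ for some constant $C \in \R$, so $u_1 - u_2 = C\,\omega^{m-1}$.

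The main obstacle I anticipate is the algebraic bookkeeping in the computation of $f_1 - f_2$: correctly shepherding the Lefschetz constants together with formula (\ref{eqn:star_product_zeta_omega-powers_final}) and the Kähler commutation relations to arrive at the clean combination $-C_{n,m}\bigl(i\partial\bar\partial\varphi + (\Delta''_\omega\varphi)\,\omega\bigr)$. Once that formula is in hand, the telescoping trick converts the Monge–Ampère equality $f_1^n = f_2^n$ into the desired linear elliptic equation, and the remainder is a standard maximum-principle argument on a compact manifold.
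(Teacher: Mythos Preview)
Your argument is correct and follows the same overall strategy as the paper: linearise the Monge--Amp\`ere equality via the telescoping identity $f_1^n-f_2^n=(f_1-f_2)\wedge\Omega$, identify $(f_1-f_2)$ with a second-order expression in a scalar function, and conclude by the strong maximum principle on the compact $X$. The paper carries this out in a slightly more general form: instead of using the full prescription (\ref{eqn:equation_uniqueness_presciption}) to collapse $v=u_1-u_2$ to $\varphi\,\omega^{m-1}$ at the outset, it works with the general formula (\ref{eqn:Lambda_rho-P_omega_formula}) for $\Lambda_\rho(Q_\omega(u))$ and only invokes the equality $(u_1)_{prim}^{(m-2)}=(u_2)_{prim}^{(m-2)}$ to kill the residual $(1,1)$-primitive term. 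This yields the stronger Theorem \ref{The:equation_uniqueness_introd} (only the $(m-2)$-component prescribed) from which Theorem \ref{The:equation_uniqueness} is deduced. Your route, by contrast, exploits the full hypothesis of Theorem \ref{The:equation_uniqueness} to get a cleaner expression $Q_\omega(v)=-C_{n,m}\bigl(i\partial\bar\partial\varphi+(\Delta''_\omega\varphi)\,\omega\bigr)$ and a PDE with no first-order (torsion) terms; this is more direct for the statement at hand but does not extend to Theorem \ref{The:equation_uniqueness_introd}.

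One correction to your symbol computation: since $\Delta''_\omega\varphi=-\Lambda_\omega(i\partial\bar\partial\varphi)=-g^{j\bar k}\varphi_{j\bar k}$, the principal part of $L\varphi=\bigl(i\partial\bar\partial\varphi+(\Delta''_\omega\varphi)\,\omega\bigr)\wedge\Omega/dV_\omega$ has coefficient matrix $\Omega^{j\bar k}-b\,g^{j\bar k}$, not $\Omega^{j\bar k}+b\,g^{j\bar k}$. Diagonalising simultaneously (so that $g^{j\bar k}=\delta^{jk}$ and $\Omega^{j\bar k}=\lambda_j\delta^{jk}$ with $\lambda_j>0$, while $b=\sum_l\lambda_l$), the $j$-th eigenvalue is $\lambda_j-\sum_l\lambda_l=-\sum_{l\neq j}\lambda_l<0$. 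Thus the symbol is \emph{negative} definite rather than positive definite. This does not affect your conclusion: $L$ is still uniformly elliptic with no zeroth-order term, so the strong maximum principle applies and $\varphi$ is constant.
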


\noindent {\it Proof of Theorem \ref{The:equation_uniqueness_introd}, hence also of Theorem \ref{The:equation_uniqueness}.} Let $u_1,\,u_2\in C^\infty_{m-1,\,m-1}(X,\,\R)$ be solutions to equation (\ref{eqn:equation}). Then: \begin{eqnarray*}\star_\omega\bigg(i\partial\bar\partial(u_1-u_2)\wedge\omega_{n-m-1}\bigg)\wedge\rho^{n-1} = 0 \iff \Lambda_\rho\bigg(\star_\omega\bigg(i\partial\bar\partial(u_1-u_2)\wedge\omega_{n-m-1}\bigg)\bigg) = 0,\end{eqnarray*} where $\rho>0$ is the unique form $\rho\in C^\infty_{1,\,1}(X,\,\R)$ such that \begin{eqnarray*}\rho^{n-1} = \sum\limits_{s=1}^n\bigg[\star_\omega\bigg((\alpha + i\partial\bar\partial u_1)\wedge\omega_{n-m-1}\bigg)\bigg]^{n-s}\wedge\bigg[\star_\omega\bigg((\alpha + i\partial\bar\partial u_2)\wedge\omega_{n-m-1}\bigg)\bigg]^{s-1}>0.\end{eqnarray*} Indeed, $\rho$ is the (unique) $(n-1)^{st}$ root of the positive definite $(n-1,\,n-1)$-form on the r.h.s. above. The positive definiteness follows from the fact that $(\alpha + i\partial\bar\partial u_j)\wedge\omega_{n-m-1}>0$ at every point in $X$ for $j=1,2$ (thanks to the initial conditions (\ref{eqn:equation_initial-conditions})) and that products of positive definite $(1,\,1)$-forms are strictly strongly positive.

Thus, whenever $u_1,\,u_2\in C^\infty_{m-1,\,m-1}(X,\,\R)$ are solutions to equation (\ref{eqn:equation}), we have: \begin{eqnarray}\label{eqn:uniqueness_proof_P_equality}\Lambda_\rho\bigg(Q_\omega(u_1-u_2)\bigg) = 0,\end{eqnarray} where the linear operator $Q_\omega:C^\infty_{m-1,\,m-1}(X,\,\R)\longrightarrow C^\infty_{1,\,1}(X,\,\R)$ was introduced in Corollary and Definition \ref{Cor-Def:P_omega_def-formula}.

We conclude from the combined (\ref{eqn:uniqueness_proof_P_equality}) and (\ref{eqn:Lambda_rho-P_omega_formula}) that, whenever $u_1,\,u_2\in C^\infty_{m-1,\,m-1}(X,\,\R)$ are solutions to equation (\ref{eqn:equation}) such that $(u_1)_{prim}^{(m-2)} = (u_2)_{prim}^{(m-2)}$, we have: \begin{eqnarray}\label{eqn:uniqueness_proof_P_equality_initial-condition}\bigg(\Delta''_\rho + \frac{m-n-1}{n}\, (\Lambda_\rho\omega)\,\Delta''_\omega + \bar\tau_\rho^\star\bar\partial\bigg)\bigg(\Lambda_\omega^{m-1} u_1 - \Lambda_\omega^{m-1} u_2\bigg) = 0.\end{eqnarray} Since the second-order differential operator on the left is elliptic with no non-zero zero$^{th}$ order term (it is actually a Laplacian, as was proved with slightly different coefficients in [Pop15, proof of Proposition 6.1.]) and since the manifold $X$ is compact, the function $\Lambda_\omega^{m-1} u_1 - \Lambda_\omega^{m-1} u_2$ must be constant. Thanks to (\ref{eqn:star_Lefschetz_computed-components_r}) (applied with $r=m-1$ and $\zeta:=u_1-u_2$), this is equivalent to the function $(u_1)_{prim}^{(m-1)} - (u_2)_{prim}^{(m-1)}$ being constant on $X$.   \hfill $\Box$

\section{An application of the new Monge-Amp\`ere-type PDE}\label{section:application_PDE}

We start with a pointwise inequality that can be viewed as a higher-degree analogue of inequality $(7)$ in Lemma $3.1$ of [Pop16].

\begin{Lem}\label{Lem:pointwise_ineq_form-products} Let $(X,\,\omega)$ be a complex Hermitian $n$-dimensional manifold and let $m\in\{1,\dots , n-1\}$.

  Then, for any $(1,\,1)$-form $\rho>0$, any $(m,\,m)$-form $\beta\geq 0$ (strongly) and any $(n-m,\,n-m)$-form $\Omega\geq 0$ (weakly) on $X$, the following inequality holds at every point of $X$: \begin{eqnarray}\label{eqn:pointwise_ineq_form-products}\frac{\rho_m\wedge\Omega}{\omega_n}\,\frac{\rho_{n-m}\wedge\beta}{\rho_n}\geq\frac{\beta\wedge\Omega}{\omega_n},\end{eqnarray} with the usual notation $a_p:=a^p/p!$ for every $(1,\,1)$-form and every $p$.

\end{Lem}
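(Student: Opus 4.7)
The plan is to eliminate the reference metric $\omega$: multiplying both sides of (\ref{eqn:pointwise_ineq_form-products}) by $\omega_n$ and then dividing by $\rho_n$ reduces it to the $\omega$-free inequality
\begin{eqnarray*}
\frac{\rho_m \wedge \Omega}{\rho_n} \cdot \frac{\rho_{n-m} \wedge \beta}{\rho_n} \geq \frac{\beta \wedge \Omega}{\rho_n},
\end{eqnarray*}
a pointwise inequality between nonnegative real-valued functions on $X$. I fix a point $x \in X$ once and for all and work entirely inside $\Lambda^{\bullet,\bullet} T_x^\star X$.

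Both sides are linear in $\beta$, so using the definition of strong positivity I write $\beta = \sum_k c_k\,\beta_k$ with $c_k \geq 0$ and each $\beta_k = i\alpha_1^{(k)} \wedge \bar\alpha_1^{(k)} \wedge \cdots \wedge i\alpha_m^{(k)} \wedge \bar\alpha_m^{(k)}$ for some $(1,0)$-forms $\alpha_j^{(k)}$, and argue term by term. Thus from now on I assume $\beta = i\alpha_1 \wedge \bar\alpha_1 \wedge \cdots \wedge i\alpha_m \wedge \bar\alpha_m$. If the $\alpha_j$'s are linearly dependent then $\beta = 0$ and there is nothing to prove; otherwise I proceed in the linearly independent case.

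Since $\rho$ is positive definite, Gram--Schmidt applied to any basis extension of $(\alpha_1, \ldots, \alpha_m)$ produces a $\rho$-orthonormal basis $(e_1, \ldots, e_n)$ of $T_x^{\star(1,0)}X$ with $\mathrm{span}(e_1, \ldots, e_m) = \mathrm{span}(\alpha_1, \ldots, \alpha_m)$. In this basis $\rho = i \sum_{j=1}^n e_j \wedge \bar e_j$, and the standard identity $\bigwedge_{j=1}^m (i\alpha_j \wedge \bar\alpha_j) = i^{m^2}\,(\alpha_1 \wedge \cdots \wedge \alpha_m) \wedge \overline{(\alpha_1 \wedge \cdots \wedge \alpha_m)}$, combined with expanding each $\alpha_j$ in $(e_1, \ldots, e_m)$, yields $\beta = c\,\eta_{I_0}$ where $I_0 := \{1, \ldots, m\}$, $c := |\det A|^2 > 0$ (with $A$ the matrix expressing the $\alpha_j$'s in terms of $(e_1, \ldots, e_m)$), $\eta_j := i\, e_j \wedge \bar e_j$, and $\eta_J := \bigwedge_{j \in J} \eta_j$ for $J \subset \{1, \ldots, n\}$.

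The elementary identities $\rho_p = \sum_{|I|=p} \eta_I$ (for every $p$) and $\rho_n = \eta_{\{1,\ldots,n\}}$ are immediate, as is the observation that $\eta_J \wedge \eta_{I_0} = \rho_n$ when $J = I_0^c$ and $0$ otherwise. For $|I| = m$ set $\hat\Omega^{(I)} := (\eta_I \wedge \Omega)/\rho_n$; this is nonnegative because $\eta_I$ is a simple strongly positive $(m,m)$-form and $\Omega$ is weakly positive by hypothesis. Straightforward bookkeeping then gives
\begin{eqnarray*}
\frac{\rho_m \wedge \Omega}{\rho_n} = \sum_{|I|=m} \hat\Omega^{(I)}, \qquad \frac{\rho_{n-m} \wedge \beta}{\rho_n} = c, \qquad \frac{\beta \wedge \Omega}{\rho_n} = c\,\hat\Omega^{(I_0)},
\end{eqnarray*}
so after cancelling $c > 0$ the inequality collapses to $\sum_{|I|=m} \hat\Omega^{(I)} \geq \hat\Omega^{(I_0)}$, which holds because $\hat\Omega^{(I_0)}$ is one of the nonnegative summands on the left. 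The only step requiring any care is the Gram--Schmidt normalization that simultaneously standardizes $\rho$ and aligns the simple $\beta$ with the coordinate monomial $\eta_{I_0}$; once that has been achieved, the claim reduces to the tautology ``a nonnegative sum dominates any of its terms''.
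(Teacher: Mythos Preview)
Your proof is correct. The overall strategy matches the paper's --- work at a point, pass to coordinates adapted to $\rho$, and reduce the inequality to the tautology that a sum of nonnegative terms dominates any single summand --- but your reduction step is different and in fact cleaner. The paper asserts that $\rho$ and the strongly semi-positive $(m,m)$-form $\beta$ can be \emph{simultaneously diagonalised} at $x$, i.e.\ that in suitable coordinates $\beta(x)=\sum_{|J|=m}\beta_J\,i^{m^2}dz_J\wedge d\bar z_J$ with $\beta_J\geq 0$, and then computes all three ratios in those coordinates. You instead use linearity in $\beta$ to reduce to a single decomposable $\beta = i\alpha_1\wedge\bar\alpha_1\wedge\cdots\wedge i\alpha_m\wedge\bar\alpha_m$, and then run Gram--Schmidt to produce a $\rho$-orthonormal frame whose first $m$ covectors span the same $m$-plane as the $\alpha_j$'s, so that $\beta$ becomes a positive multiple of the single monomial $\eta_{I_0}$. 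This is arguably safer: the simultaneous diagonalisation of a positive $(1,1)$-form and an arbitrary strongly positive $(m,m)$-form is not an entirely routine statement for $m\geq 2$, whereas your decomposition-then-adapt-the-frame argument uses only the definition of strong positivity and elementary linear algebra. Your preliminary elimination of $\omega$ (replacing $\omega_n$ by $\rho_n$ via the positive factor $\rho_n/\omega_n$) is also a pleasant simplification that the paper postpones to the final step.
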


\begin{proof} Since the claim is a pointwise inequality, it suffice to prove it at an arbitrary point $x\in X$ that we now fix. Since $\beta$ is strongly semi-positive, there exist local holomorphic coordinates $z_1,\dots , z_n$ centred at $x$ such that $\rho$ and $\beta$ are simultaneously diagonalised at $x$ as: \begin{eqnarray*}\rho(x) = \sum\limits_{j=1}^n\rho_j\,idz_j\wedge d\bar{z}_j  \hspace{5ex}\mbox{and}\hspace{5ex} \beta(x) = \sum\limits_{|J|=m}\beta_J\,i^{m^2}\,dz_J\wedge d\bar{z}_J,\end{eqnarray*} where $J=(j_1,\dots , j_m)$ are multi-indices of length $m$ and $dz_J:=dz_{j_1}\wedge\cdots\wedge dz_{j_m}$ whenever $J=(j_1,\dots , j_m)$ with $1\leq j_1<\dots <j_m\leq n$. The (semi-)positivity assumptions on $\rho$ and $\beta$ mean that $\rho_j>0$ for all $j$ and $\beta_J\geq 0$ for all $J$.

  Let $\Omega = \sum\limits_{|K| = |L| = n-m}\Omega_{K\bar{L}}\,i^{(n-m)^2}\,dz_K\wedge d\bar{z}_L$ be the expression of $\Omega$ in the chosen coordinates in a neighbourhood of $x$. The weak semi-positivity assumption on $\Omega$ implies that $\Omega_{L\bar{L}}\geq 0$ for every $L$.

  At $x$, we get: \begin{eqnarray*}\rho_m\wedge\Omega = \bigg(\sum\limits_{|J|=m}\rho_J\,i^{m^2}\,dz_J\wedge d\bar{z}_J\bigg)\wedge\sum\limits_{|K| = |L| = n-m}\Omega_{K\bar{L}}\,i^{(n-m)^2}\,dz_K\wedge d\bar{z}_L = \bigg(\sum\limits_{|J|=m}\rho_J\,\Omega_{C_J\overline{C_J}}\bigg)\,dV_n,\end{eqnarray*} where $\rho_J:=\rho_{j_1}\cdots\rho_{j_m}$ whenever $J=(j_1,\dots , j_m)$ with $1\leq j_1<\dots <j_m\leq n$, while $C_J:=\{1,\dots , n\}\setminus J$ is the multi-index (of length $n-m$) complementary to $J$ and $dV_n:=idz_1\wedge d\bar{z}_1\wedge\dots\wedge idz_n\wedge d\bar{z}_n$ is the standard volume form in the chosen coordinates defined in a neighbourhhod of $x$.

  Similarly, we get at $x$: \begin{eqnarray*}\rho_{n-m}\wedge\beta = \bigg(\sum\limits_{|K|=n-m}\rho_K\,i^{(n-m)^2}\,dz_K\wedge d\bar{z}_K\bigg)\wedge\sum\limits_{|J|=m}\beta_J\,i^{m^2}\,dz_J\wedge d\bar{z}_J = \bigg(\sum\limits_{|K|=n-m}\rho_K\,\beta_{C_K}\bigg)\,dV_n,\end{eqnarray*} hence \begin{eqnarray*}\frac{\rho_{n-m}\wedge\beta}{\rho_n} = \frac{\sum\limits_{|K|=n-m}\rho_K\,\beta_{C_K}}{\rho_1\dots\rho_n} = \sum\limits_{|K|=n-m}\frac{\beta_{C_K}}{\rho_{C_K}} = \sum\limits_{|I|=m}\frac{\beta_I}{\rho_I}.\end{eqnarray*}

  Since $\beta\wedge\Omega = \bigg(\sum\limits_{|J|=m}\beta_J\,\Omega_{C_J\overline{C_J}}\bigg)\,dV_n$ at $x$, we get the following equivalences at $x$, the first of which being obtained by replacing $\omega_n$ in the two denominators of (\ref{eqn:pointwise_ineq_form-products}) with $dV_n$: \begin{eqnarray*}(\ref{eqn:pointwise_ineq_form-products}) & \iff & \frac{\rho_m\wedge\Omega}{dV_n}\,\frac{\rho_{n-m}\wedge\beta}{\rho_n}\geq\frac{\beta\wedge\Omega}{dV_n} \iff \bigg(\sum\limits_{|J|=m}\rho_J\,\Omega_{C_J\overline{C_J}}\bigg)\,\bigg(\sum\limits_{|I|=m}\frac{\beta_I}{\rho_I}\bigg) \geq \sum\limits_{|J|=m}\beta_J\,\Omega_{C_J\overline{C_J}} \\   & \iff & \sum\limits_{|I|=|J|=m}\frac{\rho_J}{\rho_I}\,\beta_I\,\Omega_{C_J\overline{C_J}} \geq \sum\limits_{|J|=m}\beta_J\,\Omega_{C_J\overline{C_J}}.\end{eqnarray*} This last inequality holds since all the terms in these two sums are non-negative (thanks to our (semi-)positivity assumptions on $\rho$, $\beta$ and $\Omega$) and the sum on the left (whose terms corresponding to $I=J$ are precisely the terms of the sum on the right) contains more terms than the sum on the right. \end{proof}

\vspace{2ex}

Next, we specify the existence result, with appropriate initial conditions, that will be needed and whose study will hopefully be taken up in future work.

\begin{Def}\label{Def:existence_equation_initial-cond} Let $(X,\,\omega)$ be a compact {\bf K\"ahler} manifold with $\mbox{dim}_\C X = n$ and let $m\in\{1,\dots , n\}$.

  We say that equation (\ref{eqn:equation}) is {\bf solvable} if for every $\alpha\in C^\infty_{m,\,m}(X,\,\R)$ such that $d\alpha = 0$ and $\alpha>0$ (strongly) and every volume form $dV\in C^\infty_{n,\,n}(X,\,\R)$ with $dV>0$, there exists a constant $c>0$ and a form $u\in C^\infty_{m-1,\,m-1}(X,\,\R)$ such that \begin{eqnarray}\label{eqn:equation_re}\bigg[\star_\omega\bigg((\alpha + i\partial\bar\partial u)\wedge\omega_{n-m-1}\bigg)\bigg]^n = c\,dV\end{eqnarray} and such that $u$ satisfies the following initial conditions: \begin{eqnarray}\label{eqn:equation_initial-conditions_re}\alpha + i\partial\bar\partial u > 0 \hspace{1ex}\mbox{(strongly)} \hspace{5ex}  \mbox{and} \hspace{5ex} u\in\ker\partial^\star_\omega\cap\ker\bar\partial^\star_\omega  \hspace{5ex}  \mbox{and} \hspace{5ex} \Lambda_\omega^{m-2}(\Delta''_\omega u) = 0.\end{eqnarray}

\end{Def}

\vspace{2ex}

We can now give the main result of this section, an analogue of Theorem 1.1 in [Pop16], subject to the solvability of our Monge-Amp\`ere-type pde (\ref{eqn:equation}). It is slightly more general than Theorem \ref{The:current_existence_introd}.

\begin{The}\label{The:current_existence} Let $(X,\,\omega)$ be a compact {\bf K\"ahler} manifold with $\mbox{dim}_\C X = n$ and let $m\in\{1,\dots , n-1\}$. Suppose that equation (\ref{eqn:equation}) is {\bf solvable} in the sense of Definition \ref{Def:existence_equation_initial-cond}.

  Suppose there exist forms $\alpha, \beta\in C^\infty_{m,\,m}(X,\,\R)$ such that:

\vspace{1ex}

(i)\, $d\alpha = 0$, $\alpha>0$ (strongly) and $\Delta''_\omega\alpha_\omega =0$, where $\alpha_\omega:=\star_\omega\bigg(\alpha\wedge\omega_{n-m-1}\bigg)\in C^\infty_{1,\,1}(X,\,\R)$;

\vspace{1ex}

(ii)\, $\beta\geq C\,\omega_m$ (strongly) for some constant $C>0$ and $\partial\bar\partial\beta = 0$;

\vspace{1ex}

(iii)\, $\displaystyle \frac{1}{(n-m)!}\,\int\limits_X(\alpha_\omega)^{n-m}\wedge\beta < \frac{1}{n!}\,\int\limits_X(\alpha_\omega)^n$.

\vspace{1ex}

Then, there exists a real current $T$ of bidegree $(m,\,m)$ on $X$ with the following properties: \begin{eqnarray*}(a)\, T\geq \delta\,(\alpha_\omega)_m \hspace{3ex}\mbox{for some constant}\hspace{1ex} \delta>0; \hspace{2ex} (b)\, \partial\bar\partial T = 0; \hspace{2ex} (c)\, T\in\bigg[(\alpha_\omega)_m - \beta\bigg]_A,\end{eqnarray*} where $[\,\cdot\,]_A$ stands for the Aeppli cohomology class of the specified form.

Moreover, if $d\beta=0$, the above current $T$ can be found such that $dT = 0$ and $T$ lies in the Bott-Chern cohomology class $[(\alpha_\omega)_m - \beta]_{BC}\in H^{m,\,m}_{BC}(X,\,\R)$.

\end{The}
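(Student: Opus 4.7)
My plan is to take $T:=\rho_m-\beta$, where $\rho:=\alpha_\omega+Q_\omega(u)>0$ is the $(1,1)$-form obtained by applying the assumed solvability of the Monge-Amp\`ere-type equation to a carefully chosen right-hand side, and then verify the three required properties using the pointwise inequality of Lemma \ref{Lem:pointwise_ineq_form-products} together with a cohomological analysis of $Q_\omega(u)$ based on Corollary and Definition \ref{Cor-Def:P_omega_def-formula}.

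First I would invoke the solvability of (\ref{eqn:equation_re}) with right-hand side
\[dV:=(\alpha_\omega)_{n-m}\wedge\beta,\]
a strictly positive $(n,n)$-form since $\alpha>0$ strongly forces $\alpha_\omega$ to be a strongly positive $(1,1)$-form (applying $\star_\omega$ to the strongly positive $(n-1,n-1)$-form $\alpha\wedge\omega_{n-m-1}$, and using that strong and weak positivity coincide in bidegree $(1,1)$), while $\beta\geq C\omega_m>0$. This produces $u$ satisfying the three initial conditions (\ref{eqn:equation_initial-conditions_re}) and a constant $c>0$ with $\rho^n=c\,dV$. Using that $\alpha_\omega$ is $d$-closed and $d^\star_\omega$-closed (from $\Delta''_\omega\alpha_\omega=0$ on a K\"ahler manifold) together with a Stokes-type computation on the $L^2_\omega$-pairing between $\alpha_\omega^{n-k}$ and $i\partial\bar\partial u\wedge\omega_{n-m-1}$, I would establish $\int_X\rho^n=\int_X\alpha_\omega^n$. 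Hypothesis $(iii)$ then forces $\lambda:=c/n!>1$, giving the pointwise identity $\rho_n=\lambda\,(\alpha_\omega)_{n-m}\wedge\beta$.

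Next, applying Lemma \ref{Lem:pointwise_ineq_form-products} to $(\rho,\,\beta,\,\Omega)$ for an arbitrary strongly positive test form $\Omega$ yields
\[\rho_m\wedge\Omega\;\geq\;\lambda\,\frac{(\alpha_\omega)_{n-m}\wedge\beta}{\rho_{n-m}\wedge\beta}\,\beta\wedge\Omega.\]
A pointwise AM-GM-type estimate, obtained by simultaneously diagonalising $\rho$ and $\alpha_\omega$ in an orthonormal $\omega$-frame and using the PDE identity to bound $\rho_{n-m}\wedge\beta$ from above by a controlled multiple of $(\alpha_\omega)_{n-m}\wedge\beta$, translates the above into the strict pointwise inequality $\rho_m\geq(1+\delta')\beta$ weakly, for some $\delta'>0$ determined by $\lambda-1$. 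Combining with $\beta\geq C\omega_m$ and a uniform upper bound $(\alpha_\omega)_m\leq M\omega_m$ on the compact $X$, this gives $\rho_m-\beta\geq\delta\,(\alpha_\omega)_m$ weakly for some $\delta>0$, establishing property $(a)$ for $T:=\rho_m-\beta$.

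For property $(b)$, since $\partial\bar\partial\beta=0$, it remains to show $\partial\bar\partial\rho^m=0$; I would unpack $Q_\omega(u)$ via Corollary and Definition \ref{Cor-Def:P_omega_def-formula}$(2)$ under the three initial conditions on $u$, using the K\"ahler commutation $[\Lambda_\omega,\,\Delta''_\omega]=0$ to exhibit $Q_\omega(u)$ in a form making each term $\alpha_\omega^{m-k}\wedge Q_\omega(u)^k$ in the binomial expansion $\partial\bar\partial$-closed. For property $(c)$, the same unfolding presents $\rho^m-(\alpha_\omega)^m$ as a sum of $\partial$-exact and $\bar\partial$-exact currents (using that $\alpha_\omega$ is $d$-closed and $i\partial\bar\partial u\wedge\omega_{n-m-1}$ is $\partial\bar\partial$-exact), placing $T$ in the Aeppli class $[(\alpha_\omega)_m-\beta]_A$. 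When additionally $d\beta=0$, a refinement of the same analysis shows $d\rho=0$ (via the K\"ahler identities and the initial conditions), hence $dT=0$ and $T\in[(\alpha_\omega)_m-\beta]_{BC}$. The principal obstacle is the pointwise quantitative positivity step---converting the weak inequality of Lemma \ref{Lem:pointwise_ineq_form-products} into the strict bound $\rho_m\geq(1+\delta')\beta$---which requires a delicate eigenvalue/AM-GM estimate exploiting $\lambda>1$; the cohomological verifications $(b)$--$(c)$ form a secondary but intricate obstacle depending on careful manipulation of $Q_\omega(u)$ through the K\"ahler identities.
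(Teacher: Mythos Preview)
Your treatment of properties $(b)$ and $(c)$ is essentially sound, and in fact simpler than you indicate: the initial conditions (\ref{eqn:equation_initial-conditions_re}) together with Corollary and Definition \ref{Cor-Def:P_omega_def-formula}$(2)$ and the K\"ahler commutation $[\Lambda_\omega,\Delta''_\omega]=0$ give $Q_\omega(u)=-\frac{1}{(m-1)!}\,i\partial\bar\partial(\Lambda_\omega^{m-1}u)$, so $\rho=\alpha_\omega+i\partial\bar\partial f$ for a function $f$. Since $d\alpha_\omega=0$, one gets $d\rho=0$ outright, hence $\rho_m-(\alpha_\omega)_m\in\mbox{Im}\,(i\partial\bar\partial)$ and $(b)$, $(c)$ follow immediately.

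The fatal gap is property $(a)$. Your ``AM--GM-type estimate'' cannot exist. The equation fixes only the top power $\rho_n=\lambda\,(\alpha_\omega)_{n-m}\wedge\beta$, which at a point is a single scalar constraint on the eigenvalues of $\rho$; it gives no pointwise control on the intermediate powers $\rho_m$ or on the ratio $(\alpha_\omega)_{n-m}\wedge\beta\,/\,\rho_{n-m}\wedge\beta$. Already for $m=1$, $n=2$ this is the statement that prescribing the volume form of a K\"ahler metric does not bound its smallest eigenvalue from below. Note also that $\rho$ and $\alpha_\omega$ cannot in general be simultaneously diagonalised, so the framework you invoke for the estimate is not available. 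The integral identity $\int_X\rho_{n-m}\wedge\beta=\int_X(\alpha_\omega)_{n-m}\wedge\beta$ (which follows from $\rho-\alpha_\omega\in\mbox{Im}\,(i\partial\bar\partial)$ and $\partial\bar\partial\beta=0$) shows that the pointwise ratio averages to $1$, so it must dip below $1$ somewhere unless it is identically $1$; your factor $\lambda>1$ does not rescue this.

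The paper proceeds entirely differently: it does \emph{not} attempt to produce $T$ explicitly. Instead it invokes the duality Lemma \ref{Lem:duality_generalised-Lamari} to reduce $(a)$ to an \emph{integral} inequality against all weakly positive $\partial\bar\partial$-closed test forms $\Omega$, argues by contradiction with a normalised violating sequence $(\Omega_k)_k$, and for each $k$ solves (\ref{eqn:equation_re}) with right-hand side $c_k\,\beta\wedge\Omega_k$ (so the equation is coupled to the test form, unlike your single choice $dV=(\alpha_\omega)_{n-m}\wedge\beta$). A H\"older inequality combined with Lemma \ref{Lem:pointwise_ineq_form-products} and the reduction $Q_\omega(u_k)=i\partial\bar\partial(\text{function})$ then yields, after Stokes, an inequality that in the limit $k\to\infty$ contradicts hypothesis $(iii)$. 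The current $T$ is obtained only non-constructively, from the duality.
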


Note that the strong positivity assumption on $\alpha$ implies the positivity of the $(1,\,1)$-form $\alpha_\omega$, hence the strong positivity of the $(m,\,m)$-form $(\alpha_\omega)_m$. In particular, the above conclusion (a) implies a form of strict positivity for $T$, analogous at the level of $(m,\,m)$-currents to the strict positivity required  in bidegree $(1,\,1)$ of K\"ahler currents. Thus, Theorem \ref{The:current_existence} produces a kind of $m$-K\"ahler current lying in the Aeppli (resp. Bott-Chern, depending on whether $\beta$ is supposed $\partial\bar\partial$-closed or $d$-closed) cohomology class of $(\alpha_\omega)_m - \beta$.

Also note that hypothesis (iii) of Theorem \ref{The:current_existence} implies the following inequality of cohomology intersection numbers: \begin{eqnarray}\label{eqn:intersection-numbers_coh_ineq}\frac{1}{(n-m)!}\,[\alpha_\omega]_{BC}^{n-m}.\,[\beta]_A < \frac{1}{n!}\,[\alpha_\omega]^n_{BC}.\end{eqnarray}

\noindent {\it Proof of Theorem \ref{The:current_existence}.} 

$\bullet$ We wish to prove the existence of a constant $\delta>0$ and of a real current $S\in{\cal D}^{'m-1,\,m-1}(X,\,\R)$ of bidegree $(m-1,\,m-1)$ on $X$ such that $T:=(\alpha_\omega)_m - \beta + i\partial\bar\partial S\geq\delta\,(\alpha_\omega)_m$ (strongly). Thanks to the duality Lemma \ref{Lem:duality_generalised-Lamari}, this is equivalent to proving the existence of a constant $\delta>0$ such that
\begin{eqnarray*}\int_X\bigg((\alpha_\omega)_m - \beta\bigg)\wedge\Omega\geq\delta\,\int_X(\alpha_\omega)_m\wedge\Omega \hspace{3ex}\mbox{or equivalently}\hspace{3ex} (1-\delta)\,\int_X(\alpha_\omega)_m\wedge\Omega\geq\int_X\beta\wedge\Omega
\end{eqnarray*}
for all forms $\Omega\in C^\infty_{n-m,\,n-m}(X,\,\R)$ satisfying the conditions $\partial\bar\partial\Omega = 0$ and $\Omega>0$ (weakly).

  We will reason by contradiction. Suppose this is not true. Then, there exist a sequence of constants $\delta_k\downarrow 0$ as $k\to\infty$ and a sequence of forms $\Omega_k\in C^\infty_{n-m,\,n-m}(X,\,\R)$ such that $\partial\bar\partial\Omega_k = 0$, $\Omega_k>0$ (weakly) and \begin{eqnarray}\label{eqn:contradiction-ineq_normalisation}(1-\delta_k)\,\int_X(\alpha_\omega)_m\wedge\Omega_k < \int_X\beta\wedge\Omega_k =1 \hspace{5ex}\mbox{for all}\hspace{2ex} k\in\N,\end{eqnarray} where the last equality is a normalisation of the forms $\Omega_k$ (which can be achieved by multiplying each of these forms by an appropriate positive constant).

  Since $\beta\geq C\,\omega_m$ (by hypothesis (ii)), the above normalisation of the weakly positive forms $\Omega_k$ implies their uniform boundedness in mass, hence the existence of a subsequence (still denoted by $(\Omega_k)_{k\in\N}$) converging in the weak topology of currents to some weakly positive current $\Omega_\infty$ of bidegree $(n-m,\,n-m)$: $\Omega_k\longrightarrow\Omega_\infty\geq 0$ (weakly) as $k\to\infty$. We get, after letting $k\to\infty$ in (\ref{eqn:contradiction-ineq_normalisation}): \begin{eqnarray}\label{eqn:ineq_leq_1}\int_X(\alpha_\omega)_m\wedge\Omega_\infty\leq 1.\end{eqnarray}

  $\bullet$ Meanwhile, thanks to the solvability assumption in the sense of Definition \ref{Def:existence_equation_initial-cond} on equation (\ref{eqn:equation}), for every $k\in\N$, there exists a constant $c_k>0$ and a form $u_k\in C^\infty_{m-1,\,m-1}(X,\,\R)$ such that \begin{eqnarray}\label{eqn:equation_u_k}\frac{1}{n!}\,\bigg[\star_\omega\bigg((\alpha + i\partial\bar\partial u_k)\wedge\omega_{n-m-1}\bigg)\bigg]^n = c_k\,\beta\wedge\Omega_k
  \end{eqnarray} and such that \begin{eqnarray}\label{eqn:equation_initial-conditions_re_u_k}\widetilde\alpha_k:=\alpha + i\partial\bar\partial u_k > 0 \hspace{1ex}\mbox{(strongly)} \hspace{3ex}  \mbox{and} \hspace{3ex} u_k\in\ker\partial^\star_\omega\cap\ker\bar\partial^\star_\omega  \hspace{3ex}  \mbox{and} \hspace{3ex} \Lambda_\omega^{m-2}(\Delta''_\omega u_k) = 0.\end{eqnarray} In terms of $\widetilde\alpha_k$, (\ref{eqn:equation_u_k}) can be reworded as \begin{eqnarray}\label{eqn:equation_u_k_tilde}\bigg((\widetilde\alpha_k)_\omega\bigg)_n = c_k\,\beta\wedge\Omega_k,  \hspace{6ex} k\in\N.\end{eqnarray}

    $\bullet$ We get the following inequalities: \begin{eqnarray}\label{eqn:ineq_C-S+M-A+c_k}\nonumber & & \bigg(\int\limits_X\bigg((\widetilde\alpha_k)_\omega\bigg)_m\wedge\Omega_k\bigg)\cdot\bigg(\int\limits_X\bigg((\widetilde\alpha_k)_\omega\bigg)_{n-m}\wedge\beta\bigg) \\
\nonumber    & \geq & \bigg(\int\limits_X\sqrt{\frac{((\widetilde\alpha_k)_\omega)_m\wedge\Omega_k}{\omega_n}\cdot\frac{((\widetilde\alpha_k)_\omega)_{n-m}\wedge\beta}{((\widetilde\alpha_k)_\omega)_n}}\cdot\sqrt{\frac{((\widetilde\alpha_k)_\omega)_n}{\omega_n}}\,\omega_n\bigg)^2 \geq \bigg(\int\limits_X\sqrt{\frac{\beta\wedge\Omega_k}{\omega_n}}\cdot\sqrt{c_k\,\frac{\beta\wedge\Omega_k}{\omega_n}}\,\omega_n\bigg)^2 \\
& = & c_k\,\bigg(\int\limits_X\beta\wedge\Omega_k\bigg)^2 = c_k,  \hspace{5ex} k\in\N,\end{eqnarray} where the first inequality is an application of the classical H\"older inequality, the second inequality is an application of the pointwise inequality (\ref{eqn:pointwise_ineq_form-products}) with $\rho:=(\widetilde\alpha_k)_\omega$ and $\Omega:=\Omega_k$ combined with equality (\ref{eqn:equation_u_k_tilde}) (the result of the solvability of equation (\ref{eqn:equation})), while the last equality follows from the normalisation (\ref{eqn:contradiction-ineq_normalisation}) of the forms $\Omega_k$.

    On the other hand, for every $k\in\N$ we have: \begin{eqnarray}\label{eqn:alpha-tilde-k_iddbar}\nonumber(\widetilde\alpha_k)_\omega & = & \star_\omega\bigg((\alpha + i\partial\bar\partial u_k)\wedge\omega_{n-m-1}\bigg) = \star_\omega\bigg(\alpha\wedge\omega_{n-m-1}\bigg) + Q_\omega(u_k) \\
      & = & \alpha_\omega - \frac{1}{(m-1)!}\,i\partial\bar\partial\Lambda_\omega^{m-1}u_k,\end{eqnarray} where the third equality follows from part $(2)$ of Corollary and Definition \ref{Cor-Def:P_omega_def-formula} thanks to the initial conditions (\ref{eqn:equation_initial-conditions_re_u_k}) satisfied by $u_k$. Indeed, $\Delta''_\omega$ commutes with $L_\omega:=\omega\wedge\cdot\,$ thanks to the metric $\omega$ being K\"ahler, hence their adjoints, $\Delta''_\omega$ and $\Lambda_\omega$, commute. Thus, $\Delta''_\omega\Lambda_\omega^{m-2} u_k = \Lambda_\omega^{m-2}\Delta''_\omega u_k =0$, which implies $0 = \Lambda_\omega^{m-1}\Delta''_\omega u_k = \Delta''_\omega\Lambda_\omega^{m-1} u_k$.

    We conclude from (\ref{eqn:alpha-tilde-k_iddbar}), from $d\alpha_\omega = 0$ (which is a consequence of the hypothesis $\Delta''_\omega\alpha_\omega = 0$, that is, in turn, equivalent to $\Delta_\omega\alpha_\omega = 0$ since $\omega$ is K\"ahler and is again equivalent to $\alpha_\omega\in\ker d\cap\ker d_\omega^\star$), from $\partial\bar\partial\Omega_k = 0$ and from the Stokes theorem that $u_k$ disappears from the integrals featuring on the first and last lines of (\ref{eqn:ineq_C-S+M-A+c_k}) and in the definition of $c_k$. In other words, we have: \begin{eqnarray*}\int\limits_X\bigg((\widetilde\alpha_k)_\omega\bigg)_m\wedge\Omega_k =\int\limits_X(\alpha_\omega)_m\wedge\Omega_k, \hspace{5ex}  \int\limits_X\bigg((\widetilde\alpha_k)_\omega\bigg)_{n-m}\wedge\beta = \int\limits_X(\alpha_\omega)_{n-m}\wedge\beta \end{eqnarray*} and $\displaystyle c_k = \int\limits_X \bigg((\widetilde\alpha_k)_\omega\bigg)_n = \int\limits_X(\alpha_\omega)_n$.

    Thus, (\ref{eqn:ineq_C-S+M-A+c_k}) reduces to the inequality: \begin{eqnarray*}\bigg(\int\limits_X(\alpha_\omega)_m\wedge\Omega_k\bigg)\cdot\bigg(\int\limits_X(\alpha_\omega)_{n-m}\wedge\beta\bigg) \geq \int\limits_X(\alpha_\omega)_n,  \hspace{5ex} k\in\N.\end{eqnarray*} Note that the second and third quantities above are independent of $k$, while for the first we saw in (\ref{eqn:contradiction-ineq_normalisation}) and (\ref{eqn:ineq_leq_1}) that we have: $\lim\limits_{k\to\infty}\int_X(\alpha_\omega)_m\wedge\Omega_k = \int_X(\alpha_\omega)_m\wedge\Omega_\infty\leq 1$. Therefore, letting $k\to\infty$ in the above inequality, we get:  \begin{eqnarray*}\int\limits_X(\alpha_\omega)_{n-m}\wedge\beta \geq \int\limits_X(\alpha_\omega)_n.\end{eqnarray*} This contradicts hypothesis (iii) and we are done. \hfill $\Box$

\vspace{2ex}

When $m=n-1$, Theorem \ref{The:current_existence} can be specified further since the linear map \begin{eqnarray*}\omega_{n-2}\wedge\cdot\,:\Lambda^{1,\,1}T^\star X\longrightarrow\Lambda^{n-1,\,n-1}T^\star X\end{eqnarray*} is bijective at every point of $X$.

  When $d\beta=0$, the $(n-1,\,n-1)$-current $T$ given by the conclusion is $d$-closed, hence also $\bar\partial$-closed, so it defines a Dolbeault cohomology class $[T]_{\bar\partial} = [(\alpha_\omega)_{n-1} - \beta]_{\bar\partial}\in H^{n-1,\,n-1}_{\bar\partial}(X,\,\R)$. Let $\zeta\in C^\infty_{n-1,\,n-1}(X,\,\R)$ be the $\Delta''_\omega$-harmonic representative of this class. By the bijectivity of the above pointwise map, there is a unique form $\eta\in C^\infty_{1,\,1}(X,\,\R)$ such that $\zeta = \omega_{n-2}\wedge\eta$. Taking $\Delta''_\omega$, we get: \begin{eqnarray*}0=\Delta''_\omega\zeta = \omega_{n-2}\wedge\Delta''_\omega\eta,\end{eqnarray*} the last equality being a consequence of the commutation of $\Delta''_\omega$ with $\omega_{n-2}\wedge\cdot$ due to $\omega$ being K\"ahler. The bijectivity of the above pointwise map implies $\Delta''_\omega\eta = 0$. (This is equivalent to $\Delta_\omega\eta = 0$, since $\omega$ is K\"ahler, which implies that $d\eta = 0$, so $\eta$ represents a Bott-Chern cohomology class of bidegree $(1,\,1)$.) In particular, $\eta$ represents a Dolbeault cohomology class $[\eta]_{\bar\partial}\in H^{1,\,1}_{\bar\partial}(X,\,\R)$ and this class is the inverse image of \begin{eqnarray*}[\omega_{n-2}\wedge\eta]_{\bar\partial} = [\zeta]_{\bar\partial} = [T]_{\bar\partial} = [(\alpha_\omega)_{n-1} - \beta]_{\bar\partial}\in H^{n-1,\,n-1}_{\bar\partial}(X,\,\R)\end{eqnarray*} under the Hard Lefschetz isomorphism \begin{eqnarray*}H^{1,\,1}_{\bar\partial}(X,\,\R)\ni[a]_{\bar\partial}\longmapsto[\omega_{n-2}\wedge a]_{\bar\partial}\in H^{n-1,\,n-1}_{\bar\partial}(X,\,\R).\end{eqnarray*}

  Since, by the $\partial\bar\partial$-lemma, $T = \omega_{n-2}\wedge\eta + i\partial\bar\partial\chi$ for some real current $\chi$ of bidegree $(n-2,\,n-2)$, conclusion (a) of Theorem \ref{The:current_existence} reads: \begin{eqnarray*}T = \omega_{n-2}\wedge\eta + i\partial\bar\partial\chi\geq \delta\,(\alpha_\omega)_{n-1}\end{eqnarray*} for some constant $\delta>0$, where the $(1,\,1)$-form $\alpha_\omega:=\star_\omega\alpha$ is positive definite at every point of $X$.

  This proves, in the language of Definition \ref{Def:m-psef}, that the class $[\eta]_{BC}\in H^{1,\,1}_{BC}(X,\,\R)$ is $[\omega]$-$(n-1)$-{\bf big}.

\vspace{2ex}

  We have thus proved Corollary \ref{Cor:n-1_current_existence_big_introd} stated in the introduction.

\vspace{3ex}

\noindent {\bf References.} \\

%\noindent [Ber19]\, R. Berman --- {\it From Monge-Amp\`ere equations to envelopes and geodesic rays in the zero temperature limit} --- Math. Zeit. {\bf 291}, 1-2 (2019), 365-394.

%\vspace{1ex}

\noindent [BDPP13]\, S. Boucksom, J.-P. Demailly, M. Paun, T. Peternell --- {\it The Pseudo-effective Cone of a Compact K\"ahler Manifold and Varieties of Negative Kodaira Dimension} --- J. Alg. Geom. {\bf 22} (2013) 201-248.

%\vspace{1ex}

%\noindent [ChX25]\, J. Cheng, Y. Xu --- {\it Viscosity solution to complex Hessian equations on compact Hermitian manifolds} --- J. Funct. Anal. {\bf 289}, 5 (2025), art. 110936.

%\vspace{1ex}

%\noindent [CP22]\, {T. C. Collins, S. Picard, --- {\it The Dirichlet problem for the $k$-Hessian equation on a complex manifold} --- Amer. J. Math. {\bf 144} (2022), 1641-1680.}

%\vspace{1ex}

%\noindent [Dem 84]\, J.-P. Demailly --- {\it Sur l'identit\'e de Bochner-Kodaira-Nakano en g\'eom\'etrie hermitienne} --- S\'eminaire d'analyse P. Lelong, P. Dolbeault, H. Skoda (editors) 1983/1984, Lecture Notes in Math., no. {\bf 1198}, Springer Verlag (1986), 88-97.

\vspace{1ex}

\noindent [Dem92]\, J.-P. Demailly --- {\it Regularization of Closed Positive Currents and Intersection Theory} --- J. Alg. Geom., {\bf 1} (1992), 361-409.

%\vspace{1ex}

%\noindent [Dem97]\, J.-P. Demailly --- {\it Complex Analytic and Algebraic Geometry} --- \url{https://www-fourier.ujf-grenoble.fr/~demailly/manuscripts/agbook.pdf}

%\vspace{1ex}

%\noindent [DPS94]\, J.-P. Demailly, T. Peternell, M. Schneider --- {\it Compact complex manifolds with numerically effective tangent bundles,} ---  J. Algebraic Geom. {\bf3} (1994) 295-345.

\vspace{1ex}

\noindent [Die06]\, N. Q. Dieu --- {\it $q$-Plurisubharmonicity and $q$-Pseudoconvexity in $\C^n$} --- Publ. Mat., Barc. {\bf 50} (2006), no. 2, p. 349-369.

\vspace{1ex}

\noindent [Din22]\, S. Dinew --- {\it $m$-subharmonic and $m$-plurisubharmpnic functions: on two problems of Sadullaev} --- Ann. Fac. Sci. Toulouse Math. (6) {\bf 31} (2022), no. 3, 995-1009.

\vspace{1ex}

\noindent [DP25]\, S. Dinew, D. Popovici --- {\it  $m$-Positivity and Regularisation} --- arXiv:2510.25639v1 [math.DG]

%\vspace{1ex}

%\noindent[Do23]\, {W. Dong --- {\it The Dirichlet problem for Monge-Amp\`ere type equations on Hermitian manifolds} --- Discr. Cont. Dynam. Syst. {\bf 43} (2023), no. 11, 3925-3939.}

%\vspace{1ex}

%\noindent[GGQ22]\, {M. George, B. Guan, C. Qiu --- {\it Fully nonlinear Elliptic Equations on Hermitian manifolds for Symmetric Functions of Partial Laplacians} --- J. Geom. Anal. {\bf 32} (2022) Paper no. 183.}

%\vspace{1ex}

%\noindent[GN18]\, {D. Gu, N. C. Nguyen --- {\it The Dirichlet problem for a complex Hessian equation on compact Hermitian manifolds with boundary}--- Ann. Sc. Norm. Super. Pisa Cl. Sci. {\bf 18} (2018), no. 4, 1189-1248.}

\vspace{1ex}

\noindent [HL13]\, F.R. Harvey, H.B. Lawson --- {\it p-Convexity, p-Plurisubharmonicity and the Levi Problem} --- Indiana Univ. Math. J. {\bf62} (2013), no. 1, 149-169.

%\vspace{1ex}

%\noindent [HL13a]\, F.R. Harvey, H. B. Lawson --- {\it
%The equivalence of viscosity and distributional subsolutions for convex subequations???a strong Bellman principle}---Bull. Braz. Math. Soc. (N.S.) {\bf44} (2013), no. 4, 621-652.

%\vspace{1ex}

%\noindent [HLP16]\, F.R. Harvey, H.B. Lawson, Sz. Pli\'s --- {\it Smooth approximation of plurisubharmonic functions on almost complex manifolds} --- Math. Ann. {\bf 366}, 3 (2016), 929-940.

\vspace{1ex}

\noindent [Lam99]\, A. Lamari --- {\it Courants k\"ahl\'eriens et surfaces compactes} --- Ann. Inst. Fourier, Grenoble, {\bf 49}, 1 (1999), 263-285.

%\vspace{1ex}

%\noindent [LN15]\, H. C. Lu, V. D. Nguyen --- {Degenerate complex Hessian equations on compact K\"ahler manifolds} --- Indiana Univ. Math. J. {\bf 64}, 6 (2015),  1721-1745.

%\vspace{1ex}

%\noindent [Pli13]\, Sz. Pli\'s --- {\it The smoothing of $m$-subharmonic functions} --- preprint arXiv 1312.1906v2.

\vspace{1ex}

\noindent [Pop15]\, D. Popovici --- {\it Aeppli Cohomology Classes Associated with Gauduchon Metrics on Compact Complex Manifolds} --- Bull. Soc. Math. France {\bf 143} (3), (2015), p. 1-37.

\vspace{1ex}

\noindent [Pop16]\, D. Popovici --- {\it Sufficient Bigness Criterion for Differences of Two Nef Classes} --- Math. Ann. {\bf 364} (2016), 649-655.

\vspace{1ex}

\noindent [Ver10]\, M. Verbitsky --- {\it Plurisubharmonic Functions in Calibrated Geometry and q-Convexity} --- Math. Z. {\bf 264} (2010), no. 4, p. 939-957.

\vspace{1ex}

\noindent [Voi02]\, C. Voisin --- {\it Hodge Theory and Complex Algebraic Geometry. I.} --- Cambridge Studies in Advanced Mathematics, 76, Cambridge University Press, Cambridge, 2002.

%\vspace{1ex}

%\noindent [Yau78]\, S.T. Yau --- {\it On the Ricci Curvature of a Complex K\"ahler Manifold and the Complex Monge-Amp\`ere Equation I} --- Comm. Pure Appl. Math. {\bf 31} (1978) 339-411.

\vspace{6ex}

\noindent Department of Mathematics and Computer Science     \hfill Institut de Math\'ematiques de Toulouse,

\noindent Jagiellonian University     \hfill  Universit\'e de Toulouse,

\noindent 30-409 Krak\'ow, Ul. Lojasiewicza 6, Poland    \hfill  118 route de Narbonne, 31062 Toulouse, France

\noindent  Email: Slawomir.Dinew@im.uj.edu.pl                \hfill     Email: popovici@math.univ-toulouse.fr

\end{document}